\documentclass[a4paper,11pt]{amsart}
\usepackage{amsmath}
\usepackage{graphicx}
\usepackage{epstopdf}
\usepackage{amssymb,amscd,array}
\usepackage{color}
\usepackage{float}

\textheight 21cm \oddsidemargin 1.5cm
 \evensidemargin 1.5cm
\topskip     0.5cm \footskip    0.5cm \headheight   0.5cm \topmargin
0.25cm

\setlength\paperheight {297mm} \setlength\paperwidth {210mm}


\makeindex \makeatletter
\def\captionof#1#2{{\def\@captype{#1}#2}}
\makeatother

\newcounter{tablegroup}
\newcounter{subtable}[tablegroup]

\setcounter{secnumdepth}{4} \setcounter{tocdepth}{4}

\vfuzz2pt 
\hfuzz2pt 
\newtheorem{thm}{Theorem}[section]
\newtheorem{cor}[thm]{Corollary}
\newtheorem{lem}[thm]{Lemma}
\newtheorem{prop}[thm]{Proposition}
\newtheorem{defn}[thm]{Definition}
\newtheorem{rem}[thm]{\bf Remark}
\newtheorem{exe}[thm]{\bf Example}
\numberwithin{equation}{section}


\begin{document}
\title[Nonwandering sets and Special $\alpha$-limit sets]
{Nonwandering sets and Special $\alpha$-limit sets \\ of Monotone maps on Regular Curves}

\author{Aymen Daghar and Habib Marzougui}
\address{Aymen Daghar, \ University of Carthage, Faculty
	of Science of Bizerte, (UR17ES21), ``Dynamical systems and their applications'', Jarzouna, 7021, Bizerte, Tunisia.}
\email{aymendaghar@gmail.com}
\address{ Habib Marzougui, University of Carthage, Faculty
	of Science of Bizerte, (UR17ES21), ``Dynamical systems and their applications'', 7021, Jarzouna, Bizerte, Tunisia}
\email{habib.marzougui@fsb.rnu.tn}

\subjclass[2010]{ 37B20, 37B45, 54H20}
\keywords{ Minimal set, regular curve, $\omega$-limit set, local dendrite, monotone map}
	\begin{abstract}
	
		Let $X$ be a regular curve and let $f: X\to X$ be a monotone map. In this paper, nonwandering set of $f$ and the structure of special $\alpha$-limit sets
		for $f$ are investigated. We show that AP$(f)= \textrm{R}(f) =\Omega(f)$, where AP$(f)$, $\textrm{R}(f)$ and $\Omega(f)$ are the sets of almost periodic points, recurrent points and nonwandering of $f$, respectively. This result extends that of Naghmouchi established, whenever $f$ is a homeomorphism on a regular curve [J. Difference Equ. Appl., 23 (2017), 1485--1490] and [Colloquium Math., 162 (2020), 263--277], and that of Abdelli and Abdelli, Abouda and Marzougui, whenever $f$ is a monotone map on a local dendrite [Chaos, Solitons Fractals, 71 (2015), 66--72] and [Topology Appl., 250 (2018), 61--73], respectively. On the other hand, we show that for every $X\setminus \textrm{P}(f)$, the special $\alpha$-limit set $s\alpha_{f}(x)$ is a minimal set, where P$(f)$ is the set of periodic points of $f$ and that $s\alpha_{f}(x)$ is always closed, for every $x\in X$. In addition, we prove that $\textrm{SA}(f) = \textrm{R}(f)$, where  $\textrm{SA}(f)$ denotes the union of all special $\alpha$-limit sets of $f$; these results extend, for monotone case, recent results on interval and graph maps obtained respectively by Hant\'{a}kov\'{a} and Roth in [Preprint: arXiv 2007.10883.] and Fory\'{s}-Krawiec,  Hant\'{a}kov\'{a} and Oprocha in [Preprint: arXiv:2106.05539.]. Further results related to the continuity of the limit maps are also obtained, we prove that the map $\omega_{f}$ (resp. $\alpha_{f}$, resp. s$\alpha_{f}$) is continuous on $X\setminus \textrm{P}(f)$ (resp. $X_{\infty}\setminus \textrm{P}(f)$). 
	\end{abstract}
\maketitle

\section{\bf Introduction}
In the last two decades, a wide literature on the dynamical properties of maps on some one-dimensional continua has developed.  Examples of continua become increasingly studied include, graphs, dendrites and local dendrites (see for instance \cite{a}, \cite{HM}, \cite{aam}, \cite{Nag3}).

Recently a large class of continua called \textit{regular curves} has given a special attention (see for example, \cite{an}, \cite{Ay}, \cite{Ay3}, \cite{MONOTONE}, \cite{ka}, \cite{ka2}, \cite{n}, \cite{n2}, \cite{Seidler}).
These form a large class of continua which includes local dendrites.
The Sierpi\'{n}ski triangle is a well known example of a regular curve which is not a local dendrite.  Regular curves appear in continuum theory and also in other branches of Mathematics such as complex dynamics; for instance, the Sierpi\'{n}ski triangle can be realized as the Julia set of the complex polynomial $p(z)=z^2+2$ (see \cite{Bldevan}).
 Recall that the Julia set of $p$ is $J(p) = \{z\in \mathbb{C}: (p^n(z))_{n\geq 1} \textrm{ is bounded} \}$.
Seidler \cite{Seidler} proved that every homeomorphism of a regular curve has zero topological entropy (later, this result was extended by Kato in \cite{ka} to monotone maps).
In \cite{n}, \cite{n2}, Naghmouchi proved that any $\omega$-limit set (resp. $\alpha$-limit set) of a homeomorphism $f$ on a regular curve is a minimal set. Moreover he established the equality between the set of nonwandering points and the set of almost periodic points.
~ In \cite{Ay}, the first author gave a full characterization of minimal sets for homeomorphisms without periodic points on regular curves. In \cite{MONOTONE2} it was shown that the set of periodic
points is either empty or dense in the set of non-wandering points, for homeomorphisms on regular curves.
In the present paper, we deal with several questions/problems.

First, we address the question of the equality between the set of nonwandering points and the set of almost periodic points.
This was proved in two cases:

$-$ for homeomorphisms on regular curves (Naghmouchi \cite{n2})

$-$ for monotone maps on local dendrites (Abdelli et al. \cite{aam})

In Theorem \ref{t41}, we prove a more general result by showing that this is true for monotone maps on regular curves.
Nevertheless, the later result is false in general for monotone maps on dendroids as it is shown in Example \ref{CEM2}. Moreover, we show in Theorem \ref{RR} that the set of nonwandering points coincides with the set of points belonging to their special $\alpha$-limit sets (Proposition \ref{RRS}).\\

Second, beside the usual limits sets $\omega$-limit and $\alpha$-limit, we are interested in the study of another kind of limit sets, called \textit{special $\alpha$-limit set} (see Section 6). We ask the question whether every special $\alpha$-limit set is a minimal set? We show that for a monotone map $f$ on a regular curve, every special $\alpha$-limit set $s\alpha_{f}(x)$ of a non periodic point $x$ is a minimal set. However, we built an example of a monotone map $f$ on an infinite star for which $s\alpha_{f}(x)$ is not a minimal set for some periodic point $x$. Moreover we show that the inclusion $s\alpha_{f}(x)\subset \alpha_{f}(x)$ is strict. In addition, we prove that $\textrm{SA}(f) = \textrm{R}(f)$, where $\textrm{SA}(f)$ respectively $\textrm{R}(f)$ denotes the union of all special $\alpha$-limit sets and the set of recurrent points of $f$. Notice that it is shown recently that, for mixing graph maps $f: G\to G$, every special $s\alpha$-limit set is the $\omega$-limit set of some point from $G$ and moreover, for graph maps $f: G\to G$ with zero
topological entropy, every special $s\alpha$-limit set is a minimal set (cf. \cite[Theorem 3.8]{kho}). On the other hand, Kolyada et al. showed in (\cite[Theorem 3.3 and Corollary 3.11]{kms}),  that $s\alpha_{f}(x)$ is closed whenever $f$ is either an interval map for which the set of all periodic points is closed or $f$ is transitive. Recently, Hant\'{a}kov\'{a} and Roth \cite{hr} showed that a special $\alpha$-limit set is closed for a piecewise monotone interval map. It is previously known that  $s\alpha_{f}(x)$ is closed for homeomorphisms on any compact metric space.
In Corollary \ref{c53}, we extend the later result to monotone maps on regular curves by showing that for any $x\in X$, $s\alpha_{f}(x)$ is closed. In fact we show more precisely that for every $x\in X_{\infty}$, $\alpha_{f}(x)\cap \Omega(f) = s\alpha_{f}(x)$ (cf. Theorem \ref{S=OSS}).

\section{\bf Preliminaries}

Let $X$ be a compact metric space with metric $d$ and let $f: X\to X$ be a continuous map. The pair $(X,f)$ is called a dynamical system. Let $\mathbb{Z},\ \mathbb{Z}_{+}$ and $\mathbb{N}$ be the sets of integers, non-negative integers and positive integers, respectively. For $n\in \mathbb{Z_{+}}$, denote by  $f^{n}$ the $n$-$\textrm{th}$ iterate of $f$; that is, $f^{0}=\textrm{identity}$ and $f^{n}=f\circ f^{n-1}$ if $n\in \mathbb{N}$. For any $x\in X$, the subset
$\textrm{Orb}_{f}(x) = \{f^{n}(x): n\in\mathbb{Z}_{+}\}$ is called the \textit{orbit} of $x$ (under $f$). A subset $A\subset X$ is called \textit{$f-$invariant} (resp. strongly $f-$invariant) if $f(A)\subset A$ (resp., $f(A)=A$); it is further called a \textit{minimal set} (under $f$) if it is closed, non-empty and does not contain any $f-$invariant, closed proper non-empty subset of $X$. We define the \textit{$\omega$-limit} set of a point $x\in X$ to be the set:
\begin{align*}
\omega_{f}(x) & = \{y\in X: \liminf_{n\to +\infty} d(f^{n}(x), y) = 0\}\\
& = \underset{n\in \mathbb{N}}\cap\overline{\{f^{k}(x): k\geq n\}}.
\end{align*}

 The set $\alpha_{f}(x)= \displaystyle\bigcap_{k\geq 0}\displaystyle\overline{\bigcup_{n\geq k}f^{-n}(x)}$ is called the $\alpha$-limit set of $x$. Equivalently a point $y\in \alpha_{f}(x)$ if and only if there exist an increasing sequence of positive integers $(n_{k})_{k\in \mathbb{N}}$ and a sequence of points  $(x_{k})_{k\geq 0}$ such that
$f^{n_{k}}(x_{k})=x$ and $\displaystyle\lim_{k\rightarrow +\infty}x_{k}=y.$  It is much harder to deal with $\alpha$-limit sets since there are many choices for points in a backward orbit. When $f$ is a homeomorphism, $\alpha_{f}(x)= \omega_{f^{-1}}(x)$.
\medskip

Balibrea et al. \cite{bgl} considered exactly one branch of the backward orbit as follows.

\begin{defn}
	Let $x\in X$. A \textit{negative orbit} of $x$ is a sequence $(x_n)_n$ of points in $X$ such that $x_0=x$ and $f(x_{n+1}) = x_n$, for every $n\geq 0$. The $\alpha$-limit set of $(x_n)_{n\geq 0}$ denoted by $\alpha_f((x_n)_n)$ is the set of all limit points of $(x_n)_{n\geq 0}$.
\end{defn}

It is clear that $\alpha_f((x_n)_n)\subset \alpha_{f}(x)$. The inclusion can be strict; even for onto monotone maps on regular curves (see Example \ref{e616}). Notice that we have the following equivalence:\\

(i) $\alpha_{f}(x)\neq \emptyset$, (ii) $x\in X_{\infty}$.
In particular, if $f$ is onto, then $\alpha_{f}(x)\neq \emptyset$ for every $x\in X$.

\begin{prop}$($\cite[Corollary 2.2]{MONOTONE}$)$\label{p43}
	Let $f: X\longrightarrow X$ be a continuous self mapping of a compact space $X$. Let $x\in X_{\infty}$. Then:
	\begin{itemize} 	
		\item[(i)] $\alpha_{f}(x)$ is non-empty, closed and $f$-invariant. In addition, it is strongly $f$-invariant whenever $\displaystyle\lim_{n\to +\infty}\mathrm{diam}(f^{-n}(x)) =0$.
		\item[(ii)] $\alpha_f((x_n)_{n\geq 0})$ is non-empty, closed and strongly $f$-invariant, for any negative orbit $(x_{n})_{n\geq 0}$ of $x$.
	\end{itemize}
\end{prop}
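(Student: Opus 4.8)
The plan is to handle the two items in turn, extracting everything from compactness of $X$, continuity of $f$, and the two descriptions of $\alpha$-limit sets recalled above.

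For item (i), non-emptiness follows by observing that $x\in X_{\infty}$ forces $f^{-n}(x)\neq\emptyset$ for every $n$, so the sets $K_{k}:=\overline{\bigcup_{n\geq k}f^{-n}(x)}$ form a decreasing sequence of non-empty closed subsets of the compact space $X$; the finite intersection property then gives $\alpha_{f}(x)=\bigcap_{k\geq 0}K_{k}\neq\emptyset$, and closedness is automatic. For $f$-invariance I would use the sequential characterization: given $y\in\alpha_{f}(x)$, pick $n_{k}\uparrow\infty$ and $x_{k}\to y$ with $f^{n_{k}}(x_{k})=x$; then $f^{\,n_{k}-1}(f(x_{k}))=x$ with $f(x_{k})\to f(y)$ and $n_{k}-1\uparrow\infty$, so $f(y)\in\alpha_{f}(x)$.

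The one genuinely delicate point is the strong invariance under the extra hypothesis $\mathrm{diam}(f^{-n}(x))\to 0$, that is, the inclusion $\alpha_{f}(x)\subset f(\alpha_{f}(x))$. The obstacle is that a limit point $y$ of the backward orbit need not itself lie in any $f^{-m}(x)$, so one cannot simply take a preimage of $y$. I would get around this as follows: choose $m_{k}\uparrow\infty$ and $y_{k}\in f^{-m_{k}}(x)$ with $y_{k}\to y$; using $x\in X_{\infty}$ choose also $u_{k}\in f^{-(m_{k}+1)}(x)$, so that $f(u_{k})\in f^{-m_{k}}(x)$ and hence $d(f(u_{k}),y_{k})\leq\mathrm{diam}(f^{-m_{k}}(x))\to 0$, giving $f(u_{k})\to y$. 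A convergent subsequence $u_{k_{j}}\to z$ then satisfies $f(z)=y$ by continuity, while $f^{\,m_{k_{j}}+1}(u_{k_{j}})=x$ with $m_{k_{j}}+1\uparrow\infty$ shows $z\in\alpha_{f}(x)$; together with the $f$-invariance already established this yields $\alpha_{f}(x)=f(\alpha_{f}(x))$.

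For item (ii), write $A:=\alpha_{f}((x_{n})_{n})$. Since $(x_{n})_{n}$ lies in the compact space $X$ it has a cluster point, so $A\neq\emptyset$; and $A=\bigcap_{N}\overline{\{x_{n}:n\geq N\}}$ is closed. For invariance: if $y\in A$, take $n_{k}\uparrow\infty$ with $x_{n_{k}}\to y$; then $x_{n_{k}-1}=f(x_{n_{k}})\to f(y)$, so $f(y)\in A$. Conversely, extracting a convergent subsequence $x_{n_{k_{j}}+1}\to z$ gives $f(z)=\lim_{j}x_{n_{k_{j}}}=y$ and $z\in A$ since $n_{k_{j}}+1\uparrow\infty$; hence $A\subset f(A)$, and therefore $A=f(A)$. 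Apart from the diameter trick in (i), every step is a routine compactness-and-continuity argument, so I expect the write-up to be short.
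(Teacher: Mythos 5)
Your argument is correct and complete: the finite-intersection-property argument gives non-emptiness and closedness, the sequential characterization gives $f(\alpha_f(x))\subset\alpha_f(x)$, and the diameter hypothesis is used exactly where it is needed, namely to replace an (unavailable) preimage of the limit point $y$ by preimages $u_k\in f^{-(m_k+1)}(x)$ whose images are forced close to $y_k$; the treatment of $\alpha_f((x_n)_n)$ is the standard compactness-and-continuity argument. Note that the paper itself gives no proof of this proposition --- it is quoted from \cite[Corollary 2.2]{MONOTONE} --- so there is no internal proof to compare with; your write-up is the natural self-contained argument one would expect to find in that reference.
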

\medskip

A point $x\in X$ is called:

$-$ \textit{Periodic} of period $n\in\mathbb{N}$ if
$f^{n}(x)=x$ and $f^{i}(x)\neq x$ for $1\leq i\leq n-1$; if $n = 1$, $x$ is called a \textit{fixed point} of $f$ i.e.
$f(x) = x$;


$-$ \textit{Almost periodic} if for any neighborhood $U$ of $x$ there is $N\in\mathbb{N}$ such that
$\{f^{i+k}(x): 0\leq i\leq N\}\cap U\neq \emptyset$, for all $k\in \mathbb{N}$.

$-$\textit{Recurrent} if $x\in \omega_{f}(x)$.


$-$ \textit{Nonwandering } if for any neighborhood $U$ of $x$ there is $n\in \mathbb{N}$ such that $f^n(U)\cap U\neq\emptyset$.

We denote by P$(f)$, AP$(f)$, R$(f)$, $\Omega(f)$ and $\Lambda(f)$ the sets of periodic points, almost periodic points, recurrent points, nonwandering points and the union of all $\omega$-limit sets of $f$, respectively. Define the space

$X_{\infty} = \displaystyle\bigcap_{n\in \mathbb{N}}f^{n}(X)$. From the definition, we have the following inclusions:
$$\textrm{P}(f)\subseteq \textrm{AP}(f)\subseteq \textrm{R}(f)\subseteq \Lambda(f)\subseteq \Omega(f)\subseteq X_{\infty}.$$

\medskip

In the definitions below, we use the terminology from Nadler \cite{Nadler} and Kuratowski \cite{Kur}.

\begin{defn}$($\cite[ page 131]{Kur}$)$\label{def} \textit{Let $X,~ Y$ be two topological spaces.
		A continuous map $f: X\longrightarrow Y$ is said to be \textit{monotone} if for any connected subset $C$ of $Y$, $f^{-1}(C)$ is connected.}
\end{defn}

When $f$ is closed and onto, Definition \ref{def} is equivalent to that the preimage of any point by $f$ is connected (cf. \cite[ page 131]{Kur}). In particular, this holds if $X$ is compact and $f$ is onto. Notice that $f^{n}$ is monotone for every $n\in \mathbb{N}$ when $f$ itself is monotone.
\medskip

A \emph{continuum} is a compact connected metric space.
An \emph{arc} $I$ (resp. a \emph{circle}) is any space homeomorphic to the compact interval $[0, 1]$
(resp. to the unit circle $\mathbb{S}^{1} =\{z\in \mathbb{C}: \ \vert z\vert = 1\}$).
A space is called \textit{degenerate} if it is a single point, otherwise; it is \textit{non-degenerate}. By a \textit{graph} $G$, we mean a continuum which can be written as the union of finitely many arcs such that any two of them are either disjoint or intersect only in one or both of their endpoints.

A \textit{dendrite} is a locally connected continuum which contains no circle.
Every subcontinuum of a dendrite is a dendrite (\cite[Theorem 10.10]{Nadler}) and every connected subset of $D$ is arcwise connected. A \textit{local dendrite} is a continuum every point of which has a dendrite neighborhood. By (\cite[Theorem 4, page 303]{Kur}), a local dendrite is a locally connected continuum containing only a finite number of circles. As a consequence every subcontinuum of a local
dendrite is a local dendrite (\cite[Theorems 1 and 4, page 303]{Kur}). Every graph and every dendrite is a local dendrite.

A \textit{regular curve} is a continuum $X$ with the property that for each point $x\in X$ and each open neighborhood $V$ of $x$ in $X$, there exists an open neighborhood $U$ of $x$ in $V$
such that the boundary set $\partial U$ of $U$ is finite.
Each regular curve is a $1$-dimensional locally connected continuum.
It follows that each regular curve is locally arcwise connected (see \cite{Kur} and \cite{Nadler}, for more details). In particular every local dendrite is a regular curve (cf.
\cite[page 303]{Kur}).

A continuum $X$ is said to be \textit{finitely suslinean continuum} provided that each infinite family of pairwise disjoint continua is null (i.e. for any disjoint open sets $U, V$, only a finite number of elements of the family meet both $U$ and $V$). Note that each regular curve is finitely suslinean (see  \cite{LFS}). A continuum is called \textit{hereditarily locally connected continuum}, written hlc, provided that every subcontinuum of $X$ is locally connected.
 In particular, finitely suslinean continua and (hence regular curves) are hlc (see \cite{Kur}). A continuum $X$ is said to be \textit{rational curve} provided that each point $x$ of $X$ and each open neighborhood $V$ of $x$ in $X$, there exists an open neighborhood $U$ of $x$ in $V$
 such that the boundary set $\partial U$ of $U$ is at most countable. Clearly, regular curve are rational.
\medskip

Let $X$ be a compact metric space. We denote by $2^X$ (resp. $C(X)$) the set of all non-empty compact subsets (resp. compact connected subsets) of $X$. The Hausdorff metric $d_H$ on $2^{X}$ (respectively $C(X)$) is defined as follows: $d_H (A,B) = \max \Big(\sup_{a\in A} d(a,B),~\sup_{b\in B} d(b,A)\Big)$, where $A, B \in 2^X$ (resp. $C(X)$). For $x\in X$ and $M\in 2^X$, $d(x,M) = \inf_{y \in M} d(x,y)$. With this distance, $(C(X), d_H)$ and  $(2^X, d_H)$ are compact metric spaces. Moreover if $X$ is a continuum, then so are $2^{X}$ and $C(X)$ (see \cite{Nadler}, for more details). Let $f: X\to X$ be a continuous map of $X$. We denote by $2^{f}:2^{X}\to 2^{X}, A\to f(A)$, called the induced map. Then $2^{f}$ is also a continuous self mapping of $(2^{X},d_{H})$ (cf. \cite{Nadler}). For a subset $A$ of $X$, we denote by diam$(A) = \sup_{x,y\in A} d(x,y)$ and $\textrm{card}(A)$ the cardinality of $A$. For any $A\in C(X)$, we denote by
$$\textrm{Mesh}(A) = \displaystyle\sup \{\textrm{diam}(C): \ C \textrm{ is a connected component of } A\}.$$ 

A family $(A_{i})_{i\in I}$ of subsets of $X$ is called a \textit{null family} if for any infinite sequence $(i_{n})_{n\geq 0}$ of $I,\; \displaystyle\lim_{n\to +\infty}\textrm{diam}(A_{i_{n}})=0$.
\medskip
It is well known that 
each pairwise disjoint family of subcontinua of a regular curve is null (see \cite{LFS}).
\medskip

We recall some results which are needed for the sequel.

\begin{prop} \label{Fs}
	Let $X$ be a regular curve. Then for any $\varepsilon>0$ and for any family of pairwise disjoint subcontinua $(A_{i})_{i\in I}$ of $X$, the set \\
	$\{i\in I: \mathrm{diam}(A_{i})\geq \varepsilon\}$ is finite. In particular if $(A_{n})_{n\geq 0}$ is a sequence of pairwise disjoint continua, then $(A_{n})_{n\geq 0}$ is a null family.
\end{prop}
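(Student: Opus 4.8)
The plan is to establish the first (quantitative) assertion by contradiction and then read off the ``null family'' statement from the definition. So I would start by assuming there is some $\varepsilon>0$ and a pairwise disjoint family $(A_i)_{i\in I}$ of subcontinua of $X$ with $\{i\in I:\ \mathrm{diam}(A_i)\geq\varepsilon\}$ infinite; after discarding indices and relabelling I may assume $I=\mathbb N$ and $\mathrm{diam}(A_n)\geq\varepsilon$ for all $n$. Using compactness of each $A_n$ I pick $a_n,b_n\in A_n$ realising $d(a_n,b_n)=\mathrm{diam}(A_n)\geq\varepsilon$, and by compactness of $X$ I pass to a subsequence along which $a_n\to a$ for some $a\in X$.

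The next step is to bring in the defining property of a regular curve: applied to the neighbourhood $V=B(a,\varepsilon/3)$ of $a$, it yields an open set $U$ with $a\in U\subseteq V$ and $\partial U$ \emph{finite}; in particular $\mathrm{diam}(U)\leq\mathrm{diam}(V)<\varepsilon$. The crux of the argument is then elementary: for all large $n$ one has $a_n\in U$, whereas $\mathrm{diam}(A_n)\geq\varepsilon>\mathrm{diam}(U)$ forces $A_n\not\subseteq U$; since $A_n$ is connected and meets $U$ without being contained in it, it must meet $\partial U$ (otherwise the disjoint open sets $U$ and $X\setminus\overline U$ would disconnect $A_n$). Hence infinitely many of the pairwise disjoint continua $A_n$ each contain a point of the finite set $\partial U$ --- impossible. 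This contradiction gives the first assertion.

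For the ``in particular'' part, given a sequence $(A_n)_{n\geq 0}$ of pairwise disjoint continua, the first assertion says that for every $\varepsilon>0$ all but finitely many $A_n$ have diameter $<\varepsilon$; hence $\mathrm{diam}(A_n)\to 0$, and the same holds along any infinite subsequence, which is exactly the statement that $(A_n)_n$ is a null family. I do not anticipate a genuine obstacle here: everything hinges on the single, essentially bookkeeping, step of choosing a small neighbourhood of the limit point $a$ with finite boundary and counting how many of the $A_n$ can cross that boundary. Alternatively, one could simply invoke the fact recorded above that regular curves are finitely suslinean and translate its definition, but the direct argument is just as short and keeps the section self-contained.
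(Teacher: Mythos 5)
Your argument is correct, and it is genuinely a different route from the paper: the paper gives no proof of this proposition at all, treating it as a recalled fact and pointing to the literature (the remark just above it cites Lelek's result that regular curves are finitely suslinean, from which the statement is an immediate translation). You instead prove it directly from the definition of a regular curve: assuming infinitely many pairwise disjoint subcontinua $A_n$ with $\mathrm{diam}(A_n)\geq\varepsilon$, you pick $a_n\in A_n$ accumulating at some $a$, take an open $U\ni a$ with finite boundary and $\mathrm{diam}(U)<\varepsilon$, note that each such $A_n$ (for large $n$) meets $U$ but cannot lie in $U$, hence meets $\partial U$ by connectedness, and then the pigeonhole on the finite set $\partial U$ contradicts pairwise disjointness; the ``null family'' clause follows by quantifying over $\varepsilon$. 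Each step checks out (in particular the boundary-crossing step: if $A_n\cap\partial U=\emptyset$ then $U$ and $X\setminus\overline U$ would disconnect $A_n$). What your approach buys is a short, self-contained proof using only the finite-boundary neighborhood property, whereas the paper's approach buys brevity by outsourcing the work to the known fact that regular curves are finitely suslinean --- which is essentially the same combinatorial content, packaged as a citation. Either is acceptable; just be aware that your direct argument is, in effect, a proof of that cited fact restricted to the case at hand.
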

%
%
%

%

\begin{defn}$($Weak incompressibility$)$ A set $A \subset X$ is said to have \textit{the
weak incompressibility property} if for any proper closed subset $F\subsetneqq A$ (i.e. $F$ is nonempty and distinct from $A$), we have that $F\cap \overline{f(A\setminus F)}\neq\emptyset$.
\end{defn}
Notice that the term weak incompressibility seems to have
appeared first in \cite{bl}.

\begin{lem}$($\cite[Lemma 3, page 71]{BLOCK}$)$ \label{wc} For any $x\in X$, $\omega_f(x)$ has the weak incompressibility property.
\end{lem}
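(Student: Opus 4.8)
The plan is to argue by contradiction, showing that if such a set $F$ existed then the forward orbit of $x$ would eventually be trapped away from a nonempty part of its own $\omega$-limit set. Write $A=\omega_f(x)$ and suppose there is a closed set $F$ with $\emptyset\neq F\subsetneqq A$ and $F\cap\overline{f(A\setminus F)}=\emptyset$; put $U=A\setminus F\neq\emptyset$. Since $F$ and $\overline{f(U)}$ are disjoint compact subsets of $X$, I would first fix $\eps>0$ with $d\big(\overline{f(U)},F\big)>3\eps$, and then, using uniform continuity of $f$ on the compact space $X$, fix $\delta\in(0,2\eps)$ such that $d(a,b)<\delta$ implies $d(f(a),f(b))<\eps$. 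The first step is the elementary observation that \emph{if $d(a,U)<\delta$ then $d(f(a),F)>2\eps$}: choosing $u\in U$ with $d(a,u)<\delta$ gives $d(f(a),F)\ge d(f(u),F)-d(f(a),f(u))>3\eps-\eps=2\eps$, since $f(u)\in\overline{f(U)}$.

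Next I would invoke the standard fact that $d(f^n(x),A)\to 0$ (otherwise a convergent subsequence of the orbit would produce a point of $A=\omega_f(x)$ lying at positive distance from $A$), so there is $N$ with $d(f^n(x),A)<\delta$ for all $n\ge N$. Because $A=F\cup U$ and $A$ is compact, the point of $A$ nearest to $f^n(x)$ lies in $F$ or in $U$; hence for each $n\ge N$ either $d(f^n(x),F)<\delta$ or $d(f^n(x),U)<\delta$. Combining this with the first step shows that the property ``$d(f^n(x),U)<\delta$'' is self-propagating for $n\ge N$: if it holds at time $n$, then $d(f^{n+1}(x),F)>2\eps\ge\delta$, so $f^{n+1}(x)$ cannot be $\delta$-close to $F$ and must therefore be $\delta$-close to $U$. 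By induction $d(f^m(x),U)<\delta$ for all $m\ge n$, and consequently $d(f^m(x),F)>2\eps$ for all $m>n$.

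To conclude, note that $U$ is a nonempty subset of $A=\omega_f(x)$, so the orbit of $x$ comes within $\delta$ of some point of $U$ at some time $n_0\ge N$; by the previous paragraph $d(f^m(x),F)>2\eps$ for every $m>n_0$. But $F$ is also nonempty and contained in $\omega_f(x)$, so $d(f^m(x),F)<\eps$ for infinitely many $m$, which contradicts the previous sentence. This yields the claim.

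I expect the only genuine care to be in the bookkeeping with the two constants --- one needs $\delta<2\eps$, so that ``$f^{m}(x)$ lies at distance $>2\eps$ from $F$'' actually rules out ``$f^{m}(x)$ is $\delta$-close to $F$'' and forces it to be $\delta$-close to $U$. Beyond that the argument is routine; in particular it uses neither strong invariance of $A$ nor any special feature of $X$ other than compactness, so it applies verbatim in our setting of a regular curve.
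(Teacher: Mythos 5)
Your proof is correct. Note that the paper does not prove this lemma at all: it is quoted verbatim from Block--Coppel (Lemma 3, p.~71), so there is no in-paper argument to compare against. Your argument is a sound, self-contained version of the classical one: since $F$ and $\overline{f(A\setminus F)}$ are disjoint compacta, uniform continuity makes $\delta$-closeness to $U=A\setminus F$ propagate (the next iterate is $2\eps$-far from $F$, hence again $\delta$-close to $U$ once the orbit is $\delta$-close to $A$), so after the orbit first comes $\delta$-close to $U$ it can never again come $\eps$-close to $F$, contradicting $F\subset\omega_f(x)\neq\emptyset$. The bookkeeping is right: the dichotomy ``$\delta$-close to $F$ or to $U$'' for $n\geq N$ uses compactness of $A$, the estimate $d(f(a),F)>2\eps$ is justified (the infimum over the compact set $F$ is attained), and $\delta<2\eps$ closes the induction. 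The textbook proof is usually phrased slightly differently (looking at times when the orbit exits a neighborhood of $F$ and taking a limit point of those exit points), but the underlying mechanism --- continuity plus the fact that the orbit accumulates on both $F$ and $A\setminus F$ --- is the same, and, as you observe, only compactness of $X$ and continuity of $f$ are used, consistent with the generality of the cited result.
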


\begin{lem}\label{l27}  Let $A\subset \mathrm{P}(f)$ be a closed invariant subset of $X$ with the weak incompressibility property. If some $a\in A$ is an isolated point of $A$, then $A = O_{f}(a)$.
\end{lem}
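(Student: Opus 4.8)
The plan is to prove the two halves separately: first that the (finite) orbit $O_{f}(a)$ consists entirely of isolated points of $A$, and then to feed the closed set $A\setminus O_{f}(a)$ into the weak incompressibility property to force it to be empty.

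For the first step, write $n$ for the period of $a$ (recall $a\in A\subset\mathrm{P}(f)$) and set $O=O_{f}(a)=\{a,f(a),\dots,f^{n-1}(a)\}$, a finite, strongly $f$-invariant subset of $A$. Fix $p\in O$; then $f^{\ell}(p)=a$ for some $\ell\in\{1,\dots,n\}$. Suppose $p$ were not isolated in $A$ and choose $x_{j}\in A\setminus\{p\}$ with $x_{j}\to p$. Since $a$ is isolated in $A$, pick an open $U\ni a$ with $U\cap A=\{a\}$; by continuity $f^{\ell}(x_{j})\to f^{\ell}(p)=a$, and $f^{\ell}(x_{j})\in A$ since $A$ is invariant, so $f^{\ell}(x_{j})=a$ for all large $j$. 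Here is the key point: each $x_{j}$ is periodic, so $\mathrm{Orb}_{f}(x_{j})$ is a single cycle, and it contains $a$; hence it must coincide with the cycle $O_{f}(a)=O$, so in particular $x_{j}\in O$. As $O$ is finite and $x_{j}\to p\in O$, we get $x_{j}=p$ for all large $j$, contradicting $x_{j}\ne p$. Thus every point of $O$ is isolated in $A$.

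For the second step, being a finite set of points each isolated in $A$, the orbit $O$ is open in $A$, so $F:=A\setminus O$ is closed in $X$. Assume for contradiction that $A\ne O$; then $F\ne\emptyset$, and $F\ne A$ because $O\ne\emptyset$, so $F$ is a proper closed subset of $A$. The weak incompressibility property yields $F\cap\overline{f(A\setminus F)}\ne\emptyset$. But $A\setminus F=O$ and $f(O)=O$ is already closed, so $\overline{f(A\setminus F)}=O$, whence $F\cap O\ne\emptyset$ — impossible since $F=A\setminus O$. Therefore $A=O=O_{f}(a)$.

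The only real work is the first step, and it is exactly there that the hypothesis $A\subset\mathrm{P}(f)$ is indispensable: it is what forces a point of $A$ that is eventually mapped onto $a$ to lie on the periodic orbit of $a$ itself (for a general continuous map one would only get an eventually periodic point, and the argument would collapse). Once that is in place, the weak incompressibility property closes the argument essentially for free. Note that neither monotonicity of $f$ nor the regular-curve structure of $X$ is used in this lemma.
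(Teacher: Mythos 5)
Your proof is correct, and it differs from the paper's in two respects worth noting. The paper gets the openness of $O_{f}(a)$ in $A$ abstractly: since $A\subset \mathrm{P}(f)$ is closed and invariant, $f_{\mid A}$ is a bijection of a compact set, hence a homeomorphism, so the open singleton $\{a\}$ is carried to open singletons and $O_{f}(a)$ is open in $A$; it then applies weak incompressibility with $F=O_{f}(a)$ itself (finite, hence closed, and proper by assumption) and derives the contradiction from the injectivity of $f$ on $A$ together with $f(O_{f}(a))=O_{f}(a)$. You reach the same milestone --- every point of $O_{f}(a)$ is isolated in $A$ --- by a hands-on sequence argument that uses only periodicity (a point of $A$ mapped by some iterate onto $a$ must lie on the cycle of $a$), thereby avoiding both the injectivity of $f$ on $\mathrm{P}(f)$ and the compactness-based ``continuous bijection is a homeomorphism'' step; you then apply weak incompressibility to the complementary set $F=A\setminus O_{f}(a)$, where the contradiction is immediate because $f(A\setminus F)=f(O_{f}(a))=O_{f}(a)$ is already closed and disjoint from $F$. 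Your closing step is in fact a little cleaner than the paper's, which passes from $O_{f}(a)\cap\overline{f(A\setminus O_{f}(a))}\neq\emptyset$ to a contradiction and implicitly needs $A\setminus O_{f}(a)$ closed (i.e.\ the same openness) to remove the closure. Your concluding remark is also accurate: the lemma uses neither monotonicity nor the regular-curve hypothesis, only that $X$ is compact, $f$ continuous, and $A\subset\mathrm{P}(f)$ closed and invariant.
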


\begin{proof}
Assume that for some $a\in A, \{a\}$ is an open subset of $A$ and $O_{f}(a) \subsetneq A$. Since $f_{\mid A}$ is an homeomorphism, $O_{f}(a)$ is a finite open subset of $A$, thus also a proper closed subset of $A$. $A$ has the weak incompressibility property and $O_{f}(a)$ is a closed subset of $A$, we get $O_{f}(a)\cap \overline{f(A\setminus O_{f}(a) )}\neq\emptyset$. Therefore $O_{f}(a)\cap (A\setminus O_{f}(a))\neq \emptyset$, which contradict the fact that $f$ is one to one on $A$.
\end{proof}
	
Let $f$ be a monotone map on a regular curve $X$. If $M$ is an infinite minimal set of $f$, we call  $\mathcal{B}(M)=\{x\in X:  \omega_f(x)= M\}$ the \textit{basin of attraction of} $M$.
\medskip

\begin{thm}\cite{MONOTONE}\label{PAMM1}
Let $f : X\longrightarrow X$ be a regular curve monotone map. Then the following assertions hold:
\begin{itemize}
	\item[(1)] $\omega_f(x)$ is a minimal set, for all $x\in X$.
\item[(2)] $\alpha_{f}((x_{n})_{n\geq 0})$ is a minimal set any negative orbit $(x_{n})_{n\geq 0}$ of $x\in X_{\infty}$. Moreover if $x\in X_{\infty}\setminus \mathrm{P}(f)$, then $\alpha_{f}(x)$ is a minimal set and $\alpha_{f}(x)= \alpha_{f}((x_{n})_{n\geq 0})$, for any negative orbit $(x_{n})_{n\geq 0}$ of $x$.
\item[(3)] For every infinite minimal set $M$,  $\mathcal{B}(M)$ is a closed subset of $X$.
\item[(4)] For every $x\in X_{\infty}$, if $\omega_{f}(x)$ is infinite, then $\alpha_{f}(x)=\omega_{f}(x)$.
\item[(5)] $\mathrm{AP}(f)=\Lambda(f) = \mathrm{R}(f)$.
\end{itemize}
\end{thm}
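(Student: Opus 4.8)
The plan is to observe first that the five assertions are not independent: once (1)–(3) are in hand, (4) and (5) drop out quickly, so the real content is in the first three items, and all of them are powered by the same two structural features. The first is monotonicity, which forces each $f^{-n}(y)$ to be a subcontinuum and makes $f$ behave ``componentwise like a homeomorphism'' wherever diameters are small. The second is that a regular curve is finitely suslinean, so by Proposition \ref{Fs} every pairwise disjoint family of subcontinua is null; equivalently, a closed subset of $X$ has only finitely many connected components of diameter above any prescribed $\varepsilon$. The engine of the argument is the interplay of these facts with the weak incompressibility of $\omega$-limit sets (Lemma \ref{wc}).

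For (1): if $\omega_f(x)$ is finite it is a periodic orbit, hence minimal, so I would assume $\Omega:=\omega_f(x)$ infinite, with $f(\Omega)=\Omega$ and $\Omega$ weakly incompressible, and aim to show $\overline{\mathrm{Orb}_f(y)}=\Omega$ for all $y\in\Omega$. First I would show $\Omega\cap\mathrm{P}(f)=\emptyset$: a periodic $p\in\Omega$ makes $F:=\mathrm{Orb}_f(p)$ a finite, closed, proper, invariant subset of $\Omega$, and since $f$ is injective on $F$ (as in the proof of Lemma \ref{l27}) while the fibres of $f$ meeting $\Omega$ form a null family, weak incompressibility would push a point of $\Omega\setminus F$ into $F$, a contradiction. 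Then I would pass to the connected components of $\Omega$: $f$ maps components onto components, the components form a null family (only finitely many non-small), and applying weak incompressibility to unions of components, together with the componentwise near-injectivity of $f$, rules out any splitting of $\Omega$ into a proper closed invariant subset. Keeping the small components from conspiring into a nontrivial invariant closed set is the step I expect to be the main obstacle.

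Assertion (2) would be handled by the same scheme applied to backward orbits. For a fixed negative orbit $(x_n)_n$ the set $\alpha_f((x_n)_n)$ is non-empty, closed and strongly invariant by Proposition \ref{p43}(ii), and its minimality follows exactly as in (1), using now that the $f^{-n}(x)$ shrink so the backward dynamics on the relevant components is genuinely that of a homeomorphism. For the full set $\alpha_f(x)$ with $x\in X_{\infty}\setminus\mathrm{P}(f)$ the point is branch-independence: the fibres $f^{-1}(\cdot)$ are subcontinua forming a null family, so any two negative orbits of $x$ are eventually trapped in arbitrarily small nested continua and accumulate on the same set; hence $\alpha_f(x)=\alpha_f((x_n)_n)$, which is minimal.

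For (3), given an infinite minimal set $M$ and $x_k\to x$ with $\omega_f(x_k)=M$, I would use a neighbourhood $U\supset M$ with finite boundary (available since $X$ is a regular curve) on which the forward dynamics is trapping — orbits that enter $U$ deeply cannot leave without crossing $\partial U$, which by (1) applied at the finitely many boundary points can happen only finitely often — to deduce $\omega_f(x)=M$, so $\mathcal{B}(M)$ is closed. Then (4): if $\omega_f(x)$ is infinite then $x\notin\mathrm{P}(f)$, so by (2) $N:=\alpha_f(x)$ is an infinite minimal set equal to $\alpha_f((x_n)_n)$ for every negative orbit $(x_n)_n$ of $x$; for such a negative orbit $d(x_n,N)\to 0$, while $\omega_f(x_n)=\omega_f(x)$ for all $n$ (since $x$ is a forward iterate of $x_n$), so every $x_n$ lies in $\mathcal{B}(\omega_f(x))$, which is closed by (3); hence $N\subseteq\mathcal{B}(\omega_f(x))$, and picking $y\in N$ gives $\omega_f(x)=\omega_f(y)=N$, i.e.\ $\alpha_f(x)=\omega_f(x)$. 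Finally (5) is immediate from (1): if $y\in\Lambda(f)$ then $y\in\omega_f(z)$ for some $z$ and $\omega_f(z)$ is minimal, so $f$ restricted to it is a minimal system and $y$ is almost periodic; combined with the standard inclusions $\mathrm{AP}(f)\subseteq\mathrm{R}(f)\subseteq\Lambda(f)$ this yields $\mathrm{AP}(f)=\mathrm{R}(f)=\Lambda(f)$.
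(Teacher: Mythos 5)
This theorem is not proved in the present manuscript at all: it is imported from the authors' earlier work \cite{MONOTONE}, so there is no internal proof to compare with and your sketch has to stand on its own. The parts of it that do stand are the reductions: your derivation of (4) from (2) and (3) (every point of a negative orbit of $x$ has the same $\omega$-limit set as $x$, hence $\alpha_{f}(x)=\alpha_f((x_n)_{n})\subset\mathcal{B}(\omega_f(x))$ by closedness of the basin, and minimality of $\alpha_f(x)$ then forces $\alpha_f(x)=\omega_f(x)$) and of (5) from (1) are correct and clean. Likewise, the branch-independence idea in (2) is salvageable, though you should make it precise: for $x\notin\mathrm{P}(f)$ the fibres $f^{-n}(x)$, $n\geq 0$, are \emph{pairwise disjoint} subcontinua (if $z\in f^{-n}(x)\cap f^{-m}(x)$ with $n<m$ then $f^{m-n}(x)=x$), hence a null family by Proposition \ref{Fs}, so $\mathrm{diam}(f^{-n}(x))\to 0$ and all negative orbits of $x$ have the same limit set, which also gives $\alpha_f(x)=\alpha_f((x_n)_n)$.

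The genuine gaps are in the core items (1)--(3), which carry all the content. In (1), weak incompressibility only yields $F\cap\overline{f(\Omega\setminus F)}\neq\emptyset$, i.e.\ a point of $F=O_f(p)$ is a \emph{limit} of images of points of $\Omega\setminus F$; it does not ``push a point of $\Omega\setminus F$ into $F$'', so your exclusion of periodic points from an infinite $\omega$-limit set is not established (for non-monotone interval maps infinite $\omega$-limit sets containing periodic points do exist, so monotonicity must enter in an essential, quantitative way here). Moreover the decisive step---ruling out an arbitrary proper closed invariant subset, which need not be a union of connected components of $\Omega$---is exactly what you label ``the main obstacle'' and leave open, so (1) is not proved. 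In (2), Lemma \ref{wc} is stated for $\omega$-limit sets only; there is no weak incompressibility statement for $\alpha$-limit sets of branches in the toolkit, so ``follows exactly as in (1)'' is a leap even granting (1). In (3), the ``trapping'' argument fails as stated: a forward orbit of a \emph{point} can leave $U$ without ever meeting the finite set $\partial U$, so (1) applied at boundary points controls nothing. The finite-boundary mechanism only bites when connected sets are pushed through iterates---one joins $x$ to $x_k$ by small arcs $I_k$ (local arcwise connectedness of the regular curve) and uses monotonicity so that images or preimages of these arcs are connected and must cross $\partial U$ at one of finitely many points; your sketch never introduces these arcs, so the conclusion $\omega_f(x)=M$ and hence closedness of $\mathcal{B}(M)$ is not justified.
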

\bigskip
\section{\bf Nonwandering sets of monotone maps on regular curves}
The aim of this section is to prove the following theorem which extends  \cite[Theorem 1.1]{aam} and \cite[Theorem 2.1]{n2}.

\begin{thm}\label{t41} Let $X$ be a regular curve and $f$ a monotone self mapping of $X$. Then $\Omega(f)= \mathrm{R}(f)=\Lambda(f)=\mathrm{AP}(f)$.
\end{thm}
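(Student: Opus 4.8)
The plan is to prove the chain of equalities $\mathrm{AP}(f)=\mathrm{R}(f)=\Lambda(f)=\Omega(f)$ by establishing the only nontrivial inclusion, namely $\Omega(f)\subseteq \mathrm{AP}(f)$, since the reverse inclusions $\mathrm{AP}(f)\subseteq \mathrm{R}(f)\subseteq \Lambda(f)\subseteq\Omega(f)$ are already recorded in the Preliminaries, and $\Lambda(f)=\mathrm{R}(f)=\mathrm{AP}(f)$ follows from Theorem \ref{PAMM1}(5). So it suffices to fix $x\in\Omega(f)$ and show $x\in\mathrm{AP}(f)$. First I would dispose of the case $x\in\mathrm{P}(f)$, which is immediate. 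So assume $x$ is a nonwandering, non-periodic point.

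The key idea is to exploit the regular-curve structure together with monotonicity. Since $X$ is a regular curve, every point $x$ has arbitrarily small open neighborhoods $U$ with finite boundary $\partial U$; moreover, by local arcwise connectedness one can take the relevant neighborhoods to be connected (subcontinua after taking closures). Because $f$ is monotone, $f^n$ is monotone for all $n$, so preimages of connected sets are connected — this is the structural lever that controls how orbits can return. The strategy is: given a small connected neighborhood $U$ of $x$ with finite boundary, the nonwandering hypothesis yields $n\ge 1$ with $f^n(U)\cap U\neq\emptyset$. I would then build, using monotonicity and the finiteness of boundaries, a chain of connected subcontinua joining $x$ to a point of its orbit, and iterate. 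The crucial quantitative input is Proposition \ref{Fs}: any pairwise-disjoint family of subcontinua of $X$ is a null family. This forces the subcontinua produced along the construction either to shrink to points (giving genuine recurrence of $x$ itself, hence $x\in\omega_f(x)$, i.e. $x\in\mathrm{R}(f)$) or to overlap in a controlled way that again returns the orbit near $x$. Once $x\in\mathrm{R}(f)$ is established, Theorem \ref{PAMM1}(5) upgrades recurrence to almost periodicity, closing the argument.

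Alternatively, and perhaps more cleanly, I would argue via $\omega$-limit sets directly: by Theorem \ref{PAMM1}(1), every $\omega_f(y)$ is a minimal set. The set of all such minimal sets, together with their basins (closed by Theorem \ref{PAMM1}(3) for the infinite ones), organizes the dynamics. One shows that a nonwandering point cannot lie in the "wandering part" between minimal sets: if $x\notin\mathrm{AP}(f)$, then $x\notin\omega_f(x)$, and the forward orbit of $x$ is eventually trapped in the basin of some minimal set $M$ with $x\notin M$; using the weak incompressibility of $\omega$-limit sets (Lemma \ref{wc}) and Lemma \ref{l27} in the periodic case, one then constructs a neighborhood $U$ of $x$ whose iterates $f^n(U)$ march away toward $M$ and never meet $U$ again, contradicting $x\in\Omega(f)$. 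The finiteness of boundaries of small neighborhoods, combined with the null-family property, is what makes this "marching away" rigorous on a regular curve.

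The main obstacle will be the last step in either approach: converting the purely topological nonwandering condition into a genuine return of the point $x$ (not merely of its neighborhood). On a regular curve one does not have the full strength of the tree/graph structure, and circles can be present (e.g. the Sierpiński triangle), so one must handle returns that "wind around" circular pieces. The right tool is monotonicity: preimages of arcs/circles are connected, which severely restricts the combinatorics of returns, and Proposition \ref{Fs} kills the possibility of infinitely many large disjoint return-continua. I expect the proof to proceed by contradiction, carefully choosing a nested sequence of connected neighborhoods of $x$ with finite boundaries of bounded cardinality, and extracting from the nonwandering returns a subcontinuum containing $x$ that is invariant under some power of $f$ up to a null error — forcing $x$ into its own $\omega$-limit set.
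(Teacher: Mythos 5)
Your reduction is right: by Theorem \ref{PAMM1}(5) the whole statement comes down to $\Omega(f)\subseteq \mathrm{R}(f)$, and the inclusions $\mathrm{AP}(f)\subseteq \mathrm{R}(f)\subseteq\Lambda(f)\subseteq\Omega(f)$ are free. But from that point on your proposal is a plan, not a proof, and the step you yourself flag as ``the main obstacle'' --- converting the nonwandering condition into an actual return of the point $x$ --- is precisely the step you never carry out. Phrases like ``extracting from the nonwandering returns a subcontinuum containing $x$ that is invariant under some power of $f$ up to a null error'' and ``iterates of $U$ march away toward $M$ and never meet $U$ again'' do not describe an argument that can be executed as stated: nothing in the nonwandering hypothesis gives you an almost-invariant continuum through $x$, and the ``marching away'' picture is exactly what needs proof, not a tool for it. Proposition \ref{Fs} and the basins of Theorem \ref{PAMM1}(3) do not by themselves bridge this gap.

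The mechanism that actually works (and that your sketch misses) is different and quite concrete. Assume $x\in\Omega(f)\setminus\mathrm{R}(f)$. The nonwandering property gives $x_k\to x$ and $n_k$ with $f^{n_k}(x_k)\to x$; local arcwise connectedness gives arcs $I_k$ joining $x_k$ to $x$ with $I_k\to\{x\}$, contained in a neighborhood $U_1$ of $x$ with \emph{finite} boundary, chosen disjoint from a neighborhood $U_2$ of $\omega_f(x)$ (using $x\notin\omega_f(x)$). Since $f^{n_k}(I_k)$ is connected and meets both $U_1$ and $U_2$, it crosses $\partial U_1$, and finiteness of $\partial U_1$ pins down a single point $b\in\partial U_1$ with $f^{-n_k}(b)\cap I_k\neq\emptyset$ for infinitely many $k$; hence $x\in\alpha_f(b)$. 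Now the key input is Theorem \ref{PAMM1}(2): if $b\notin\mathrm{P}(f)$ then $\alpha_f(b)$ is \emph{minimal}, so it equals $\omega_f(x)$ and forces $x\in\omega_f(x)$, a contradiction. To guarantee $b\notin\mathrm{P}(f)$ one must split into the cases $x\notin\overline{\mathrm{P}(f)}$ (choose $U_1$ inside a neighborhood avoiding $\overline{\mathrm{P}(f)}$) and $x\in\overline{\mathrm{P}(f)}$; in the latter case one instead uses periodic points $x_k\to x$ with unbounded periods $p_k$, pulls the arcs back by $f^{-p_k}$ across $\partial U_1$ (here, unlike the first case, monotonicity is what keeps the preimages connected), finds $b\in\partial U_1$ with $f^{p_k}(b)\in I_k$ infinitely often, hence $x\in\omega_f(b)$, and minimality of $\omega_f(b)$ (Theorem \ref{PAMM1}(1)) again yields $x\in\mathrm{R}(f)$. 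Your proposal contains neither this boundary-pinning argument nor the dichotomy on $\overline{\mathrm{P}(f)}$ (your case split on $x\in\mathrm{P}(f)$ is not the relevant one), so as it stands it has a genuine gap at the heart of the theorem.
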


\begin{proof} Following Theorem \ref{PAMM1}, (5), it suffices to prove that $\Omega(f)\subset \textrm{R}(f)$. Assume that there exists $x\in \Omega(f)\setminus \textrm{R}(f)$. We distinguish two cases:
	\medskip
	
	\textit{Case 1}: $x\notin \overline{P(f)}$. In this case, we follow similarly the proof given in (\cite[Theorem 2.1]{n2}). Let $V$ be an open neighborhood of $x$ such that $\overline{V}\cap \textrm{P}(f)=\emptyset$. As $x\in \Omega(f)$, 
	there is a sequence $(x_{k})_{k\geq 0}$ in $X$ converging to $x$ and a sequence of positive integers $(n_{k})_{k\geq 0}$ such that $f^{n_{k}}(x_{k})$ converges to $x$. Since $x\notin \textrm{R}(f)$, $x\notin \omega_{f}(x)$. Then we can find an open neighborhood $U_1\subset V$ of $x$ with finite boundary and an open neighborhood $U_2$ of $\omega_{f}(x)$ such that  $U_1\cap U_2=\emptyset$.
	 Since $X$ is locally connected, so by (\cite[Theorem 4, page 257]{Kur}), for $k$ large enough, we can find a sequence of arcs $(I_{k})_{k\geq 0}\subset U_1$ joining $x_{k}$ and $x$ such that $\displaystyle\lim_{k\to +\infty} I_{k}=\{x\}$ (with respect to the Hausdorff metric). Thus $f^{n_{k}}(I_{k})$ will meets $U_1$ and $U_2$, for $k$ large enough and so it meets the boundary $\partial U_{1}$ in a point $b_k$, for $k$ large enough. As $\partial U_1$ is finite, one can assume that $b_k=b$ for infinitely many $k$. Therefore $f^{-n_{k}}(b)\cap I_{k}\neq \emptyset$, for infinitely many $k$. It follows that $x\in \alpha_{f}(b)$. As $b\notin \textrm{P}(f)$, then $\alpha_{f}(b)$ is a minimal set of $f$ (Theorem \ref{PAMM1}). So $\omega_{f}(x) = \alpha_{f}(b)$ and hence $x\in \omega_{f}(x)$. A contradiction.\\
	
	\textit{Case 2}: $x\in \overline{P(f)}$.
	Let $(x_{n})_{n\geq 0}\subset \textrm{P}(f)$ be a sequence  converging to $x$ and set $p_{n} = \textrm{Per}(x_{n})$ the period of $x_n$, $n\geq 0$. Then $(p_{n})_{n\geq 0}$ in unbounded: otherwise, $x\in \textrm{P}(f) \subset \textrm{R}(f)$, a contradiction. So we can assume that $(p_{n})_{n\geq 0}$ goes to infinity. Since $\alpha_{f}(x)$ is a minimal set, $x\notin \alpha_{f}(x)$ (because otherwise, $x\in \omega_{f}(x)$, a contradiction). Now as in Case 1, let $U_{2}$ be an open neighborhood of $\alpha_{f}(x)$ and $U_{1}$ an open neighborhood of $x$ with finite boundary and disjoint from $U_{2}$. Let $(I_{k})_{k\geq 0}$ be a sequence of arcs joining $x$ and $x_{k}$ such that $\displaystyle\lim_{k\to +\infty} I_{k}=\{x\}$ (with respect to the Hausdorff metric). Thus $f^{-p_{k}}(I_{k})$ meets $\partial U_{1}$ infinitely many times and thus there exists $b\in \partial U_{1}$ such that $f^{p_{k}}(b)\in I_{k}$. This implies that $x\in \omega_{f}(b)$. As $\omega_{f}(b)$ is a minimal, so $x\in \textrm{R}(f)$. A contradiction.
\end{proof}		
\medskip

\begin{cor}\label{c513}	If $\Omega(f)$ is finite or countable, then $\Omega(f)= \mathrm{P}(f)$.
\end{cor}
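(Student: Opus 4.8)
The plan is to combine Theorem \ref{t41} with the structural results on $\omega$-limit sets (Theorem \ref{PAMM1}(1)) and the weak-incompressibility lemma (Lemma \ref{l27}). By Theorem \ref{t41} we have $\Omega(f)=\mathrm{R}(f)=\mathrm{AP}(f)$, so the hypothesis says that $\mathrm{AP}(f)$ is finite or countable. We must show there are no infinite minimal sets and that every minimal set is in fact a periodic orbit; equivalently, that $\mathrm{AP}(f)\subset \mathrm{P}(f)$ (the reverse inclusion is automatic).

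First I would argue that $f$ has no infinite minimal set. Suppose $M$ is an infinite minimal set. Since $M$ is minimal and infinite, it is a perfect set (it has no isolated point: an isolated point of $M$ would, by minimality, force $M$ to be a single periodic orbit, hence finite). A non-empty perfect subset of a compact metric space is uncountable. But $M\subset \mathrm{AP}(f)=\Omega(f)$, so $\Omega(f)$ would be uncountable, contradicting the hypothesis. Hence every minimal set of $f$ is finite, and therefore (a finite minimal set under a continuous map is a single periodic orbit) every minimal set is a periodic orbit; in particular $\mathrm{AP}(f)$, being the union of all minimal sets, is contained in $\mathrm{P}(f)$. Combined with $\mathrm{P}(f)\subset\mathrm{AP}(f)$ we get $\mathrm{AP}(f)=\mathrm{P}(f)$, and then Theorem \ref{t41} gives $\Omega(f)=\mathrm{P}(f)$.

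Alternatively, and perhaps more in the spirit of the lemmas already set up, one can avoid the perfect-set argument as follows: by Theorem \ref{PAMM1}(1) every $\omega_f(x)$ is minimal, and $\Lambda(f)=\mathrm{R}(f)=\Omega(f)$ is countable, so each minimal set $M=\omega_f(x)$ is a countable closed set and hence has an isolated point. Since $M\subset \mathrm{P}(f)$ would be needed to invoke Lemma \ref{l27} directly, instead observe that a minimal set with an isolated point is a periodic orbit (by minimality the orbit of that isolated point is a closed invariant subset equal to $M$, and finiteness of $M$ then follows since it is also countable and compact with the subspace topology discrete at one point, hence discrete everywhere by homogeneity of minimal sets under the action). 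Either route yields that every minimal set is finite.

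The main obstacle is the small topological step that an infinite minimal set cannot be countable — i.e., that it must be perfect and hence uncountable. This is standard but should be stated carefully: in a minimal system, either every point is isolated (forcing the whole space to be a single periodic orbit, as the space is compact) or no point is isolated; in the latter case the space is perfect, hence uncountable by the Baire category theorem applied to the compact metric minimal set. Once this dichotomy is in place, the corollary follows immediately from Theorem \ref{t41} and the observation that $\mathrm{AP}(f)$ is the union of all (necessarily finite, hence periodic) minimal sets.
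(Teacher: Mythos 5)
Your proof is correct and follows essentially the same route as the paper: invoke Theorem \ref{t41} to identify $\Omega(f)$ with $\mathrm{AP}(f)$, conclude every minimal set is at most countable and hence a periodic orbit, and deduce $\Omega(f)=\mathrm{AP}(f)=\mathrm{P}(f)$. The only difference is that you spell out the standard dichotomy (an infinite minimal set is perfect, hence uncountable) which the paper leaves implicit, so no further comment is needed.
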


\begin{proof} Since $\Omega(f)=\textrm{AP}(f)$ (Theorem \ref{t41}), so $\textrm{AP}(f)$ is at most countable and hence every minimal set for $(X, f)$ is a periodic orbit. Therefore $\textrm{AP}(f)= \textrm{P}(f)$ and then Corollary \ref{c513} follows.
\end{proof}
\medskip

In \cite{Coven}, Coven and Nitecki have shown that for continuous maps $f$ of the closed interval $[0,1]$, $\Omega(f)=\{x\in [0,1]: x\in \alpha_{f}(x)\}.$ Later, it is extended to graph maps in \cite[Corollary 1]{MS}. However, for dendrite maps, it does not holds (see \cite{SQLTX}). The following theorem extend the above result to monotone maps on regular curves.
\medskip

\begin{thm}\label{RR}
	Let $X$ be a regular curve and $f$ a self monotone mapping of $X$. Then $\Omega(f)= \{x\in X: x\in \alpha_{f}(x)\}$.
\end{thm}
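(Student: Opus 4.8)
The plan is to prove the two inclusions separately; only the reverse one uses the standing hypotheses, and it will fall out almost immediately from Theorems \ref{t41} and \ref{PAMM1}.

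First I would dispatch the inclusion $\{x\in X:\ x\in\alpha_f(x)\}\subseteq\Omega(f)$, which in fact holds for an arbitrary continuous self-map of any metric space. If $x\in\alpha_f(x)$, then by the characterization of $\alpha$-limit sets recalled earlier there exist an increasing sequence of positive integers $(n_k)_k$ and points $x_k\to x$ with $f^{n_k}(x_k)=x$. Given an arbitrary neighborhood $U$ of $x$, choosing $k$ large enough that $x_k\in U$ gives $x=f^{n_k}(x_k)\in f^{n_k}(U)\cap U$, so $f^{n_k}(U)\cap U\neq\emptyset$ with $n_k\geq 1$; hence $x\in\Omega(f)$.

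For the reverse inclusion $\Omega(f)\subseteq\{x\in X:\ x\in\alpha_f(x)\}$, I would take $x\in\Omega(f)$. By Theorem~\ref{t41}, $x\in\mathrm{R}(f)$, so $x\in\omega_f(x)$; moreover $x\in\Omega(f)\subseteq X_\infty$, so $\alpha_f(x)\neq\emptyset$. By Theorem~\ref{PAMM1}(1), $M:=\omega_f(x)$ is a minimal set and $x\in M$. If $M$ is infinite, Theorem~\ref{PAMM1}(4) yields $\alpha_f(x)=\omega_f(x)=M$, so $x\in\alpha_f(x)$. If $M$ is finite, then $M$ is a periodic orbit and, since $x\in M$, the point $x$ is periodic of some period $p$; taking $x_k=x$ and $n_k=kp$ in the characterization of $\alpha_f(x)$ gives $f^{n_k}(x_k)=x$ and $x_k\to x$, so again $x\in\alpha_f(x)$. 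This closes both inclusions.

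I do not expect a genuine obstacle here. The only thing to get right is the bookkeeping of which earlier result does what: Theorem~\ref{t41} reduces nonwandering points to recurrent points, Theorem~\ref{PAMM1}(1) makes $\omega_f(x)$ minimal, and Theorem~\ref{PAMM1}(4) is precisely what collapses $\alpha_f(x)$ onto $\omega_f(x)$ in the infinite case; the finite (periodic) case is then handled by the trivial observation above that a periodic point lies in its own $\alpha$-limit set.
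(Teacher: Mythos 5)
Your proof is correct, and it takes a somewhat different route from the paper's in both halves, though it rests on the same two inputs (Theorem \ref{t41} and Theorem \ref{PAMM1}). For the inclusion $\{x\in X:\ x\in\alpha_f(x)\}\subseteq\Omega(f)$ you give a purely elementary argument -- a point belonging to its own $\alpha$-limit set is nonwandering for \emph{any} continuous self-map of a metric space -- whereas the paper argues via the periodic/non-periodic dichotomy and minimality of $\alpha_f(x)$ (Theorem \ref{PAMM1}(2)) to get $x\in\omega_f(x)$ and then uses $\mathrm{R}(f)=\Omega(f)$; your route is shorter, uses neither monotonicity nor the regular-curve structure, and makes transparent that all the dynamical content of the theorem sits in the reverse inclusion. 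For $\Omega(f)\subseteq\{x\in X:\ x\in\alpha_f(x)\}$ the paper chooses a negative orbit of $x$ inside the strongly invariant minimal set $\omega_f(x)$ and concludes by minimality that $\alpha_f((x_n)_{n\geq 0})=\omega_f(x)\ni x$, while you split according to whether $M=\omega_f(x)$ is infinite -- where Theorem \ref{PAMM1}(4) gives $\alpha_f(x)=\omega_f(x)$ directly, legitimately applied since $\Omega(f)\subseteq X_\infty$ -- or finite, in which case $x$ is periodic and trivially in $\alpha_f(x)$. Both mechanisms are sound; the paper's version avoids the case split and actually produces the stronger conclusion $x\in\alpha_f((x_n)_{n\geq 0})$ for a suitable backward branch (which is exactly what Proposition \ref{RRS} later upgrades to the statement about $s\alpha_f$), while yours avoids introducing branch $\alpha$-limit sets altogether.
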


\begin{proof}
	Let $x\in \Omega(f)$. By Theorem \ref{t41}, $x\in \omega_{f}(x)$ and hence there exists a negative orbit $(x_{n})_{n\geq 0}$ of $x$ such that $(x_{n})_{n\geq 0}\subset \omega_{f}(x)$. By minimality of $\omega_{f}(x)$, we have $x\in \omega_{f}(x)=\alpha_f((x_n)_{n\geq 0})\subset \alpha_{f}(x)$. Conversely, let $x\in X$ such that $x \in \alpha_{f}(x)$. We can assume that $x\notin \textrm{P}(f)$ (if $x\in \textrm{P}(f)$ then $x\in \Omega(f)$ and we are done). So by Theorem \ref{PAMM1}, $\alpha_{f}(x)$ is a minimal set and $x\in \alpha_{f}(x)$. Therefore $\omega_{f}(x)=\alpha_{f}(x)$. Hence $x\in \omega_{f}(x)$ and so $x\in \textrm{R}(f) = \Omega(f)$.
\end{proof}
\medskip

\begin{cor}
	Let $X$ be a regular curve and $f$ a self monotone mapping of $X$. If $x\in \Omega(f)$, then :\\
	\rm{(i)} $\omega_{f}(x)\subset \alpha_{f}(x)$.\\
	\rm{(ii)} If $x\notin \mathrm{P}(f)$, then $\omega_{f}(x) = \alpha_f((x_n)_{n\geq 0}) = \alpha_{f}(x)$, for any negative orbit $(x_{n})_{n\geq 0}$ of $x$.
\end{cor}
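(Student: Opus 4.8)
The plan is to deduce both parts directly from Theorem \ref{t41}, Theorem \ref{RR} (in fact from the chain of inclusions exhibited inside its proof), and the minimality statements of Theorem \ref{PAMM1}. The key point is that $x\in\Omega(f)$ already gives, by Theorem \ref{t41}, that $x\in\mathrm{R}(f)$, i.e. $x\in\omega_f(x)$, and that $\omega_f(x)$ is a minimal set by Theorem \ref{PAMM1}(1); so any nonempty closed $f$-invariant subset of $\omega_f(x)$ coincides with $\omega_f(x)$.

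For (i), I would argue exactly as in the proof of Theorem \ref{RR}: since $\omega_f(x)$ is minimal it is strongly invariant, hence $f|_{\omega_f(x)}$ is onto, and one can pull $x$ back step by step inside $\omega_f(x)$ to produce a negative orbit $(x_n)_{n\ge 0}$ of $x$ with $(x_n)_{n\ge 0}\subset\omega_f(x)$. By Proposition \ref{p43}(ii), $\alpha_f((x_n)_{n\ge 0})$ is nonempty, closed and (strongly) $f$-invariant, and it sits inside the minimal set $\omega_f(x)$, so $\alpha_f((x_n)_{n\ge 0})=\omega_f(x)$. Since $\alpha_f((x_n)_{n\ge 0})\subset\alpha_f(x)$ always, this yields $\omega_f(x)=\alpha_f((x_n)_{n\ge 0})\subset\alpha_f(x)$, which is (i).

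For (ii), assume in addition $x\notin\mathrm{P}(f)$. Using the inclusion $\Omega(f)\subseteq X_\infty$, we have $x\in X_\infty\setminus\mathrm{P}(f)$, so Theorem \ref{PAMM1}(2) applies: $\alpha_f(x)$ is a minimal set and $\alpha_f(x)=\alpha_f((y_n)_{n\ge 0})$ for every negative orbit $(y_n)_{n\ge 0}$ of $x$. By part (i), $\omega_f(x)$ is a nonempty closed $f$-invariant subset of the minimal set $\alpha_f(x)$, hence $\omega_f(x)=\alpha_f(x)$; combining this with Theorem \ref{PAMM1}(2) gives $\omega_f(x)=\alpha_f((x_n)_{n\ge 0})=\alpha_f(x)$ for any negative orbit $(x_n)_{n\ge 0}$ of $x$.

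I do not anticipate a real obstacle: the corollary is essentially a repackaging of Theorem \ref{RR} and Theorem \ref{PAMM1}(2). The only step I would state with a little care is the production of a negative orbit of $x$ lying inside $\omega_f(x)$, which relies on the strong invariance of minimal sets in compact systems (equivalently, on surjectivity of $f|_{\omega_f(x)}$); everything else is bookkeeping with minimality and with the standard inclusion $\mathrm{P}(f)\subseteq\mathrm{AP}(f)\subseteq\mathrm{R}(f)\subseteq\Lambda(f)\subseteq\Omega(f)\subseteq X_\infty$.
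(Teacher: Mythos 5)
Your argument is correct and follows essentially the same route as the paper: the paper's proof of (i) simply cites Theorem \ref{RR} to get $x\in\alpha_f(x)$ and then uses closedness and $f$-invariance of $\alpha_f(x)$ (Proposition \ref{p43}), whereas you unfold the same pull-back-inside-$\omega_f(x)$ argument that appears in the proof of Theorem \ref{RR}; part (ii) is handled identically in both, via Theorem \ref{PAMM1}(2) and minimality of $\alpha_f(x)$. No gaps.
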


\begin{proof}
	\rm{(i)} Since $x\in \Omega(f)$, so $x\in \alpha_{f}(x)$ (Theorem \ref{RR}) and hence $\omega_{f}(x) \subset \alpha_{f}(x)$ (since by Proposition \ref{p43}, $\alpha_{f}(x)$ is closed and $f$-invariant).\\
	\rm{(ii)} If $x\notin \textrm{P}(f)$, then $\alpha_f((x_n)_n) = \alpha_{f}(x)$ (Theorem \ref{PAMM1}). Again by Theorem \ref{PAMM1} and from (i), we get $\alpha_{f}(x) = \omega_{f}(x)$.
\end{proof}
\medskip
	
	 In the following example, we show that Theorem \ref{t41} cannot be extended to rational curves; we construct a self monotone map $f$ on a rational curve $D$ (it is in fact a dendroid) such that the inclusion is strict: $\textrm{R}(f) \subsetneq \Omega(f)$.
	
 Notice also that Theorem \ref{t41} does not hold even for continuous maps on intervals; for instance; we may extend the map $g: L\to L$ (defined in Example \ref{CEM2}) into a continuous map $h$ of $[0,1]$ so that $T_{1}\in \Lambda(h)\subset \Omega(h)$ but $T_{1}\notin \textrm{R}(h)$.
		
	\medskip
	
	\begin{exe}\label{CEM2}
		\rm{ The idea of the construction consists to define a continuous map $g$ on a countable compact set $L$ satisfying $\mathrm{R}(g) \subsetneq \Omega(g)$ and then extends it to a monotone map $f$ defined on a dendroid $D$.\\

			$\bullet$ Let $\sigma$ be the shift map on the Cantor set $\{0,1\}^{\mathbb{N}}$ endowed with the metric $d$ defined as follows: For $x, y\in \{0,1\}^{\mathbb{N}}$, $d(x,y) = 2^{-N(x,y)}$, where $N(x,y) =\min\{n\in \mathbb{N}: x_{n}\neq y_{n}\}$ with $N(x,x) = +\infty, \forall x\in \{0,1\}^{\mathbb{N}}$. We denote by:\\
			
			We let  $T_{0}=\overline{0}$ and $T_{i} = (000000\dots\underbrace{1}_{i^{th}-\textrm{position}}0\dots), \textrm{ for }~ i>0$. Set\\
			
			$\mathcal{Z} = (\underbrace{10}\underbrace{100}\underbrace{1000}\dots\underbrace{10\dots0}_{n-zero}1\dots)$,\;
				
			$O_{-}(\mathcal{Z})=\{T_{-i}=(000\dots \underbrace{\mathcal{Z}}_{i^{th}
					-\textrm{position}}): i\geq 1\}$,
				
				$L = O_{\sigma}(\mathcal{Z}) \cup \{T_{i}: i\geq 0\} \cup O_{-}(\mathcal{Z})$.
				
			\medskip
			
			Observe that $(T_{-i})_{i\geq 1}$ converges to $T_0$. Let us show that $\omega_{\sigma}(Z) = \{T_{i}: i\geq 0\}$.

 For each $n\in \mathbb{N}$, we let $k_{n}=\sum_{k=1}^{n-1}k+n=\frac {n(n-1)}{2}+n$. It is clear that $(k_{n})_{n\in \mathbb{N}}$ is an increasing sequence. Observe that $\sigma^{k_{n}}(\mathcal{Z})$ gives the first apparition of the number one after $n$ zeros. Then for each $n\in \mathbb{N}$,   $d(\sigma^{k_{n}}(\mathcal{Z}),\bar{0})<2^{-n}$ and so $\bar{0}\in \omega_{\sigma}(\mathcal{Z})$. Let $i\geq 2$ and suppose that $n\geq i$. Then $\sigma_{k_{n}+(n-i+1)}(\mathcal{Z})$ gives $i-1$ zero before the first apparition of one. Then $d(\sigma^{k_{n}+(n-i+1)}(\mathcal{Z}),T_{i})<2^{-(n+1)}$, so $T_{i}\in \omega_{\sigma}(Z), \forall i\geq2$.  For $i=1$, we have $T_{1} = (100\dots)=\sigma(T_{2})\in \omega_{\sigma}(\mathcal{Z})$. In result, $\{\bar{0}\}\cup \{T_{i},\;i\in\mathbb{N}\}=\{T_{i}: i\geq 0\}\subset \omega_{\sigma}(\mathcal{Z})$. Conversely, let $y=(y_{n})_{n\in\mathbb{N}}\in \omega_{\sigma}(\mathcal{Z})$ and suppose that there exist $i, j \in \mathbb{N} $ such that  $i<j$ and $y_{i}=y_{j}=1$, let  $(n_{s})_{s\geq 0}$ be an increasing sequence of integer be such that $\lim_{s\to +\infty}\sigma^{n_{s}}(\mathcal{Z})=y$. Then there exists $s_{1}\in \mathbb{N}$ such that $\forall s\geq s_{1}$,  we have $n_{s}>k_{j+1}$,  so  $d(\sigma^{n_{s}}(\mathcal{Z}),y)\geq 2^{-j}, \forall s\geq s_{1}$ ( we have only a single one in the first block with length $j$ in $\sigma^{n_{s}}(\mathcal{Z})$  but, we have two ones in the first block with length $j$ in $y$). So every point of $\omega_{\sigma}(\mathcal{Z})$ contains at most only one. Consequently, $\omega_\sigma(\mathcal{Z})=\{T_{i},\;i\in \mathbb{N}\}\cup \{\bar{0}\}=\{T_{i}: i\geq 0\}$.\\
			
		We conclude that $L$ is a countable compact set. We let $g=\sigma_{\mid L}$. Then $g(T_i)= T_{i-1}$, for any $i\geq 1$. Pick a homeomorphic copy of $L$ in $[0,1]\times \{1\}$. For $i\in \mathbb{Z}$, let $I_{i}$ be an arc joining $T_{i}$ and $T_{0}$ so that $S:=\displaystyle\bigcup_{i\in \mathbb{Z}}I_{i}$ is an infinite star centered at $T_{0}$. in particular $I_0= \{T_0\}$.

For each $k\in \mathbb{Z}$, let $i_{k}\in \mathbb{Z}$ such that $d(g^{k}(\mathcal{Z}), S) = d(g^{k}(\mathcal{Z}), I_{i_{k}})$ and let $J_{k}$ be an arc joining $T_{0}$ and $g^{k}(\mathcal{Z})$ which is included in the closed ball $B_F(I_{i_{k}}, d(g^{k}(\mathcal{Z}),I_{i_{k}}))$ of the plan.
We may assume that for any $k\in \mathbb{Z}$, $S\cap J_{k} = \{T_{0}\}$ and for any $k\neq l,\; J_{k}\cap J_{l} = \{T_{0}\}$. Let $D = S \bigcup \big (\displaystyle\bigcup_{k\geq 0}J_{k}\big)$. Clearly $D$ is a countable union of arcs, so by (\cite[Theorem 6, page 286]{Kur}) $D$ is a rational curve which is a dendroid.
			
			$\bullet $ We extend $g$ into a map $f$ on $D$ so that $f(J_{k}) = J_{k+1}$ and $f_{| J_{k}}$ is affine, for each $k\in \mathbb{Z}$ and $f(I_{i})=I_{i-1}$ for $i\geq 1$. Then $f(I_{1})=\{T_{0}\}$ and $f_{\mid L}=g$.
			It is clear that $f$ is a pointwise monotone map on $D$ and onto, thus monotone. Moreover $\textrm{R}(f) \subsetneq \Omega(f)$, since for instance $T_{1}\in \Lambda(f)\subset \Omega(f)$ but $T_{1}\notin \textrm{R}(f)$.}

\end{exe}
\begin{figure}[h!]
	\centering
	\includegraphics[width=12cm, height=10cm]{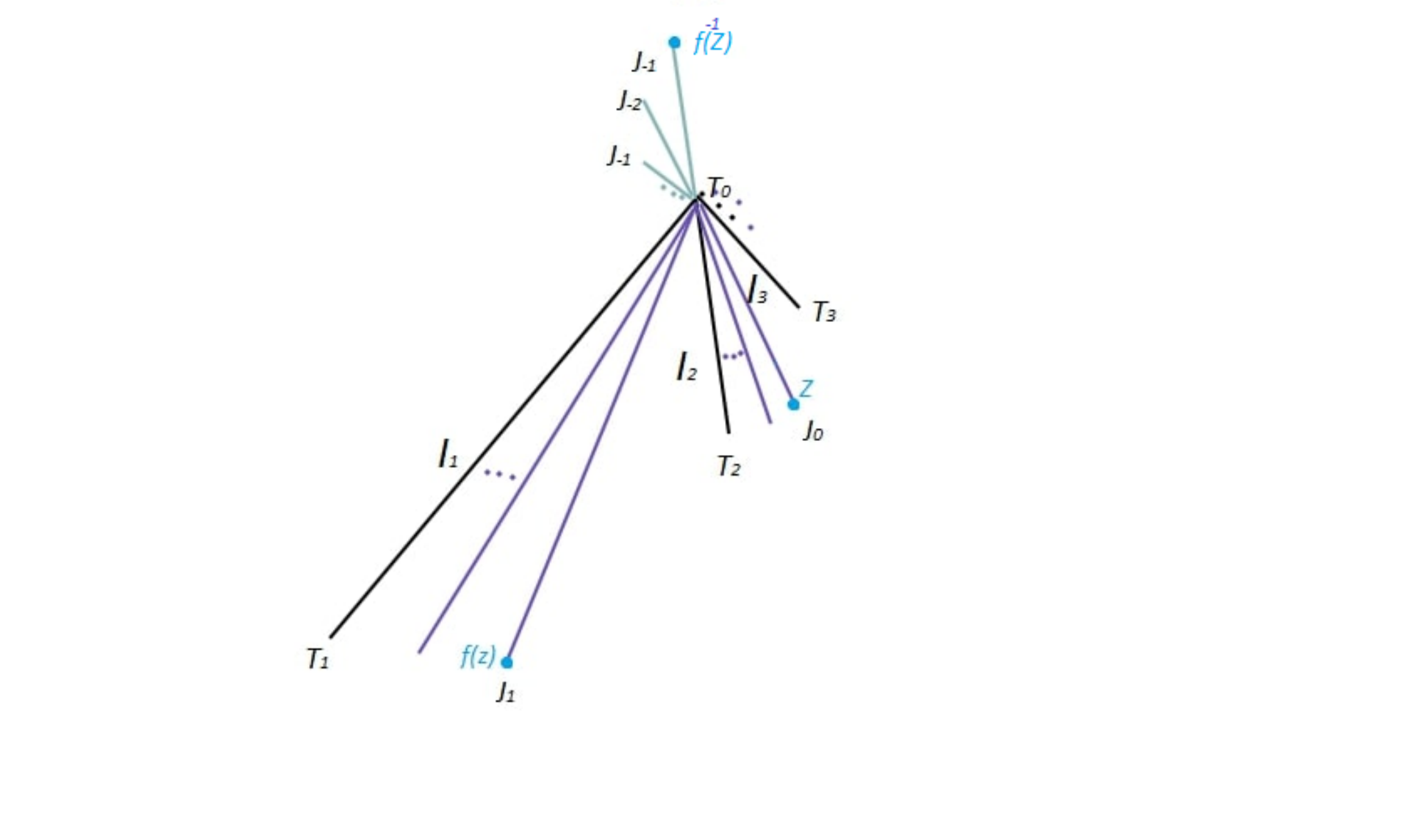}
	\caption{The map $f$ on the dendroid $D$}
\end{figure}

\section{\bf The space of minimal sets with respect to the Hausdorff metric}
 The main result of this section is to prove the following theorem.

\begin{thm}\label{LM}
Let $X$ be a regular curve and $f$ a monotone self mapping of $X$. Then any limit (with respect to the Hausdorff metric) of a sequence of minimal sets $(M_{n})_{n\geq 0}$ is a minimal set.
\end{thm}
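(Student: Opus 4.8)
The plan is to prove directly that $M:=\lim_n M_n$ (in $d_H$) is a single minimal set, using Theorems~\ref{t41} and~\ref{PAMM1} together with the finite‑boundary feature of regular curves. \emph{First}, note that $M$ is nonempty, compact and strongly invariant: minimal sets are strongly invariant, so $f(M_n)=M_n$, and since $2^f$ is a continuous self‑map of $(2^X,d_H)$ we get $f(M_n)=2^f(M_n)\to 2^f(M)=f(M)$, while $f(M_n)=M_n\to M$; hence $f(M)=M$. \emph{Second}, $M$ is a union of minimal sets. Every point of a minimal set is almost periodic, so $M_n\subseteq\mathrm{AP}(f)\subseteq\Omega(f)$; as $\Omega(f)$ is closed, $M\subseteq\overline{\bigcup_n M_n}\subseteq\Omega(f)$, and $\Omega(f)=\mathrm{AP}(f)$ by Theorem~\ref{t41}. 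Thus each $x\in M$ lies in $\omega_f(x)$, which by Theorem~\ref{PAMM1}(1) is a minimal set and, $M$ being closed and invariant, is contained in $M$. Since two minimal sets are equal or disjoint, $M$ is the disjoint union of the minimal sets $\{\omega_f(x):x\in M\}$, and it remains to show there is exactly one of them.

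\emph{Arc‑crossing mechanism.} If only finitely many $M_n$ are distinct, then $M=M_n$ for infinitely many $n$ and we are done; so we may assume, after passing to a subsequence, that the $M_n$ are pairwise distinct, hence pairwise disjoint. Suppose toward a contradiction that $M$ contains distinct minimal sets $N_1\neq N_2$, and set $\delta=d(N_1,N_2)>0$. Fix $a\in N_1$, $b\in N_2$, and choose $a_n,b_n\in M_n$ with $a_n\to a$, $b_n\to b$ (possible since $N_1,N_2\subseteq M=\lim M_n$). By minimality of $M_n$ the orbit of $a_n$ is dense in $M_n\ni b_n$, so there is $k_n\geq 1$ with $d(f^{k_n}(a_n),b_n)<1/n$; then $f^{k_n}(a_n)\to b$, and since $\mathrm{Orb}_f(a)\subseteq N_1$ stays at distance $\geq\delta$ from $b$ one checks $k_n\to+\infty$. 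As a regular curve is locally arcwise connected, pick arcs $I_n$ from $a_n$ to $a$ with $\mathrm{diam}(I_n)\to 0$. The continuum $f^{k_n}(I_n)$ contains $f^{k_n}(a_n)$ (eventually in $B(b,\delta/3)$) and $f^{k_n}(a)\in N_1$ (at distance $\geq\delta$ from $b$). Choosing a neighborhood $W$ of $b$ with finite boundary and $\overline{W}\subseteq B(b,\delta/3)$, connectedness forces $f^{k_n}(I_n)$ to meet $\partial W$ for large $n$; as $\partial W$ is finite, some $c\in\partial W$ satisfies $f^{k_n}(d_n)=c$ for infinitely many $n$ with $d_n\in I_n$. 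Then $d_n\to a$ and $k_n\to+\infty$, so $a\in\alpha_f(c)$; if $c\notin\mathrm{P}(f)$, Theorem~\ref{PAMM1}(2) gives that $\alpha_f(c)$ is a minimal set containing $a$, hence $\alpha_f(c)=N_1$, while $c$ may be taken arbitrarily close to $b\in N_2$.

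\emph{The main obstacle} is to convert this conclusion into a contradiction with $N_1\cap N_2=\emptyset$, i.e. to rule out a ``jump'' of $\alpha_f$ near $b$; I expect this to be the delicate step. The plan here: shrinking $W$ gives points $c_j\to b$ with $\alpha_f(c_j)=N_1$; for each $j$ pull $c_j$ back along a branch of its negative orbit to $e_j$ with $d(e_j,a)<1/j$ and $f^{m_j}(e_j)=c_j$, $m_j\to+\infty$, join $e_j$ to $a$ by an arc $J_j$ with $\mathrm{diam}(J_j)\to 0$, and study the continua $f^{m_j}(J_j)$, which join a point near $b\in N_2$ to $f^{m_j}(a)\in N_1$ and so have diameter $\geq\delta/2$. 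The finite‑boundary property of $X$ together with Proposition~\ref{Fs} (pairwise disjoint subcontinua of a regular curve form a null family) should force suitable partial iterates of these arcs to constitute a null family, contradicting the requirement that they stretch across the gap between $N_1$ and a neighborhood of $b$; this is exactly where the regular‑curve hypothesis enters essentially.

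\emph{Residual cases.} The case $c\in\mathrm{P}(f)$ can occur only if $N_2\subseteq\overline{\mathrm{P}(f)}$, so that $b$ is a limit of periodic points of unbounded period; it is then handled by running the same arc‑crossing scheme with the maps $f^{-\mathrm{Per}(x_m)}$, exactly as in Case~2 of the proof of Theorem~\ref{t41}. Finally, when $M$ itself is finite the argument simplifies: $f$ permutes the points of $M$, continuity of $f$ forces it, for $n$ large, to permute accordingly the clusters of $M$ approximated by the $M_n$, and minimality of each $M_n$ forces this permutation to be a single cycle; hence $M$ is one periodic orbit, and in particular minimal.
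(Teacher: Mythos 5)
Your first two steps (strong invariance of $M$ and the decomposition of $M$ into minimal sets via Theorem \ref{t41} and Theorem \ref{PAMM1}(1)) are correct and coincide with the paper's, and your treatment of the case where $M$ is finite is fine. But the heart of the theorem is the infinite case, and there your argument has a genuine gap which you yourself flag: the arc--crossing step only produces boundary points $c$ (or $c_j\to b$) with $a\in\alpha_f(c)$, hence $\alpha_f(c)=N_1$ when $c\notin\mathrm{P}(f)$ --- and this conclusion is not by itself contradictory. A point arbitrarily close to $b\in N_2$ can perfectly well have $\alpha$-limit set $N_1$ (nothing in Theorems \ref{t41} or \ref{PAMM1} forbids it; note that Theorem \ref{WACC} on continuity of $\alpha_f$ cannot be invoked, since its proof uses Theorem \ref{LM}). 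Your proposed repair --- pulling the $c_j$ back to points $e_j$ near $a$, iterating small arcs $J_j$ forward and appealing to Proposition \ref{Fs} --- does not work as stated, because Proposition \ref{Fs} needs a family of \emph{pairwise disjoint} subcontinua, and there is no reason the forward images $f^{m_j}(J_j)$ (or any ``suitable partial iterates'' of them) are disjoint; no mechanism for disjointness is given. The ingredient you are missing is exactly what the paper uses: for an infinite minimal set $P\subset M$ the basin $\mathcal{B}(P)$ is \emph{closed} (Theorem \ref{PAMM1}(3)), so one can take the finite-boundary open set $U$ around $\mathcal{B}(P)$ (not around a point of the other minimal set) and run the arcs \emph{backwards}: a sequence of arcs $I_n\to\{y\}$ with $y\in P$, $y_n\in M_n$, and times $s_n$ with $f^{-s_n}(y_n)$ near $z_n\in N$, forces $f^{-s_n}(I_n)$ (connected, by monotonicity) to cross $\partial U$, and the resulting boundary point $z$ satisfies $y\in\omega_f(z)$, hence $\omega_f(z)=P$ and $z\in\mathcal{B}(P)\subset U$, contradicting $z\in\partial U$. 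Your forward-iteration variant yields an $\alpha$-limit statement instead of an $\omega$-limit one, and so never reaches the basin, which is where the contradiction lives.

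Your ``residual case'' is also not an argument. When every minimal subset of $M$ is a periodic orbit (which is precisely the situation the basin argument leaves open, and which the paper isolates as the claim $M\subset\mathrm{P}(f)$), the paper needs a genuinely different proof: weak incompressibility passes to the Hausdorff limit (\cite[Proposition 3.1]{FWCP}), Lemma \ref{l27} excludes isolated points so $M$ would be uncountable, a Baire/pigeonhole argument produces infinitely many points of a common period $N$ in $M$ with pairwise disjoint orbits accumulating in $M$, and a final arc-crossing produces infinitely many distinct periodic orbits realized as $\omega_f(b_n)$ with $b_n$ in one fixed finite boundary --- a contradiction. Saying this case ``is handled by running the same arc-crossing scheme with the maps $f^{-\mathrm{Per}(x_m)}$, exactly as in Case 2 of Theorem \ref{t41}'' does not engage with any of these difficulties (in particular, there is no single minimal $\alpha$-limit set to exploit when the crossing point is periodic), so this part of the proof is missing as well.
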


 \begin{proof}
 Let $(M_{n})_{n\geq 0}$ be a sequence a sequence of minimal sets converging to some $M\subset X$. Then $f(M)=M$. Since each $M_{n}\subset \textrm{R}(f)$, so by Theorem \ref{t41}, $M\subset \textrm{R}(f)=\textrm{AP}(f)$. Therefore $M=\displaystyle\bigcup_{i\in I}M_{i}$, where $M_{i}$ is a minimal set for any $i\in I$. Assume that $M$ is not a minimal set. Then $\textrm{card}(I)>1$.\\

 \textit{Claim}: $M\subset \textrm{P}(f)$. \ Suppose that $M\nsubseteq \textrm{P}(f)$, so there exists $i_{0}\in I$ such that $M_{i_{0}}$ is infinite. Set $P= M_{i_{0}}$ and $N=M_{j_{0}}$, where $j_{0}\neq i_{0}$. Then $P$ and $N$ are disjoint minimal sets and so does $\mathcal{B}(P)$ and $N$. By Theorem \ref{PAMM1}, $\mathcal{B}(P)$ is a closed set. So let $U$ an open set with finite boundary $\partial U$ of cardinality $k$ such that $\mathcal{B}(P)\subset U$ and $N\cap \overline{U}=\emptyset$.\\
 Recall that $P \cup N\subset M=\displaystyle\lim_{n\to +\infty}M_{n}$, so fix some $y\in P,\; z\in N$ and let $y_{n}, \ z_{n}\in M_{n}$ such that $(y_{n})_{n\geq 0}$ (resp.  $(z_{n})_{n\geq 0}$) converges to $y$ (resp. $z$). Let $(I_{n})_{n\geq 0}$ be a sequence of arcs joining $y$ and $y_{n}$ so that $\{y\}=\displaystyle\lim_{n\to +\infty}I_{n}$.\\
 Since $M_{n}$ is a minimal set, we can find $s_{n}\geq 0$ such that $d(f^{-s_{n}}(y_{n}),z_{n})\leq \frac{1}{n}$, for every $n\geq 1$. Hence for $n$ large enough, $f^{-s_{n}}(y_{n}) \nsubseteq U$ and $f^{-s_{n}}(I_{n}) \nsubseteq U$. On the other hand, $f^{-s_{n}}(I_{n})$ meets $P$ at some point of $f^{-s_{n}}(y)$. As $f$ is monotone, thus for $n$ large enough, $f^{-s_{n}}(I_{n})\cap \partial U\neq \emptyset$. As $\partial U$ is finite, then there exists $z\in \partial U$ such that $z\in f^{-s_{n}}(I_{n})$, for infinitely many $n$. Hence $y\in \omega_{f}(z)$ (since $d(f^{s_n}(z),y)\leq \textrm{diam}(I_n) + d(y_n, y)$). By minimality of $\omega_{f}(z)$ (see Theorem \ref{PAMM1}), we get $\omega_{f}(z)=P$. Hence $z\in \mathcal{B}(P) \cap \partial U$. A contradiction with $\mathcal{B}(P) \subset U$. The proof is complete.\\

 \textit{ Proof of Theorem \ref{LM}}. As proven by the claim above, we have $M\subset \textrm{P}(f)$. Since any minimal set $M_{n}$ has the weakly incompressibility property (Lemma \ref{wc}), so does its limit $M$ by \cite[Proposition 3.1]{FWCP}. Since $M\subset \textrm{P}(f)$, it is uncountable, otherwise, $M$ will have an isolated point $a$, so by Lemma \ref{l27}, $M=O_{f}(a)$ is a periodic orbit. A contradiction. From $M= \displaystyle \cup_{n\in \mathbb{N}}\textrm{Fix}(f^{n})\cap M$, there exist $N>0$ and an infinite sequence $(z_{n})_{n\geq 0}$ in Fix$(f^{N})\cap M$ with disjoint orbits converging to some $z\in \textrm{Fix}(f^{N})\cap M$. As $O_{f}(z_{0})\cap O_{f}(z)=\emptyset$, there exists an open set $V_{z}$ with finite boundary such that $O_{f}(z)\subset V_{z}$ and $O_{f}(z_{0})\cap \overline{V_{z}} = \emptyset$.
Since $(z_{n})_{n\geq 0}$ converges to $z$, there is some $p\in \mathbb{N}$ such that for any $n\geq p,\; O_{f}(z_{n})\subset V_{z}$. For each $n\geq p$, let $(I_{n,q})_{q\geq 0}$ be a sequence of arcs joining $z_{n}$ and $z_{n,q}\in M_{q}$ so that $(I_{n,q})_{q\geq 0}$ converges to $\{z_{n}\}$  as done in the proof of the claim above. As $M_{q}$ is minimal and $M_{q} \nsubseteq U$, we can find $s_{q}>0$ such that $f^{-s_{q}}(z_{n,q}) \nsubseteq U$. Recall that $O_{f}(z_{n})\subset U$, so $f^{-s_{q}}(I_{n,q})\cap \partial U \neq \emptyset$, for any $q\geq 0$. By the same argument as in the proof of the claim above, we can find, for each $n\geq p$, $b_{n}\in \partial U$ such that $\omega_{f}(b_{n})=O_{f}(z_{n})$. This leads to a contradiction since $\{O_{f}(z_{n}) : n\geq p\}$ is an infinite family of disjoint periodic orbits.
 \end{proof}

\section{\bf On the continuity of limit maps $\omega_{f}$ and $\alpha_{f}$}
 In this section, we shall investigate the continuity of the limit maps: \\ $\omega_{f}: X \to 2^{X}; ~x\to \omega_{f}(x)$ and $\alpha_{f}: X_{\infty}\to 2^{X}; ~x\to \alpha_{f}(x)$.

\begin{thm}\label{WACC}
The maps $\omega_{f}$ and $\alpha_{f}$ are continuous everywhere except may be at the periodic points of $f$.
\end{thm}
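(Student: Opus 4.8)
The plan is to prove the two statements separately: $\omega_{f}$ is continuous at every $x_{0}\in X\setminus\textrm{P}(f)$, and $\alpha_{f}$ is continuous at every $x_{0}\in X_{\infty}\setminus\textrm{P}(f)$. Fix such an $x_{0}$ and a sequence $x_{n}\to x_{0}$ (with $x_{n}\in X_{\infty}$ in the case of $\alpha_{f}$). Since $(2^{X},d_{H})$ is compact, it is enough to show that along any subsequence for which $\omega_{f}(x_{n})$ (resp. $\alpha_{f}(x_{n})$) converges, the limit equals $M:=\omega_{f}(x_{0})$ (resp. $M:=\alpha_{f}(x_{0})$), which is a minimal set by Theorem \ref{PAMM1}. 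Pass to such a subsequence and let $L$ be the limit. Each $\omega_{f}(x_{n})$ is minimal, and each $\alpha_{f}(x_{n})$ is minimal whenever $x_{n}\notin\textrm{P}(f)$ (Theorem \ref{PAMM1}); since $x_{n}\to x_{0}\notin\textrm{P}(f)$, any periodic $x_{n}$ must have period tending to $+\infty$, and I would treat that sub-case separately by the method of Case~2 of the proof of Theorem \ref{t41}. So assume the $x_{n}$ are non-periodic; then $L$ is minimal by Theorem \ref{LM}. As two minimal sets that meet coincide, it suffices to reach a contradiction from $L\cap M=\emptyset$.

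\textbf{Choice of the separating neighbourhood.} Imitating the proof of Theorem \ref{LM}, the key is to separate $M$ from $L$ by an open set with finite boundary which, in addition, absorbs the relevant basin. When $M$ is infinite, $\mathcal{B}(M)$ is closed (Theorem \ref{PAMM1}) and disjoint from $L$ (every point of $L$ has $\omega$-limit $L\neq M$), so choose an open $U\supset\mathcal{B}(M)$ with finite boundary $\partial U$ and $\overline{U}\cap L=\emptyset$; when $M$ is a periodic orbit, simply take a small open $U\supset M$ with finite boundary and $\overline{U}\cap L=\emptyset$. In all cases $M\subset U$. Using local arcwise connectedness of $X$, pick arcs $I_{n}$ joining $x_{0}$ to $x_{n}$ with $I_{n}\to\{x_{0}\}$ in the Hausdorff metric.

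\textbf{The case of $\alpha_{f}$.} Since $\alpha_{f}(x_{0})=M\subset U$ and $\alpha_{f}(x_{0})=\bigcap_{k}\overline{\bigcup_{m\ge k}f^{-m}(x_{0})}$ is a decreasing intersection of compacta, there is $k_{0}$ with $f^{-m}(x_{0})\subset U$ for all $m\ge k_{0}$; and since $\alpha_{f}(x_{n})\to L$ with $\overline{U}\cap L=\emptyset$, for $n$ large $\alpha_{f}(x_{n})\cap\overline{U}=\emptyset$, hence $f^{-m}(x_{n})\cap\overline{U}=\emptyset$ for all $m\ge N_{n}$, some $N_{n}$. Fix such an $n$ and set $m_{n}=\max(k_{0},N_{n})$. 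As $f$, hence $f^{m_{n}}$, is monotone, $f^{-m_{n}}(I_{n})$ is connected; it meets $U$ (in $f^{-m_{n}}(x_{0})$) and $X\setminus\overline{U}$ (in $f^{-m_{n}}(x_{n})$), so it meets $\partial U$. Since $\partial U$ is finite, passing to a subsequence there is a fixed $b\in\partial U$ with $f^{m_{n}}(b)\in I_{n}$; thus $f^{m_{n}}(b)\to x_{0}$, and $m_{n}\to+\infty$ (otherwise $b$ would be a preimage of $x_{0}$ of order $\ge k_{0}$, forcing $b\in U$, impossible). Hence $x_{0}\in\omega_{f}(b)$; since $\omega_{f}(b)$ is minimal (Theorem \ref{PAMM1}), $x_{0}\in\textrm{R}(f)=\Omega(f)$ (Theorem \ref{t41}) and $\omega_{f}(b)=\omega_{f}(x_{0})$. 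As $x_{0}\in\Omega(f)\setminus\textrm{P}(f)$, the corollary following Theorem \ref{RR} gives $\omega_{f}(x_{0})=\alpha_{f}(x_{0})=M$, so $\omega_{f}(b)=M$. If $M$ is a periodic orbit, this yields $x_{0}\in\omega_{f}(b)=M\subset\textrm{P}(f)$, contradicting $x_{0}\notin\textrm{P}(f)$; if $M$ is infinite, it yields $b\in\mathcal{B}(M)\subset U$, contradicting $b\in\partial U$.

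\textbf{The case of $\omega_{f}$, and the main obstacle.} Run the symmetric argument with forward iterates: for $n$ large the forward orbit of $x_{0}$ stays in $U$ (it lies in $\mathcal{B}(M)$ when $M$ is infinite; when $M$ is a periodic orbit one uses instead that $\omega_{f}(x_{0})=M\subset U$ and shrinks $U$ so that $x_{0}\notin\overline{U}$), while $f^{t}(x_{n})\notin\overline{U}$ for $t$ large, so for a suitable $t_{n}\to+\infty$ the connected set $f^{t_{n}}(I_{n})$ crosses $\partial U$, producing $c_{n}\in I_{n}$ with $f^{t_{n}}(c_{n})=b\in\partial U$ and $c_{n}\to x_{0}$; hence $x_{0}\in\alpha_{f}(b)$. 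If $b\notin\textrm{P}(f)$, then $b\in X_{\infty}$ (as $\alpha_{f}(b)\neq\emptyset$) and $\alpha_{f}(b)$ is minimal (Theorem \ref{PAMM1}); containing $x_{0}$, it contains $\omega_{f}(x_{0})=M$, so $\alpha_{f}(b)=M$, $x_{0}\in M=\omega_{f}(x_{0})$, whence $x_{0}\in\Omega(f)\setminus\textrm{P}(f)$ with $\alpha_{f}(x_{0})=\omega_{f}(x_{0})=M$ (corollary to Theorem \ref{RR}), and $M$ is infinite since $x_{0}\in M\setminus\textrm{P}(f)$. Now $b\notin U$ and $\mathcal{B}(M)\subset U$, so $\omega_{f}(b)\neq M$; but $\alpha_{f}(b)=M$ is infinite, so by Theorem \ref{PAMM1} (the implication ``$\omega_{f}(b)$ infinite $\Rightarrow\alpha_{f}(b)=\omega_{f}(b)$'') $\omega_{f}(b)$ cannot be infinite, hence is a periodic orbit; one excludes this ``heteroclinic-to-periodic'' configuration by passing to $f^{p}$ ($p$ the period) and again invoking the structure of $\alpha$- and $\omega$-limit sets in Theorem \ref{PAMM1}. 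I expect \emph{the genuine difficulty} to be exactly these last reductions --- controlling $\alpha_{f}(b)$ when the crossing point $b$ is periodic, and ruling out heteroclinic points whose backward limit is an infinite minimal set and whose forward limit is a periodic orbit --- together with the periodic-$x_{n}$ sub-case; each should be dispatched by rerunning the boundary-crossing mechanism exactly as in Case~2 of the proof of Theorem \ref{t41}, and this is precisely where the hypothesis $x_{0}\notin\textrm{P}(f)$ is used, through Theorems \ref{t41} and \ref{RR}. The routine part of the proof is the arc/boundary machinery; the delicate part is its interaction with the periodic points.
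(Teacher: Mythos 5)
Your treatment of $\alpha_{f}$ at a non-periodic $x_{0}$ (in the sub-case where the approximating points $x_{n}$ are non-periodic) is correct and in fact cleaner than the paper's: using the closedness of $\mathcal{B}(M)$ from Theorem \ref{PAMM1} to choose the separating neighbourhood $U$, pulling back the arcs $I_{n}$ by $f^{-m_{n}}$, and then splitting on whether $M$ is infinite or a periodic orbit does close that case, where the paper instead builds $k+1$ pairwise disjoint arcs at the points $f^{j}(x_{0})$. But the two places you yourself flag as ``the genuine difficulty'' are exactly where the proof is missing, and the pointers you give do not fill them. First, in the $\omega_{f}$ case the crossing point $b\in\partial U$ may be periodic; then $\alpha_{f}(b)$ need not be minimal and none of Theorems \ref{PAMM1}, \ref{t41}, \ref{RR} controls it. ``Rerunning the boundary-crossing mechanism as in Case~2 of Theorem \ref{t41}'' cannot work here: that mechanism only produces the conclusion $x_{0}\in\mathrm{R}(f)$, which was a contradiction in Lemma \ref{WAC} (where $x_{0}\notin\Omega(f)$) and in Theorem \ref{t41}, but is no contradiction at all at the points $x_{0}\in\Omega(f)\setminus\mathrm{P}(f)$ that constitute the heart of Theorem \ref{WACC}. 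Second, even when $b\notin\mathrm{P}(f)$, the ``heteroclinic'' configuration $\alpha_{f}(b)=M$ infinite minimal and $\omega_{f}(b)$ a periodic orbit is not excluded by anything you cite: Theorem \ref{PAMM1}(4) only gives $\omega_{f}$ infinite $\Rightarrow\alpha_{f}=\omega_{f}$, not the converse, and ``passing to $f^{p}$'' changes nothing in that statement. So the $\omega_{f}$ half of the theorem is not proved.

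For comparison, the paper's proof at $x_{0}\in\Omega(f)\setminus\mathrm{P}(f)$ uses a genuinely different mechanism designed precisely for these cases: since $O_{f}(x_{0})$ is infinite, one takes $k+1=\mathrm{card}(\partial U_{L})+1$ pairwise disjoint arcs attached at the distinct points $f^{j}(x_{0})$; pushing them forward into $U_{L}$ forces two of them to meet $\partial U_{L}$ at a \emph{common} point $z$, so $\mathrm{diam}\,f^{-m_{n}}(z)$ is bounded below along a subsequence, and the null-family property of regular curves then forces $z\in\mathrm{P}(f)$ with $M\subset\alpha_{f}(z)$ (Claim~1 of the paper). Repeating this near $L$ yields infinitely many periodic points $z_{n}$ with pairwise disjoint orbits whose preimage continua $f^{-s_{n}}(z_{n})$ stretch from near $L$ to near $M$; these are pairwise disjoint subcontinua of uniformly large diameter, contradicting finite Suslinean-ness. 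A similar disjointness device (Lemma \ref{Orbit periodique d} applied to the backward saturations $S_{x_{n}}$, together with Lemma \ref{COMEGA}) is what the paper uses for the periodic-$x_{n}$ sub-case of $\alpha_{f}$, which you also deferred; that deferral is minor, but the $\omega_{f}$ argument above is an essential missing idea, not a routine reduction.
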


We use the following lemmas.

\begin{lem} $($\cite[Lemma 4.2]{Ay3}$)$ \label{az}
Let $X$ be a hereditarily locally connected continuum, $F\subsetneq X$ a closed subset and $(O_{n})_{n\geq 0}$ a sequence of open subsets of $X$ such that $F = \displaystyle\bigcap_{n\geq 0}\overline{O_{n}}$. Then $\displaystyle\lim_{n\rightarrow +\infty} \mathrm{Mesh}(O_{n}\setminus F)=0$.
\end{lem}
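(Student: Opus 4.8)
The approach is to argue by contradiction: a component of $O_n\setminus F$ that fails to shrink would yield a non-degenerate \emph{continuum of convergence}, which a hereditarily locally connected continuum cannot contain (recall that such a continuum is, in particular, a Peano continuum).

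So suppose $\mathrm{Mesh}(O_n\setminus F)\not\to 0$: there is $\varepsilon>0$ with $\mathrm{Mesh}(O_n\setminus F)>\varepsilon$ for all $n$ in some infinite set $S$. For each $n\in S$ the definition of $\mathrm{Mesh}$ furnishes a connected component $C_n$ of the open set $O_n\setminus F$ with $\mathrm{diam}(C_n)>\varepsilon$; as a component of an open subset of the Peano continuum $X$, $C_n$ is open and arcwise connected, so I pick $p_n,q_n\in C_n$ with $d(p_n,q_n)>\varepsilon$ and an arc $A_n\subseteq C_n$ joining them. Then $A_n\in C(X)$, $\mathrm{diam}(A_n)\ge\varepsilon$, and, crucially, $A_n\cap F=\emptyset$. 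By compactness of $(C(X),d_H)$, after passing to an infinite subset of $S$ I may assume $A_n\to K$ for some $K\in C(X)$; since $\mathrm{diam}$ is $d_H$-continuous, $\mathrm{diam}(K)\ge\varepsilon>0$, so $K$ is non-degenerate.

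The heart of the matter is to show $K\subseteq F$, and this is where the hypotheses must be used: one needs the closures $\overline{O_n}$ to shrink to $F$ in the Hausdorff metric — the situation of the lemma, where $(O_n)$ is the decreasing sequence one starts from, so that $F=\bigcap_n \overline{O_n}$ is the Hausdorff limit of the $\overline{O_n}$. Granting $\overline{O_n}\subseteq\overline{O_m}$ whenever $n\ge m$: fix $m$; for all large $n$ in our index set, $A_n\subseteq C_n\subseteq O_n\subseteq\overline{O_m}$, and as $\overline{O_m}$ is closed, $K=\lim_n A_n\subseteq\overline{O_m}$. Letting $m\to\infty$ gives $K\subseteq\bigcap_m \overline{O_m}=F$. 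Now the contradiction is immediate: $K\subseteq F$ is non-degenerate while every $A_n$ is disjoint from $F$, hence $A_n\cap K=\emptyset$ for all $n$ and $A_n\to K$; so $K$ is a non-degenerate continuum of convergence in $X$, contradicting the classical characterization of hereditary local connectedness (see \cite{Kur}).

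I expect the only genuine obstacle to be this localization of $K$ inside $F$: a component of $O_n\setminus F$ avoids $F$, but its closure may abut $F$, so a priori nothing prevents $K$ from escaping $F$ — and without the closures $\overline{O_n}$ actually collapsing onto $F$ the statement is false (e.g.\ on $[0,1]$ take $O_n=[0,1]$ for all but finitely many $n$, so $F=\bigcap\overline{O_n}$ is small yet $\mathrm{Mesh}(O_n\setminus F)=1$ infinitely often). Once $K\subseteq F$ is in hand, the rest is routine: arcwise connectedness of connected open subsets of a Peano continuum, $d_H$-continuity of $\mathrm{diam}$ on $C(X)$, and the fact that hlc continua carry no non-degenerate continuum of convergence.
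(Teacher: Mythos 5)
This lemma is not proved in the paper at all: it is imported verbatim from \cite[Lemma 4.2]{Ay3}, so there is no internal argument to compare yours with. Judged on its own terms, your proof is correct and is the natural one: if $\mathrm{Mesh}(O_n\setminus F)\not\to 0$, pick components $C_n$ of the open sets $O_n\setminus F$ of diameter exceeding $\varepsilon$, use local connectedness (an hlc continuum is in particular a Peano continuum, so these components are open and arcwise connected) to replace them by arcs $A_n\subseteq C_n$ with $\mathrm{diam}(A_n)\geq\varepsilon$ and $A_n\cap F=\emptyset$, pass to a Hausdorff limit $K$ using compactness of $(C(X),d_H)$ and continuity of $\mathrm{diam}$, show $K\subseteq F$, and conclude that $K$ is a nondegenerate continuum of convergence, which a hereditarily locally connected continuum cannot contain (Kuratowski/Whyburn).

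Two remarks. First, your point about the hypotheses is well taken: as transcribed, nothing forces $\overline{O_n}$ to collapse onto $F$, and your example on $[0,1]$ (one small exceptional $O_{n_0}$, all other $O_n=X$) does refute the literal statement, since the mesh then stays bounded away from zero. The situation actually used in this paper — the lemma is applied only to the decreasing neighborhoods $B(\Omega(f),1/n)$ of $\Omega(f)$ in Theorems \ref{S=OSS} and \ref{WA} — is the decreasing one, which is exactly the assumption you restore; under it your key step $K\subseteq\bigcap_m\overline{O_m}=F$ is carried out correctly, so this is a transcription gap in the statement rather than a defect of your argument. Second, a small point at the very end: some formulations of ``continuum of convergence'' require the approximating continua to be pairwise disjoint, not merely disjoint from the limit. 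This costs nothing here: since each $A_n$ is compact and $A_n\cap K=\emptyset$, we have $d(A_n,K)>0$, so choosing indices recursively with $d_H\big(A_{n_{k+1}},K\big)<\frac{1}{2}\min_{j\leq k} d\big(A_{n_j},K\big)$ produces a pairwise disjoint subsequence of arcs still converging to $K$; with this one-line refinement the contradiction with hereditary local connectedness is exactly the classical theorem cited in \cite{Kur}.
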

	
 \begin{lem}\label{COMEGA} The restriction map $(\omega_{f})_{| \Omega(f)}$ is continuous.
 \end{lem}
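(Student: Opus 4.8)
The statement is that $(\omega_f)_{|\Omega(f)}$ is continuous. The plan is to argue by contradiction and exploit the rigidity of $\omega$-limit sets on regular curves coming from Theorem~\ref{PAMM1} and Theorem~\ref{t41}: on $\Omega(f)$ every point $x$ is almost periodic and $\omega_f(x)$ is the unique minimal set containing $x$. So suppose there is a point $x\in\Omega(f)$ and a sequence $(x_n)_{n\geq 0}\subset\Omega(f)$ with $x_n\to x$ but $\omega_f(x_n)\not\to\omega_f(x)$ in the Hausdorff metric. Passing to a subsequence, by compactness of $(2^X,d_H)$ we may assume $\omega_f(x_n)\to M$ for some closed set $M\subset X$ with $M\neq\omega_f(x)$. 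By Theorem~\ref{LM}, $M$ is a minimal set. Since $x_n\in\omega_f(x_n)$ for all $n$ (each $x_n$ is recurrent by Theorem~\ref{t41}) and $x_n\to x$, we get $x\in M$. But then $M$ is a minimal set containing the point $x\in\Omega(f)=\mathrm{AP}(f)$, and $\omega_f(x)$ is also a minimal set containing $x$; two minimal sets that share a point coincide, so $M=\omega_f(x)$, contradicting $M\neq\omega_f(x)$.

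\textbf{Key steps in order.} First, record that on $\Omega(f)$ we have $x\in\omega_f(x)$ for every $x$, via Theorem~\ref{t41} ($\Omega(f)=\mathrm{R}(f)$). Second, reduce continuity at $x$ to: every convergent subsequence $\omega_f(x_{n_k})\to M$ has limit $M=\omega_f(x)$; this is the standard trick using that $(2^X,d_H)$ is compact, so $(\omega_f)_{|\Omega(f)}$ is continuous at $x$ iff every Hausdorff-limit point of $(\omega_f(x_n))$ equals $\omega_f(x)$ whenever $x_n\to x$. Third, apply Theorem~\ref{LM} to identify any such $M$ as a minimal set. Fourth, use $x_{n_k}\to x$ together with $x_{n_k}\in\omega_f(x_{n_k})$ and Hausdorff convergence to conclude $x\in M$ (the point $x$ is a limit of points lying in the sets $\omega_f(x_{n_k})$). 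Fifth, invoke that $x$ is almost periodic with $\omega_f(x)$ minimal (Theorem~\ref{PAMM1}(1) and Theorem~\ref{t41}), and that distinct minimal sets are disjoint, to force $M=\omega_f(x)$.

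\textbf{Main obstacle.} The only genuinely substantive ingredient is Theorem~\ref{LM} (the Hausdorff limit of minimal sets is minimal), which is already established in the excerpt and may be cited freely; without it one could not rule out that $M$ is a nontrivial union of minimal sets, in which case $x\in M$ would not immediately yield $M=\omega_f(x)$. The remaining delicate point is the elementary fact that if $x_{n_k}\to x$, $y_{n_k}\to y$ with $y_{n_k}\in\omega_f(x_{n_k})$ and $\omega_f(x_{n_k})\to M$ in $d_H$, then $y\in M$; specializing to $y_{n_k}=x_{n_k}$ (legitimate since $x_{n_k}\in\omega_f(x_{n_k})$) gives $x\in M$. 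This is a routine consequence of the definition of the Hausdorff metric: $d(x,M)\leq d(x,x_{n_k})+d(x_{n_k},M)\leq d(x,x_{n_k})+d_H(\omega_f(x_{n_k}),M)\to 0$. Hence $d(x,M)=0$ and $M$ closed gives $x\in M$. With these pieces assembled the proof is complete; I expect it to be short.
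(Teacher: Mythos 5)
Your proof is correct and follows essentially the same route as the paper: use $\Omega(f)=\mathrm{R}(f)$ to get $x_n\in\omega_f(x_n)$, apply Theorem~\ref{LM} to see that any Hausdorff-limit point $M$ of $(\omega_f(x_n))$ is minimal, note $x\in M$, and conclude $M=\omega_f(x)$ since minimal sets containing a common point coincide. Your version merely spells out the compactness-of-$2^X$ subsequence reduction and the Hausdorff-metric estimate giving $x\in M$, which the paper leaves implicit.
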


 \begin{proof}
 Let $(x_{n})_{n\geq 0}$ a sequence of $\Omega(f)$ converging to some $x\in \Omega(f)$. By Theorem \ref{t41}, $x_{n}\in \omega_{f}(x_{n})$ and by Theorem \ref{PAMM1}, $(\omega_{f}(x_{n}))_{n\geq 0}$ is a sequence of minimal set, thus by Theorem \ref{LM}, any limit point of that sequence should be a minimal set. Observe that $x$ belongs to any limit point of the sequence $(\omega_{f}(x_{n}))_{n\geq 0}$, therefore any limit point of the sequence $(\omega_{f}(x_{n}))_{n\geq 0}$ is a minimal set that contains $x$, therefore it has to be $\omega_{f}(x)$.\\
 \end{proof}
 \medskip

 Let $(X, f)$ be a dynamical system. For $x\in X$, we denote by \\ $S_{x} = \underset{i\geq 0}\cup f^{-i}(x)$.

 \begin{lem}\label{Orbit periodique d} Let $x, y\in X$ such that $O_{f}(x)$ and $O_{f}(y)$ are two disjoint orbits. Then $S_{x}$ and $S_{y}$ are disjoint.
 \end{lem}

 \begin{proof}
 	Assume that $S_{x}\cap S_{y}\neq \emptyset$. Then we can find two positive integers $m\leq n$ and some $z\in f^{-m}(x)\cap f^{-n}(y)$. Therefore $f^{n}(z)\in O_{f}(x)\cap O_{f}(y)$, which will leads to a contradiction.
 \end{proof}
 \medskip

\begin{lem}\label{WAC}
Let $X$ be a regular curve and $f$ a monotone self mapping of $X$. The limit maps $\omega_{f}$ and $\alpha_{f}$ are continuous at any point of $X\setminus \Omega(f)$.
\end{lem}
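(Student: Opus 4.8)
\emph{Set-up.} Since $\Omega(f)$ is closed, $X\setminus\Omega(f)$ is open; fix $x\in X\setminus\Omega(f)$ (and $x\in X_{\infty}$ for the statement about $\alpha_f$). The plan is to pick an open neighbourhood $U$ of $x$ with finite boundary, contained in a wandering neighbourhood of $x$ and with $\overline U\cap\Omega(f)=\emptyset$, so that $f^{k}(U)\cap U=\emptyset$ for all $k\ge1$. Then every $y\in U$ is wandering and non-periodic, $f^{k}(y)\notin U$ for $k\ge1$, and by Theorem \ref{PAMM1} $\omega_f(y)$ is a minimal set contained in $\mathrm R(f)=\Omega(f)\subset X\setminus\overline U$, while (if $y\in X_{\infty}$) $\alpha_f(y)$ is a minimal set contained in $\Omega(f)$; moreover $y\notin\omega_f(y)$ and $y\notin\alpha_f(y)$ (otherwise $y$ would be recurrent, as in the proof of Theorem \ref{RR}). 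Since $x\notin\mathrm P(f)$, the fibres $f^{-n}(x)$ are pairwise disjoint subcontinua of $X$ (monotonicity of $f^{n}$ makes each $f^{-n}(x)$ connected, and $f^{-n}(x)\cap f^{-m}(x)\ne\emptyset$ with $n<m$ would force $x$ periodic); hence $(f^{-n}(x))_n$ is a null family (Proposition \ref{Fs}), $\mathrm{diam}\,f^{-n}(x)\to0$, and $\alpha_f(x)$ is strongly invariant by Proposition \ref{p43}.

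\emph{Continuity of $\omega_f$ at $x$.} Let $x_n\to x$; we may assume $x_n\in U$. The sequence $(\omega_f(x_n))_n$ lies in the space of minimal sets, so by compactness of $(2^{X},d_H)$ and Theorem \ref{LM} every subsequential limit $L$ is minimal; it suffices to show $L=\omega_f(x)$, and since $\omega_f(x)$ is a nonempty closed $f$-invariant subset of the minimal set $L$, it is enough that $\omega_f(x)$ and $L$ are not disjoint. Assume $\omega_f(x_{n_k})\to L$ with $L\cap\omega_f(x)=\emptyset$. Then $\overline{\mathrm{Orb}_f(x)}=\mathrm{Orb}_f(x)\cup\omega_f(x)$ is disjoint from $L$ (if $f^{j}(x)\in L$ then $\omega_f(x)=\omega_f(f^{j}(x))=L$) and, when $L$ is infinite, also from the closed set $\mathcal B(L)$ (Theorem \ref{PAMM1}(3)). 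Separate these disjoint compacta by an open set $V$ with finite boundary, $\overline{\mathrm{Orb}_f(x)}\subset V$ and $\overline V\cap L=\emptyset$ (and $\overline V\cap\mathcal B(L)=\emptyset$ if $L$ is infinite). Then $\mathrm{Orb}_f(x)\subset V$, while for $k$ large the orbit of $x_{n_k}$ eventually lies in $X\setminus\overline V$ (it accumulates on $\omega_f(x_{n_k})$, which is $d_H$-close to $L\subset X\setminus\overline V$) although $x_{n_k}\in V$. Joining $x$ to $x_{n_k}$ by arcs $I_{n_k}\to\{x\}$ (so $I_{n_k}\subset V$ eventually) and choosing $t_k\to+\infty$ with $f^{t_k}(x_{n_k})\notin V$, the connected set $f^{t_k}(I_{n_k})$ meets $V$ at $f^{t_k}(x)$ and $X\setminus V$ at $f^{t_k}(x_{n_k})$, hence meets the finite set $\partial V$; passing to a subsequence, a fixed $b\in\partial V$ equals $f^{t_k}(c_k)$ for some $c_k\in I_{n_k}\to x$, so $x\in\alpha_f(b)$ and therefore $\omega_f(x)\subset\overline{\mathrm{Orb}_f(x)}\subset\alpha_f(b)$; if $b\notin\mathrm P(f)$, then $\alpha_f(b)$ is minimal (Theorem \ref{PAMM1}), whence $\alpha_f(b)=\omega_f(x)$, and one derives a contradiction by confronting the location $b\in\partial V$ — which by construction lies outside $\overline{\mathrm{Orb}_f(x)}$ and, in the infinite case, outside $\mathcal B(L)$ — with the dynamics just found. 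The remaining cases ($b\in\mathrm P(f)$, and $L$ a periodic orbit, where $\mathcal B(L)$ need not be closed) should be reduced, after replacing $f$ by a suitable power, to the case $L=\{p\}$ a fixed point and settled by a direct estimate.

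\emph{Continuity of $\alpha_f$ at $x$.} The argument is dual, now using that $f$ (hence every $f^{m}$) is monotone, so preimages of connected sets are connected, together with the fact that $(f^{-n}(x))_n$ is a null family. For $x_n\to x$ in $X_{\infty}$ with $x_n\in U$, the sets $\alpha_f(x_n)$ are minimal, so by Theorem \ref{LM} it again suffices to show that every subsequential Hausdorff limit $L$ of $(\alpha_f(x_n))_n$ is not disjoint from $\alpha_f(x)$. Choosing for each $x_n$ a negative orbit realizing $\alpha_f(x_n)$ (Theorem \ref{PAMM1}(2)), joining $x_n$ to $x$ by small arcs and pulling these arcs back along the orbits, the finiteness of the boundary of a separating open set $V$ produces, exactly as above, a boundary point whose limit set is forced to coincide with a minimal set it should avoid — a contradiction.

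\emph{Main obstacle.} Everything funnels into the boundary-crossing step, and the genuine difficulty is to turn it into an actual contradiction: one must ensure the escaping orbits can be pinned to a single point $b$ of the finite set $\partial V$ and then correctly identify the dynamics of $b$ (via minimality of $\alpha_f(b)$ when $b\notin\mathrm P(f)$, the closedness of $\mathcal B(L)$ from Theorem \ref{PAMM1}(3), and a separate treatment of periodic orbits obtained by passing to a power of $f$). This pinning-and-identification is the delicate point; once it is in place, the reduction through Theorem \ref{LM} and Theorem \ref{PAMM1} makes the rest of both arguments routine.
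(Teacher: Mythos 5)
Your overall mechanism (join $x$ to $x_n$ by small arcs, push forward or pull back, force a crossing of a finite boundary, pin infinitely many crossings to a single boundary point $b$, then invoke minimality of limit sets) is the same as the paper's, and your $\alpha_f$ half is essentially the paper's argument: there the pinned point satisfies $x\in\omega_f(b)$, and since $\omega_f(b)\subset \mathrm{R}(f)=\Omega(f)$ (Theorem \ref{t41}) one gets an immediate contradiction with $x\notin\Omega(f)$, with no case analysis at all --- a point worth stating explicitly, since you describe it as ``exactly as above'' while ``above'' contains an unresolved case. In the $\omega_f$ half, the branch $b\notin \mathrm{P}(f)$ is also simpler than you make it: $\alpha_f(b)$ is then a minimal set containing $x$, so $x\in\mathrm{R}(f)$, contradiction; no confrontation with $\mathcal{B}(L)$ or with the location of $b$ is needed (and you do not need $\mathcal{B}(L)$ anywhere, which also removes your worry about it failing to be closed for periodic $L$).

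The genuine gap is the branch $b\in \mathrm{P}(f)$, which is the heart of the lemma and which you defer to ``replacing $f$ by a suitable power'' and ``a direct estimate''; no such reduction is carried out, and it is not clear it can be, because with one fixed separating neighbourhood $V$ your construction yields only finitely many candidate boundary points, and a single periodic $b$ with $x\in\alpha_f(b)$ produces no contradiction (such configurations really occur, cf.\ Example \ref{e616}). The paper resolves this by working with a shrinking family of neighbourhoods $V_{A,m}\subset B(A,\frac1m)$ of the limit minimal set $A$, producing for every $m$ a periodic boundary point $b_m$ with $x\in\alpha_f(b_m)$, showing via Lemma \ref{COMEGA} that the orbits $O_f(b_m)$ converge to $A$ and extracting a subsequence of pairwise disjoint orbits; then, fixing a neighbourhood $V$ of $A$ with $\mathrm{card}(\partial V)=k$ and $x\notin\overline V$, it uses monotonicity (each $f^{-s}(b_m)$ is a continuum) together with Lemma \ref{Orbit periodique d} (disjoint orbits have disjoint preimage trees) to obtain $k+1$ pairwise disjoint continua $f^{-s_{m_i}}(b_{m_i})$, each meeting $V$ at a point of the periodic orbit $O_f(b_{m_i})\subset V$ and leaving $V$ because $x\in\alpha_f(b_{m_i})$; each must therefore cross the $k$-point set $\partial V$, which is impossible. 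This production of infinitely many periodic boundary points with disjoint orbits and the disjoint-preimage counting argument are the missing ideas in your proposal, so as written it does not prove the $\omega_f$ statement.
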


\begin{proof}
First note that $X \backslash \textrm{R}(f) = X \backslash \Omega(f)$ is a open set disjoint from $\overline{P(f)}$. Thus for any point $a\in X \backslash \Omega(f)$, $\alpha_{f}(a)$ and $\omega_{f}(a)$ are minimal sets. Let $x\in X \backslash \textrm{R}(f)$:\\
$\bullet$ Continuity of $\omega_{f}$:
 Assume that $\omega_{f}$ is not continuous at $x$ and set $M=\omega_{f}(x)$, then we can find a sequence $(x_{n})_{n\geq 0}$ that converges to $x$ such that $\omega_{f}(x_{n})$ converges to $A\neq M$. By Theorem \ref{LM}, $A$ is a minimal set, thus $A\cap M=\emptyset$. Let $m\geq 0$ and $V_{A,m}\subset B(A,\frac{1}{m})$ be an open set with finite boundary such that $M\cap \overline{V_{A,m}}=\emptyset$. Let $(I_{n})_{n\geq 0}$ be a sequence of arcs joining $x$ and $x_{n}$, where $(I_{n})_{n\geq 0}$ converge to $\{x\}$ with respect to the Hausdorff metric. Recall that $A = \displaystyle \lim_{n\to +\infty}\omega_{f}(x_{n})$, so for $n$ large enough we can find some $p_{n}$ such that $f^{p_{n}}(x_{n})\in V_{A,m}$ and $f^{p_{n}}(x)\notin V_{A,m}$. Then $f^{p_{n}}(I_{n})$ will meets $\partial V_{A,m}$ and so $x\in \alpha_{f}(b_{m})$ for some $b_{m}\in \partial V_{A,m}$. Since $x\notin \Omega(f)$, we conclude that $\alpha_{f}(b_{m})$ is not a minimal set, thus By Theorem \ref{PAMM1}, $b_{m}\in \textrm{P}(f)$.\\

 \textit{Claim}: The sequence $(b_{m})_{m\geq 0}$ satisfies the follow properties:\\
 (i) For any $m\geq 0, \;b_{m}\in \textrm{P}(f)$ and $x\in \alpha_{f}(b_{m})$.\\
 (ii) The sequence $(O_{f}(b_{m}))_{m\geq 0}$ converges to $A$.\\
 (iii) By passing to a subsequence if needed, the family $(O_{f}(b_{m}))_{m\geq 0}$ is pairwise disjoint.\\

\textit{Proof of the claim}:
 (i) is already proven above, for the prove of (ii) observe that any limit point of $(b_{m})_{m\geq 0}$ is a point of $A$ and since $\{b_{m}:\; m\geq 0\}\cup A\subset \Omega(f)$, so by Lemma \ref{COMEGA} ends the proof of (ii).\\
 (iii) Clearly for any $m\geq 0$, $O_{f}(b_{m})\cap A=\emptyset$. Let $b_{m_{0}}=b_{0}$ and $O_{1}$ be an open neighborhood of $A$ such that $O_{f}(b_{m_{0}})\cap O_{1}=\emptyset$. As $(b_{m})_{m\geq 0}$ converges to $A$ we can find $m_{1}>0$ such that $b_{m_{1}}\in O_{1}$. Thus $O_{f}(b_{0})$ and $O_{f}(b_{1})$ are disjoint.\\
 For $N\geq 0,\; (O_{f}(b_{m_{i}}))_{0\leq i\leq N}$ is defined and pairwise disjoint. Let $O_{N+1}$ an open neighborhood of $A$ such that\\ $\displaystyle\bigcup_{0\leq i\leq N}O_{f}(b_{m_{i}})\cap O_{N+1}=\emptyset $. We can find $m_{N+1}>m_{N}$ such that $b_{m_{N+1}} \in O_{N+1}$ and then $(O_{f}(b_{n_{i}}))_{0\leq i\leq N+1}$ is pairwise disjoint. We have defined by indication the subsequence $(b_{m_{i}})_{i\geq 0}$ so that $(O_{f}(b_{m_{i}}))_{i\geq 0}$ are pairwise disjoint. This ends the proof of the claim.
\smallskip

Now $x\notin \textrm{R}(f)$, so $x\notin A$. Let $V$ be an open neighborhood of $A$ such that $x\notin \overline{V}$ and set $k=\textrm{card}(\partial V)$. By (iii) there exist $N\geq 0$ such that for any $m\geq N,\; \omega_{f}(b_{m})=O_{f}(b_{m})\subset V$. By (i), we may find for each $m\geq 0$ some $s_{m}\geq 0$ such that $f^{-s_{m}}(b_{m}) \nsubseteq V$. Fix $N\leq m_{0}<m_{2}<\dots m_{k}$. By Lemma \ref{Orbit periodique d} and monotonicity of $f$, the family $(f^{-s_{m_{i}}}(b_{m_{i}}))_{0\leq i\leq k}$ is a pairwise disjoint family of connected subset of $X$. Observe that for each $0\leq i\leq N,\; f^{-s_{m_{i}}}(b_{m_{i}})\nsubseteq V$ and $f^{-s_{m_{i}}}(b_{m_{i}})$ meets $V$ at some point of $O_{f}(b_{m_{i}})$. Therefore for each $0\leq i\leq N,\; f^{-s_{m_{i}}}(b_{m_{i}})$ meets $\partial V$. This leads to a contradiction.\\

$\bullet$ Continuity of $\alpha_{f}$.

Set $M=\alpha_{f}(x)$. By Theorem \ref{PAMM1}, $M$ is a minimal set, assume that $(\alpha_{f}(x_{n}))_{n\geq 0}$ converges to $L\neq M$, then $L \nsubseteq M$, so let $t\in L\setminus M$.\\
Let $V_{t}$ be an open neighborhood of $t$ with finite boundary such that $M\cap \overline{V_{t}} = \emptyset$ and $(I_{n})_{n\geq 0}$ a sequence of arcs joining $x$ and $x_{n}$ converging to $\{x\}$ with respect to the Hausdorff metric. Since $t\in L = \displaystyle \lim_{n\to +\infty}\alpha_{f}(x_{n})$, for $n$ large enough, we can find some $p_{n}\geq 0$ such that $f^{-p_{n}}(x_{n})\subset V_{t}$ and $f^{-p_{n}}(x)\nsubseteq V_{t}$. Then by monotonicity of $f$ $f^{-p_{n}}(I_{n})$ will meets $\partial V_{t}$ and thus $x\in \omega_{f}(b)$ for some $b\in \partial V_{t}$. By Theorem \ref{t41}, $\omega_{f}(b)\subset \textrm{R}(f)$, hence $x\in \textrm{R}(f)$. A contradiction.
\end{proof}

\textit{Proof of Theorem \ref{WACC}.} Let $x\in X\setminus \textrm{P}(f)$. If $x\notin \Omega(f)$, then by Lemma \ref{WAC}, $\omega_{f}$ is continuous at $x$. So assume that $x\in \Omega(f)\backslash \textrm{P}(f)$ and set $M = \omega_{f}(x)$. Then $x\in M$ and $M$ is an infinite minimal set.\\

$\bullet$  \textit{Continuity of $\alpha_{f}$ on $X\setminus \textrm{P}(f)$}:
By Theorem \ref{PAMM1}, $\alpha_{f}(x)=M$. Assume that there is some sequence $(x_{n})_{n\geq 0}$ of $X$ converging to $x$ such that $\alpha_{f}(x_{n})$ converges to $L$ and $L\neq M$. By minimality of $M$, $L\nsubseteq M$. Assume that there exists $y\in L\setminus M$. We distinguish two cases:\\

\textit{Case 1}: \textit{For infinitely many $n\geq 0$, $x_{n}\in \textrm{P}(f)$.} Since $x$ in non periodic point, so per($(x_{n}))_n$ is unbounded. By choosing $(x_{n}))_{n\geq 0}$ with pairwise distinct periods  per($(x_{n}))_n$, one can assume that $(O_{f}(x_{n}))_{n\geq 0}$ is pairwise disjoint. Therefore by Lemma \ref{Orbit periodique d}, the family $(S_{x_n})_{n\geq 0}$ is pairwise disjoint. Let now $V_{y}, V_{M}$ be two disjoint open neighborhoods of $y, M$ with finite boundary. By Lemma \ref{COMEGA}, the restriction map $(\omega_{f})_{| \Omega(f)}$ is continuous. Therefore as $x\in \textrm{R}(f)\backslash \textrm{P}(f)$, we conclude that the sequence $O_{f}(x_{n}))_{n\geq 0}$ converges to $\omega_{f}(x)=M$. So one can assume that for any $n\geq 0$, we have $O_{f}(x_{n})\subset V_{M}$. 
As $y\in \displaystyle\lim_{n\to +\infty}\alpha_{f}(x_{n})$, then for each $n\geq 0$, one can find $m_{n}>0$ such that $f^{-m_{n}}(x_{n})\cap V_{y}\neq \emptyset$. So let $S_{m_{n},n}$ be the connected component of $S_{n}$ containing $f^{-m_{n}}(x_{n})$. Hence $(S_{m_{n},n})_{n\geq 0}$ is a family of pairwise disjoint connected sets each of which meets $V_{M}$ and $V_{y}$ (since $x_{n}$ is a periodic point such that $O_{f}(x_{n})\subset V_{M}$). This leads to a contradiction since $\partial(V_{M})$ is finite.\\

\textit{Case 2:} \textit{For $n$ large enough, $x_{n}\notin \textrm{P}(f)$}. In this case, by Theorem \ref{PAMM1}, $\alpha_{f}(x_{n})$ is a minimal set. Thus by Theorem \ref{LM}, $L$ is also a minimal set and then $M\cap L=\emptyset$. So let $V_{M}$ and $V_{L}$ be two disjoint neighborhoods of $M, L$ with finite boundary and set $k= \textrm{card}(\partial(V_{M}))$. As $(x_{n})_{n\geq 0}$ converges to $x$, then we can find $k+1$ pairwise disjoint arcs $I_{0},I_{1},\dots, I_{k}$, each $I_{j}$ joins $f^{j}(x)$ and $f^{j}(x_{N})$, for some $N$ large enough and satisfying $\alpha_{f}(x_{N}) \subset V_{L}$. Then we can find $s\geq 0$ such that for any $i\geq s$, $f^{-i}(x_{N})\subset V_{L}$. 
Consider the family $(f^{-(s+k)}(I_{j}))_{0\leq j\leq k}$, which is a family of pairwise disjoint connected sets, each of which meets $V_{L}$ at $f^{-(s+k)}(x_{N})$ and meets $V_{M}$ at some point of $f^{-(s+k)}(f^{j}(x))$, this will lead to a nonempty intersection with $\partial(V_{M})$ for each $0\leq j\leq k$. A contradiction. \\
This ends the proof of the continuity of $\alpha_{f}$.
\medskip

$ \bullet$  \textit{Continuity of $\omega_{f}$}. Assume that there is some sequence $(x_{n})_{n\geq 0}$ of $X$ converging to $x$ such that $\omega_{f}(x_{n})$ converges to $L$ and $L \neq M$. As $\omega_{f}(x_{n})$ is a minimal set, for any $n\geq 0$, thus by Theorem \ref{LM}, $L$ is also a minimal set and then $M\cap L=\emptyset$.\\

\textit{Claim 1.} For any neighborhood $V_{L}$ of $L$, there exists $z\in V_{L}\cap \textrm{P}(f)$ such that $M\subset \alpha_{f}(z)$.\\

\textit{Proof}. Fix $V_{L}$ some neighborhood of $L$. Let $U_{M}$ and $U_{L}$ be two disjoint neighborhoods of $M, L$ with finite boundary such that  $U_{L}\subset V_{L}$. Set $k=\textrm{card} (\partial(U_{L}))$. As $(x_{n})_{n\geq 0}$ converges to $x$ and $O_{f}(x)$ is infinite, then for each $0\leq j\leq k$, we can find a sequence of arcs $(I_{n,j})_{n\geq 0}$ joining $f^{j}(x_{n})$ and $f^{j}(x)$ converging to $\{f^{j}(x)\}$ such that for any $n,m\geq 0$ and for any $0\leq i< j\leq k$, $I_{n,i}\cap I_{m,j}=\emptyset$. Let $\eta=\min\{d(f^{p}(x),f^{q}(x)),\; 0\leq p<q\leq k\}$. Clearly $\eta>0$ and we may assume that $\varepsilon=\inf_{n\geq 0}\{d(I_{n,p},I_{n,q}): 0\leq p<q\leq k\}>0$.\\
We can assume that for any $n\geq 0,\; \omega_{f}(x_{n})\subset U_{L}$. Thus for each $n\geq 0$, we can find $m_{n}>0$ such that $\{f^{m_{n}}(f^{j}(x_{n})): 0\leq j\leq k\}\subset U_{L}$. Therefore for each $n\geq 0,\; 0\leq j\leq k,\; f^{m_{n}}(I_{n,j})\cap \partial(U_{L}) \neq \emptyset$. Then for each $n\geq 0$, there exists $z_{n}\in \partial (U_{L})$ and $0\leq j_{1,n}<j_{2,n}\leq k$ such that $z_{n}\in f^{m_{n}}(I_{n,j_{1,n}}) \cap f^{m_{n}}(I_{n,j_{2,n}})$. Hence $f^{-m_{n}}(z_{n})\cap I_{n,j_{p,n}}\neq \emptyset$, for $p\in \{0,1\}$. Recall that $z_{n}\in \partial(O_{L})$ which is finite and $0\leq j_{n,1}<j_{n,2}\leq k$. Hence there exists $z\in \partial(U_{L})$ and $0\leq j_{1}<j_{2}\leq k$ such that $f^{-m_{n}}(z)\cap I_{n,j_{p}}\neq \emptyset$, for infinitely many $n$, $p\in \{0,1\}$. Therefore for infinitely many $n$, we have that $\textrm{diam}(f^{-m_{n}}(z))\geq \varepsilon $. Hence $z\in \textrm{P}(f)$. Moreover as $f^{-m_{n}}(z)\cap I_{n,j_{p}}\neq \emptyset$, then $O_{f}(x)\cap \alpha_{f}(z)\neq \emptyset$ and hence $M\subset \alpha_{f}(z)$. This ends the proof of Claim 1.
\medskip

Now by Claim 1, we may find a sequence $(z_{n})_{n\geq 0}$ of periodic points with disjoint orbits converging to some point $l\in L$ such that $M\subset \alpha_{f}(z_{n})$. By Lemma \ref{COMEGA}, $(O_{f}(z_{n}))_{n\geq 1}$ converges to $L$. Moreover for each $n\geq 1$, we can find $s_{n}>0 $ such that $d(f^{-s_{n}}(z_{n}),M)\leq \frac{1}{n}$. As $O_{f}(z_{n})\subset \textrm{P}(f)$, then $O_{f}(z_{n}) \cap f^{-s_{n}}(z_{n}) \neq \emptyset $. Therefore we can find $N\geq 0$ such that for any $n\geq N$, we have $\textrm{diam}(f^{-s_{n}}(z_{n}))\geq \frac{d(M,L)}{2}$.  Recall that $(z_{n})_{n\geq 0}$ is a sequence of periodic points with disjoint orbits, hence $(f^{-s_{n}}(z_{n}))_{n\geq N}$ is a non null family of pairwise disjoint subcontinua. A contradiction with the fact that $X$ is a regular curve and hence finitely Suslinean. This ends the proof of the continuity of $\omega_{f}$. \qed

\section {\bf On special $\alpha$-limit sets}
In \cite{sep alpha1}, Hero introduced another kind of limit sets, called the\textit{ special} $\alpha$-limit sets. He considered the union of the $\alpha$-limit sets over all backward orbits of $x$.

\begin{defn} Let $X$ be a metric compact space, $f: X\to X$ a continuous map and $x\in X$. The special $\alpha$-limit set of $x$, denoted $s\alpha_{f}(x)$, is the union $s\alpha_{f}(x) = \displaystyle\bigcup \alpha_f((x_n)_{n\geq 0})$ taken over all negative orbits $(x_n)_{n\geq 0}$ of $x$.
\end{defn}

Notice that we have the following equivalence:

(i) $\alpha_{f}(x)\neq \emptyset$, (ii) $s\alpha_{f}(x)\neq \emptyset$, (iii) $x\in X_{\infty}$.

In particular, if $f$ is onto, then $s\alpha_{f}(x)\neq \emptyset$ for every $x\in X$.
\medskip

It is clear that $s\alpha_{f}(x)\subset \alpha_{f}(x)$. The inclusion can be strict. Hero \cite{sep alpha1} provided an example of a continuous map on the interval for which the inclusion $s\alpha_{f}(x) \subset \alpha_{f}(x)$ is strict. Even, one can provide an onto monotone interval map. Indeed, consider the monotone map $g: [0,1]\longrightarrow [0,1]:  x\longmapsto \max\{0,2x-1\}$. We see that
$s\alpha_{g}(0) = \{0, 1\} \subsetneq \alpha_{g}(0) = [0,1]$.

In (\cite[Theorem 3.3 and Corollary 3.11]{kms}), Kolyada et al. provided an example of a map on a subset of $\mathbb{R}^{2}$ where a special $\alpha$-limit set is not closed. Recently, Hant\'{a}kov\'{a} and Roth proved in (\cite{hr}, Theorem 37) that a special $\alpha$-limit set for interval map is always Borel, and in fact both $F_{\sigma}$ and $G_{\delta}$. Furthermore, they provided a counterexample of an interval map with a special $\alpha$-limit set which is not closed, this disproves the conjecture 1 in \cite{kms}. However they showed that a special $\alpha$-limit set is closed for a piecewise monotone interval map. Jackson et al. (\cite{jmr}) proved that a special $\alpha$-limit set is always analytic (i.e. a continuous image of a Polish space) and provide an example of a map of the unit square with special $\alpha$-limit set not a Borel set. Here we show that, for a monotone map on a regular curve, the special $\alpha$-limit set is always closed.
   %
\\
	
\subsection{\bf Relation between Nonwandering sets, $\alpha$-limit and Special $\alpha$-limit sets} The aim of this paragraph is to prove the following theorem.

\begin{thm}\label{S=OSS}
	Let $X$ be a regular curve and $f$ a monotone self mapping of $X$. Then for every $x\in X_{\infty}$, we have that \ $s\alpha_{f}(x) = \alpha_{f}(x)\cap \Omega(f)$.
\end{thm}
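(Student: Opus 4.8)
The plan is to prove the two inclusions $s\alpha_f(x)\subset \alpha_f(x)\cap\Omega(f)$ and $\alpha_f(x)\cap\Omega(f)\subset s\alpha_f(x)$ separately, and to reduce everything to the already established structural facts about minimal sets (Theorem \ref{PAMM1}) and the coincidence $\Omega(f)=\mathrm{R}(f)=\mathrm{AP}(f)$ (Theorem \ref{t41}). For the first inclusion, recall that $s\alpha_f(x)=\bigcup\alpha_f((x_n)_{n\ge 0})$ over negative orbits of $x$, and $s\alpha_f(x)\subset\alpha_f(x)$ always holds; so it remains to see that each $\alpha_f((x_n)_{n\ge 0})\subset\Omega(f)$. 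By Proposition \ref{p43}(ii), $\alpha_f((x_n)_{n\ge 0})$ is nonempty, closed and strongly $f$-invariant; by Theorem \ref{PAMM1}(2) it is in fact a minimal set, hence consists of almost periodic points, so $\alpha_f((x_n)_{n\ge 0})\subset\mathrm{AP}(f)=\Omega(f)$ by Theorem \ref{t41}. This gives $s\alpha_f(x)\subset\alpha_f(x)\cap\Omega(f)$ with essentially no work.

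The substantive direction is $\alpha_f(x)\cap\Omega(f)\subset s\alpha_f(x)$. Fix $y\in\alpha_f(x)\cap\Omega(f)$. By Theorem \ref{t41}, $y\in\mathrm{R}(f)$, so $y\in\omega_f(y)$, and by Theorem \ref{PAMM1}(1) the set $M:=\omega_f(y)$ is a minimal set containing $y$. The goal is to produce a single negative orbit $(x_n)_{n\ge 0}$ of $x$ whose $\alpha$-limit set is exactly $M$ (or at least contains $y$). The idea is: since $y\in\alpha_f(x)$, there are points $z_k\to y$ and integers $m_k\to\infty$ with $f^{m_k}(z_k)=x$; one wants to "straighten" these finite backward strings into one infinite backward orbit landing in $M$. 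Because $y\in M$ and $M$ is minimal, there is a full negative orbit $(y_n)_{n\ge 0}$ of $y$ lying inside $M$ with $\alpha_f((y_n)_n)=M$ (minimality forces the backward orbit to be dense in $M$; use strong invariance of $M$ under $f$ from Theorem \ref{PAMM1}). Now I would use monotonicity of $f$ together with the regular-curve/null-family machinery (Proposition \ref{Fs}) to glue: pick arcs $L_k$ joining $z_k$ to $y$ with $L_k\to\{y\}$; then $f^{-m_k}$ of a suitable arc from $x$ meets a small neighborhood of $y_{m_k}$, and a diagonal/inductive selection over $k$ yields a negative orbit of $x$ that shadows $(y_n)_n$ more and more closely, hence accumulates on all of $M\ni y$.

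The main obstacle is exactly this gluing step: constructing one honest negative orbit $(x_n)_n$ of $x$ — i.e. a sequence with $x_0=x$ and $f(x_{n+1})=x_n$ for all $n$ — rather than merely a family of finite preimage chains of growing length. The natural device is a compactness/König's-lemma argument on the tree of finite backward orbits of $x$ whose terminal points approach $M$: one shows this tree is infinite (because $y\in\alpha_f(x)$ produces arbitrarily long such chains ending near $y\in M$), each vertex has finitely many "relevant" children up to the null-family estimate of Proposition \ref{Fs}, and hence it has an infinite branch; that branch is the desired negative orbit, and its $\alpha$-limit set is a nonempty closed $f$-invariant — by Theorem \ref{PAMM1}(2), minimal — subset of $M$, so it equals $M$ and in particular contains $y$. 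Care is needed to ensure the branch actually accumulates on $M$ and not on some other minimal set; this is where one must feed in, at each level, that the approximating chains come from $f^{-m_k}(x)$ passing near the prescribed points $y_{m_k}\in M$, using monotonicity to control that the connected preimage sets cannot escape a chosen neighborhood of $M$ without crossing a finite boundary. Once $y\in s\alpha_f(x)$ for every such $y$, combined with the first inclusion we get the claimed equality $s\alpha_f(x)=\alpha_f(x)\cap\Omega(f)$.
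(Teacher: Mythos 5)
Your first inclusion is fine and is exactly the paper's Corollary \ref{c62}. The gap is in the reverse inclusion, and it is precisely at the step you yourself flag as the ``main obstacle'': the construction of a negative orbit of $x$ accumulating at $y$ is only asserted, and the devices you name do not deliver it. K\"onig's lemma is inapplicable because the backward tree of $x$ under a monotone map is not finitely branching: each $f^{-1}(z)$ is a (possibly nondegenerate) subcontinuum, so vertices have uncountably many children, and the appeal to Proposition \ref{Fs} to cut this down to ``finitely many relevant children'' is not an argument. One can instead get an infinite branch by a diagonal compactness argument on the finite chains $x, f^{m_k-1}(z_k),\dots,z_k$ with $z_k\to y$, but then nothing controls where that branch accumulates: at backward depth $n$ the chain sits at $f^{m_k-n}(z_k)$, which is unconstrained as $m_k\to\infty$, so the claim that the branch's $\alpha$-limit set is a minimal subset of $M=\omega_f(y)$ (hence equals $M$) is unjustified. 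The shadowing of a prescribed negative orbit $(y_n)\subset M$ is exactly what has to be proved, and the hypothesis $y\in\Omega(f)$ enters nowhere in your construction in a way that forces it; note that long backward chains ending near a point are also available for points \emph{not} in $s\alpha_f(x)$ (e.g.\ $g(t)=\max\{0,2t-1\}$ on $[0,1]$, $x=0$, points near $1/2$), so some additional mechanism is indispensable.

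The paper's proof runs along a genuinely different line, which you may want to compare. For $x\in X_{\infty}\setminus\mathrm{P}(f)$ the statement is immediate from Theorem \ref{PAMM1}(2), so the substantive case is $x\in\mathrm{P}(f)$. There, Lemma \ref{TMSF} (proved via continuity of $\omega_f$ at non-periodic points) shows every $y\in\alpha_f(x)\cap\Omega(f)$ is itself periodic; if $O_f(y)\cap O_f(x)=\emptyset$ one gets $\Omega(f)\subsetneq X$ (Lemma \ref{l66}), chooses via Lemma \ref{az} an open $U\supset\Omega(f)$ with small $\mathrm{Mesh}(U\setminus\Omega(f))$ and finite boundary, and shows the invariant set $A=\bigcup_{n\geq 0}f^{-n}(x)\cup O_f(x)\cup O_f(y)$ has finitely many arc components (monotonicity plus periodicity of $x$). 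Arcs $I_n$ inside one component joining $y$ to points of $f^{-m_n}(x)$ are pushed forward by $f^{m_n}$; since $f^{m_n}(I_n)\subset A$ meets $\Omega(f)$ only in $O_f(x)\cup O_f(y)$, it must cross $\partial U$, and a pigeonhole on the finite boundary produces a non-periodic point $z$ with $f^{p}(z)=x$ and $\alpha_f(z)=O_f(y)$; then $s\alpha_f(z)=\alpha_f(z)$ by Theorem \ref{PAMM1}(2), and prepending the finite chain from $x$ to $z$ gives $y\in s\alpha_f(x)$. In other words, the paper does not try to shadow inside $M$ at all: it manufactures an intermediate non-periodic preimage $z$ of $x$ to which the minimality machinery applies, and this is the idea missing from your proposal.
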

\medskip

First, we derive from Theorem \ref{PAMM1} the following corollary.

\begin{cor}\label{c62} Let $X$ be a regular curve and $f$ a monotone self mapping of $X$. Then or any $x\in X_{\infty}$: \\
	(i) $ s\alpha_{f}(x) \subset \Omega(f)$, \\
	(ii) $s\alpha_{f}(x)$ is a union of minimal sets.  \\
	(iii) if $x\in X_{\infty}\setminus \mathrm{P}(f)$, then $s\alpha_{f}(x) = \alpha_{f}(x)$ is a minimal set.
\end{cor}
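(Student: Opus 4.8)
The plan is to deduce all three items directly from the structural results already in hand, chiefly Theorem \ref{PAMM1} and Theorem \ref{t41}. Recall that for $x\in X_\infty$ a negative orbit $(x_n)_{n\ge 0}$ exists, and $s\alpha_f(x)=\bigcup \alpha_f((x_n)_{n\ge 0})$ over all such negative orbits. First I would establish (i) and (ii) together. Fix a negative orbit $(x_n)_{n\ge 0}$. By Theorem \ref{PAMM1}(2), $\alpha_f((x_n)_{n\ge 0})$ is a minimal set of $f$; in particular it is contained in $\mathrm{AP}(f)$, hence in $\mathrm{R}(f)$, and by Theorem \ref{t41} this equals $\Omega(f)$. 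Taking the union over all negative orbits of $x$ gives $s\alpha_f(x)\subset \Omega(f)$, which is (i), and exhibits $s\alpha_f(x)$ as a union of minimal sets, which is (ii).

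For (iii), suppose now $x\in X_\infty\setminus \mathrm{P}(f)$. By Theorem \ref{PAMM1}(2), since $x$ is non-periodic, $\alpha_f(x)$ is itself a minimal set and moreover $\alpha_f(x)=\alpha_f((x_n)_{n\ge 0})$ for \emph{every} negative orbit $(x_n)_{n\ge 0}$ of $x$. Consequently every set in the union defining $s\alpha_f(x)$ equals $\alpha_f(x)$, so $s\alpha_f(x)=\alpha_f(x)$, and this common set is a minimal set. (One should check $x\in X_\infty$ indeed admits a negative orbit so the union is nonempty; this is the equivalence recorded just after the definition of $s\alpha_f$.)

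I do not anticipate a genuine obstacle here: the corollary is essentially a bookkeeping consequence of Theorem \ref{PAMM1} once one unwinds the definition of $s\alpha_f(x)$ as a union over negative orbits. The only point requiring a modicum of care is that the equality $\alpha_f(x)=\alpha_f((x_n)_{n\ge 0})$ in Theorem \ref{PAMM1}(2) is asserted precisely for non-periodic $x\in X_\infty$, so part (iii) cannot be extended to periodic points — and indeed the introduction promises a counterexample on an infinite star showing that $s\alpha_f(x)$ need not even be minimal for a periodic $x$. Thus the hypothesis $x\notin \mathrm{P}(f)$ in (iii) is genuinely needed, and I would flag that the proof of (i)–(ii) uses only $x\in X_\infty$ whereas (iii) uses the stronger hypothesis.
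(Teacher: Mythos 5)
Your proposal is correct and follows the paper's own argument: both items (i)–(ii) come from Theorem \ref{PAMM1}(2) giving that each $\alpha_f((x_n)_{n\geq 0})$ is a minimal set (hence contained in $\mathrm{AP}(f)\subset\Omega(f)$), and (iii) comes from the same theorem's statement that for non-periodic $x\in X_\infty$ all these sets coincide with $\alpha_f(x)$, which is minimal. The only cosmetic difference is that you route the inclusion through Theorem \ref{t41}, whereas the standing inclusion $\mathrm{AP}(f)\subseteq\Omega(f)$ already suffices; this does not change the argument.
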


\begin{proof} Let $x\in X_{\infty}$. By Theorem \ref{PAMM1}, (2), $\alpha_{f}((x_{n})_{n\geq 0})$ is a minimal set for every negative orbit $(x_{n})_{n\geq 0}$ of $x$, thus (i) and (ii) follow. Assume now that $x\in X_{\infty}\setminus \textrm{P}(f)$, again by Theorem \ref{PAMM1}, (2), we have $\alpha_{f}((x_{n})_{n\geq 0})=\alpha_{f}(x)$ is a minimal set for any negative orbit $(x_{n})_{n\geq 0}$ of $x$, thus (iii) follows.
\end{proof}

The proof of Theorem \ref{S=OSS} needs the following lemma.

\begin{lem}\label{TMSF}
Let $X$ be a regular curve and $f$ a monotone self mapping of $X$. If $x\in \mathrm{P}(f)$, then every minimal set $M\subset \alpha_{f}(x)$ is a periodic orbit.
\end{lem}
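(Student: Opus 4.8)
The plan is to assume, for contradiction, that $M \subset \alpha_f(x)$ is an infinite minimal set (a minimal set contained in $\mathrm{P}(f)$ is automatically a periodic orbit, so the only thing to rule out is an infinite minimal set). Since $x$ is periodic, say of period $p$, let me write $y_j = f^j(x)$ for the points of $O_f(x)$, so that $f^p(y_j) = y_j$ for each $j$. The starting point is that $M \subset \alpha_f(x)$ means there is a point $w \in M$, an increasing sequence of times $(n_k)$, and points $w_k \to w$ with $f^{n_k}(w_k) = x$. The key structural fact I would exploit is that the sets $S_x = \bigcup_{i \geq 0} f^{-i}(x)$, or rather the connected components of the iterated preimages of the finite set $O_f(x)$, are connected (by monotonicity of $f$, hence of all $f^n$), and that $M$ being infinite forces these preimage-continua to be ``large'' infinitely often, contradicting the finitely Suslinean property of the regular curve.

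The core argument I envisage runs as follows. Since $M$ is an infinite minimal set disjoint from the finite periodic orbit $O_f(x)$ (they are disjoint: $M$ infinite, $O_f(x)$ finite, and a minimal set is either contained in $\mathrm{P}(f)$ or disjoint from every periodic orbit), choose disjoint open neighborhoods $U_M \supset M$ and $U \supset O_f(x)$, with $\partial U$ finite of cardinality $k$. Because $M \subset \alpha_f(x)$, fixing a point $w \in M$ we can find points $w_k \to w$ with $f^{n_k}(w_k) = x$; using local arcwise connectedness pick arcs $I_k \to \{w\}$ joining $w_k$ to $w$, so that for $k$ large $I_k \subset U_M$. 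Then $f^{n_k}(I_k)$ is a continuum containing $x \in O_f(x) \subset U$ and also containing $f^{n_k}(w_k')$ for nearby points, but more to the point $f^{-n_k}(x) \supset$ (the component of) something meeting $I_k \subset U_M$; by monotonicity the component $C_k$ of $f^{-n_k}(O_f(x))$ meeting $I_k$ is a continuum that meets $U_M$ and also meets $U$ (as it maps onto a point of $O_f(x)$), hence $C_k \cap \partial U \neq \emptyset$. Since $M$ is minimal and infinite, by taking $w$ to range over $M$ and using that the $f^{-n_k}(x)$-components landing near distinct points of $M$ can be chosen in pairwise disjoint preimage-trees (here I invoke Lemma \ref{Orbit periodique d} in the form that distinct components of $\bigcup_i f^{-i}(O_f(x))$ are disjoint, together with minimality to spread over $> k$ far-apart points of $M$), I get $k+1$ pairwise disjoint continua each meeting both $U_M$ and $\partial U$, hence each meeting the finite set $\partial U$ — forcing two of them to meet $\partial U$ in the same point, contradicting disjointness. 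This is essentially the same mechanism used in the proof of Lemma \ref{WAC} and Theorem \ref{LM}.

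The main obstacle, and the step needing the most care, is organizing the family of preimage-continua so that one genuinely obtains $k+1$ \emph{pairwise disjoint} ones meeting both $U_M$ and $\partial U$. The subtlety is that a priori the backward branches realizing different points of $M$ as $\alpha$-limit points could share initial segments (the preimages $f^{-n}(x)$ for a fixed $n$ need not be disjoint from each other). The way around this is to use that $x$ is periodic: then $f^{-mp}(x) \ni x$ for all $m$, and the relevant object is the nested family of components of $f^{-mp}(O_f(x))$; minimality of $M$ lets me find, for each target accuracy, a time $m$ for which some component reaches within $\varepsilon$ of a prescribed point of $M$, and an argument as in Claim 1 of Theorem \ref{WACC} (passing to a subsequence, using finiteness of $\partial U$) extracts the pairwise disjoint subfamily. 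Once the $k+1$ disjoint continua straddling $U_M$ and $U$ are in hand, the contradiction with $\mathrm{card}(\partial U) = k$, and hence with the regular-curve (finitely Suslinean) hypothesis, is immediate, so $M$ must be finite, i.e. a periodic orbit.
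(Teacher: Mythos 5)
There is a genuine gap at the heart of your argument: the extraction of $k+1$ \emph{pairwise disjoint} preimage continua each meeting $U_M$ and $\partial U$ cannot be carried out, because here all backward branches emanate from the single periodic orbit $O_f(x)$. If $p$ is the period of $x$ and $N$ is a multiple of $p$, then each continuum $f^{-N}\bigl(f^j(x)\bigr)$ contains the point $f^j(x)$ itself, and these sets are \emph{nested increasing} in $N$; at any fixed time there are at most $p$ pairwise disjoint preimage continua of $O_f(x)$, and as $N$ grows they only get larger, never disjoint from the earlier ones. So no matter how you spread over far-apart points of $M$, the continua you produce all pass through $O_f(x)$ and pairwise intersect, and the finiteness of $\partial U$ (whose cardinality $k$ is not controlled by the fixed number $p$) yields no contradiction. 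Your appeal to Lemma \ref{Orbit periodique d} is inapplicable, since that lemma requires two \emph{disjoint} orbits, and your analogy with Claim 1 of Theorem \ref{WACC} (or Lemma \ref{WAC}) breaks down precisely because there the disjointness of the preimage trees came from distinct periodic orbits $O_f(b_m)$, or from disjoint arcs attached to distinct points of an infinite orbit — neither is available when everything is pulled back from one finite orbit. You do flag the difficulty (``nested family of components of $f^{-mp}(O_f(x))$''), but then assert that a subsequence argument ``extracts the pairwise disjoint subfamily,'' which is exactly what nestedness forbids; this is the missing idea, not a routine detail.

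For comparison, the paper's proof avoids counting disjoint continua altogether: taking $x_n\in f^{-m_n}(x)$ with $x_n\to y\in M$, one has $\omega_f(x_n)=O_f(x)$ for every $n$, and since $y$ is non-periodic ($M$ being infinite minimal), the continuity of $\omega_f$ at $y$ (Theorem \ref{WACC}, already established and not depending on this lemma) forces $\omega_f(y)=M=O_f(x)$, which is finite — a contradiction. If you want to keep a hands-on approach, you would essentially have to reprove that continuity statement at points of $M$, which is where the real work (Theorem \ref{LM}, Lemma \ref{WAC}, Claim 1 of Theorem \ref{WACC}) lies.
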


\begin{proof}
Let $x\in \textrm{P}(f)$ and $M$ an infinite minimal set such that $M\subset \alpha_{f}(x)$. Fix some $y\in M\subset \alpha_{f}(x)$, we can find an increasing sequence of integers $(m_{n})_{n\geq 0}$ and $x_{n}\in f^{-m_{n}}(x)$ such that $(x_{n})_{n\geq 0}$ converges to $y$. Clearly for any $n\geq 0$, we have $\omega_{f}(x_{n})=O_{f}(x)$. By Theorem \ref{WACC}, $\omega_{f}$ is continuous at $y$, therefore we get $\omega_{f}(y)= M = O_{f}(x)$, which is finite. A contradiction.
\end{proof}

Recall that given a subset $A$ of a topological space $X$, the arc connected component $C$ of $a\in A$ is defined as
$$C=\{y\in A: \textrm{there exists an arc } I\subset A \textrm{ joining } a \textrm{ and } y\}.$$

\begin{lem}\label{ARC et LARC}
\rm{Let $X$ be a regular curve and $A$ a subset of $X$. Then the following hold:\\
\rm{(i)} Every arc connected component of $A$ is closed in $A$.\\
\rm{(ii)} If $C$ is arcwise connected, then it is locally arcwise connected.}
\end{lem}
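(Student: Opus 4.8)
For part (i) I would reduce to the standard structure theory of regular curves. Recall the classical fact --- valid in every hereditarily locally connected continuum, hence in every regular curve, and already recorded for dendrites in Section~2 (cf.\ \cite{Kur}, \cite{Nadler}) --- that each connected subset of $X$ is arcwise connected. Granting this, for any $A\subseteq X$ the arc connected components of $A$ coincide with its connected components: an arc component is connected, so it lies in a connected component, and a connected component, being arcwise connected, lies in an arc component, and the two, being nested with a common point, agree. Since the closure in $A$ of a connected subset of $A$ is again connected, each connected component of $A$ equals its closure in $A$ by maximality, i.e.\ is closed in $A$; hence so is every arc connected component of $A$, proving (i).

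For (ii), let $C\subseteq X$ be arcwise connected, fix $c\in C$, and let $W=C\cap V$ be an arbitrary neighbourhood of $c$ in $C$, with $V$ open in $X$. Since $X$ is a regular curve, choose an open set $U$ with $c\in U\subseteq V$ and $\partial U$ finite, and let $D$ be the arc connected component of $c$ in $C\cap U$; then $D$ is arcwise connected and $D\subseteq C\cap U\subseteq W$, so it suffices to show that $D$ is a neighbourhood of $c$ in $C$. Suppose not. Since $U$ is open and $c\in U$ we have $c\notin\overline{C\setminus U}$, so $c\in\overline{C\setminus D}$ forces $c\in\overline{(C\cap U)\setminus D}$. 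Applying (i) to the set $C\cap U$, every arc component $E\neq D$ of $C\cap U$ is closed in $C\cap U$, whence $c\notin\overline{E}$; therefore one can choose a sequence $z_m\to c$ whose terms lie in pairwise distinct arc components $E_m\neq D$ of $C\cap U$.

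For each $m$, use arcwise connectedness of $C$ to pick an arc $I_m\subseteq C$ joining $z_m$ to $c$. As $z_m\notin D$, $I_m$ is not contained in $U$, so it meets the finite set $\partial U$; let $b_m$ be the first point of $\partial U$ met along $I_m$ starting from $z_m$, and let $G_m\subseteq I_m$ be the sub-arc from $z_m$ to $b_m$, so that $G_m\setminus\{b_m\}\subseteq U$. Passing to a subsequence, $b_m=b$ for all $m$, with $b\neq c$. Put $\rho=\tfrac13 d(b,c)>0$. For $m$ large, $d(z_m,c)<\rho$ while $d(b,c)=3\rho$, so along $G_m$ the distance to $c$ takes the value $2\rho$; let $G_m'$ be the sub-arc of $G_m$ from $z_m$ to the first point at distance $2\rho$ from $c$, so that $G_m'\subseteq\overline{B(c,2\rho)}$. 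Then $\mathrm{diam}(G_m')\geq\rho$ and $b\notin G_m'$, hence $G_m'$ is an arc in $C\cap U$ through $z_m$ and is therefore contained in $E_m$. Since the $E_m$ are pairwise disjoint, $(G_m')_{m}$ is an infinite family of pairwise disjoint subcontinua of $X$ with $\mathrm{diam}(G_m')\geq\rho$ for every $m$, contradicting Proposition~\ref{Fs}. Thus $D$ is a neighbourhood of $c$ in $C$, and $C$ is locally arcwise connected.

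The main obstacle is precisely this localization in (ii): nothing obvious prevents points of $C$ approaching $c$ from lying in arbitrarily many distinct arc components of a small set $C\cap U$, and the argument must exclude this. Both defining features of a regular curve are used for that --- the finiteness of $\partial U$, to force the connecting arcs $I_m$ to leave $U$ through one common boundary point $b$, and the finitely Suslinean property (Proposition~\ref{Fs}), invoked once the arcs have been trimmed to pairwise disjoint subcontinua of comparable diameter --- while part (i) enters only to guarantee that the components $E_m$ in play are genuinely different.
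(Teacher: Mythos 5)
Your treatment of part (ii) is essentially sound and is genuinely different from what the paper does: the paper simply quotes \cite[Corollary 5.5]{ARC2}, whereas you give a self-contained argument using only statement (i) applied to $C\cap U$, the finite-boundary property of regular curves, and Proposition~\ref{Fs}. Each step there checks out: the points $z_m\to c$ can indeed be taken in pairwise distinct arc components of $C\cap U$ precisely because of (i), the arcs $I_m\subseteq C$ must cross the finite set $\partial U$, and after trimming inside $\overline{B(c,2\rho)}$ you really do obtain pairwise disjoint subcontinua of diameter at least $\rho$, contradicting Proposition~\ref{Fs}.

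The genuine gap is in part (i), and it propagates into (ii) since (ii) uses (i). Your whole proof of (i) rests on the assertion that every connected subset of a hereditarily locally connected continuum is arcwise connected, which you call a classical fact and support only by the dendrite case recalled in Section 2 together with a vague pointer to \cite{Kur}, \cite{Nadler}. In that generality the assertion is not available and is very doubtful: were it true, every connected subset of an hlc continuum would trivially be $\sigma$-connected and the characterization problem solved, which would empty of content precisely the two papers the authors cite here, namely \cite{ARC} (whose very subject is to characterize the continua in which connected subsets are arcwise connected) and \cite{ARC2}. The dendrite case rests on uniqueness of arcs and does not transfer to regular curves, which contain circles (the Sierpi\'nski gasket, for instance). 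For regular curves the implication ``connected $\Rightarrow$ arcwise connected'' is in fact a theorem, but a nontrivial one, essentially the content of \cite{ARC}, and it is strictly stronger than the closedness of arc components asserted in (i); so as written you have assumed, without proof and without an accurate citation, something stronger than the statement to be proved. The paper's own proof of (i) is exactly a citation of \cite[Corollary 2.2]{ARC}, legitimate because regular curves are finitely Suslinean; to repair your argument you should either invoke that result (or the arcwise-connectedness theorem for finitely Suslinean continua with a precise reference) or supply an actual proof that arc components of an arbitrary subset of a regular curve are relatively closed --- your proposal currently contains no argument for this, and it is not a formality: a direct attempt (take $x\in\overline{B}\cap A\setminus B$, join points $b_n\to x$ of the arc component $B$ by arcs in $B$ and pass through a common boundary point of a small neighborhood of $x$) produces a limit continuum in $\overline{B}$ but no arc inside $A$ reaching $x$, so some genuinely new idea or the cited theorem is needed.
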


\begin{proof} Since a regular curve is finitely suslinean continuum, so the proof of (i) results from \cite[Corollary 2.2]{ARC} and the proof of (ii) results from \cite[ Corollary 5.5]{ARC2}.
\end{proof}

\begin{lem}\label{l66} Let $X$ be a regular curve and $f$ a monotone self mapping of $X$. If $x\in \mathrm{P}(f)$ and $O_{f}(x)\subsetneq \alpha_{f}(x)$, then $\Omega(f)\neq X$.
\end{lem}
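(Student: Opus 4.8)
The plan is to argue by contradiction: assume $\Omega(f)=X$, so that every point of $X$ is nonwandering and hence, by Theorem~\ref{t41}, recurrent and almost periodic. Under this hypothesis $X$ itself decomposes as a union of minimal sets, and in particular $x\in\textrm{P}(f)$ lies on a periodic orbit $O_f(x)$ which is then a minimal set. The strict inclusion $O_f(x)\subsetneq\alpha_f(x)$ gives a point $y\in\alpha_f(x)\setminus O_f(x)$, and since $\alpha_f(x)\subset\Omega(f)=X$ is a union of minimal sets (each $\alpha_f((x_n)_n)$ is minimal by Theorem~\ref{PAMM1}(2)), there is a minimal set $M\subset\alpha_f(x)$ with $M\neq O_f(x)$. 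By Lemma~\ref{TMSF}, $M$ must itself be a periodic orbit, so $M\subset\textrm{P}(f)$ is finite. Thus, first I would reduce to the situation of two disjoint periodic orbits $O_f(x)$ and $M$, with $M$ appearing in the $\alpha$-limit set of the periodic point $x$.

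Next I would exploit finiteness of boundaries on a regular curve. Fix $y\in M$; since $y\in\alpha_f(x)$ there are an increasing sequence $(m_n)_n$ and points $x_n\in f^{-m_n}(x)$ with $x_n\to y$. Choose disjoint open neighbourhoods $U$ of $M$ and $V$ of $O_f(x)$ with finite boundaries, with $k=\textrm{card}(\partial U)$, and arcs $I_n\subset U$ (for large $n$) joining $x_n$ to $y$ with $I_n\to\{y\}$. Applying $f^{m_n}$ and using that $f^{m_n}(x_n)=x\in O_f(x)\subset V$ while $f^{m_n}(y)$ eventually lies in $U$ (as $M$ is $f$-invariant and $y\in M$), the connected set $f^{m_n}(I_n)$ meets both $U$ and $V$, hence meets $\partial U$; by pigeonhole some fixed $b\in\partial U$ lies in $f^{m_n}(I_n)$ for infinitely many $n$, so $f^{-m_n}(b)\cap I_n\neq\emptyset$ and therefore $y\in\alpha_f(b)$. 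Since $\Omega(f)=X$, the point $b\in\textrm{R}(f)$, so $\alpha_f(b)$ is a minimal set by Theorem~\ref{PAMM1}(2), forcing $\alpha_f(b)=M$. But I want to produce \emph{infinitely many} distinct such periodic orbits or a non-null family of disjoint continua, which will contradict the regular-curve (finitely Suslinean) hypothesis — mirroring the mechanism in the proof of Theorem~\ref{LM} and Theorem~\ref{WACC}.

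The cleanest way to manufacture the contradiction is to avoid assuming $\Omega(f)=X$ on all of $X$ and instead use only what we need: since $y\in M\subset\textrm{R}(f)=\textrm{AP}(f)$ is almost periodic, and since $O_f(x)\subsetneq\alpha_f(x)$, I would try to show directly that $\alpha_f(b)$ being forced to equal $M$ (a \emph{fixed} finite set) contradicts the continuity of $\omega_f$ or $\alpha_f$ at non-periodic points (Theorem~\ref{WACC}), unless some point in the picture is periodic with unbounded period — which then yields, via Lemma~\ref{Orbit periodique d} and monotonicity, a pairwise disjoint family $(f^{-s_n}(b_n))_n$ of subcontinua each meeting both $U$ and $\partial U$ with diameters bounded below, i.e.\ a non-null disjoint family, contradicting Proposition~\ref{Fs}. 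The main obstacle I anticipate is the bookkeeping needed to pass from the single point $b$ to an infinite pairwise disjoint family: one must vary the neighbourhoods $U_m\supset M$ shrinking to $M$, extract $b_m\in\partial U_m$ with $y\in\alpha_f(b_m)$ for each $m$, show (by continuity of $\omega_f$ on $\Omega(f)$, Lemma~\ref{COMEGA}, together with Lemma~\ref{TMSF}) that the $b_m$ are eventually periodic with $O_f(b_m)\to M$ and that, after passing to a subsequence, the orbits $O_f(b_m)$ are pairwise disjoint, and finally apply Lemma~\ref{Orbit periodique d} plus monotonicity of $f^{s_m}$ to get the disjoint non-null family of preimage-continua joining a neighbourhood of $M$ to $\partial U_0$. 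This last step is exactly the standard regular-curve contradiction used repeatedly above, so once the reduction to two disjoint periodic orbits and the neighbourhood setup is in place, the argument closes.
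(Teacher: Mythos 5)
Your reduction (under $\Omega(f)=X$, find a periodic orbit $M\subset\alpha_f(x)$ disjoint from $O_f(x)$ via Theorem~\ref{t41} and Lemma~\ref{TMSF}, then pull back arcs $I_n$ joining $x_n\in f^{-m_n}(x)$ to $y\in M$ and pigeonhole on a finite boundary) is sound as far as it goes, but the endgame has a genuine gap. First, ``$b\in\mathrm{R}(f)$, so $\alpha_f(b)$ is minimal by Theorem~\ref{PAMM1}(2)'' is a misquotation: that statement requires $b\notin\mathrm{P}(f)$, and a boundary point $b$ may perfectly well be periodic (you half-acknowledge this, but it splits your argument into two cases, neither of which you close). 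In the non-periodic case the contradiction is immediate but not by the vague appeal to continuity of $\omega_f$ or $\alpha_f$: since $b\in\mathrm{R}(f)\setminus\mathrm{P}(f)$ one gets $\omega_f(b)=\alpha_f(b)=M$, hence $b\in M\subset U$, contradicting $b\in\partial U$. The real problem is the periodic case: the non-null disjoint family you propose does not exist as described. Your continua $f^{-s_m}(b_m)$ are anchored at a point of $O_f(b_m)$ and at a point within $1/m$ of $y\in M$; but $O_f(b_m)\to M$ (by Lemma~\ref{COMEGA}), so for large $m$ \emph{both} anchors lie inside any fixed neighbourhood $U_0$ of $M$. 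Nothing forces these preimage continua to reach $\partial U_0$, and ``meeting both $U$ and $\partial U$'' does not bound diameters below, so neither the pigeonhole on $\mathrm{card}(\partial U_0)$ nor Proposition~\ref{Fs} applies. (In the paper's analogous arguments, e.g.\ Lemma~\ref{WAC}, the second anchor is a point such as $x$ known to lie \emph{outside} $\overline V$; you have no such point here.)

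Moreover, the entire machinery is unnecessary: the hypothesis $\Omega(f)=X$ kills the statement in three lines, which is the paper's proof. By Theorem~\ref{t41}, $\Omega(f)=X$ gives $\mathrm{R}(f)=X$. Since $O_f(x)\subsetneq\alpha_f(x)$ and $O_f(x)$ is closed, not all backward orbits of $x$ can stay in $O_f(x)$, so there exist $t\notin O_f(x)$ and $n>0$ with $f^n(t)=x$. Then $\omega_f(t)=O_f(x)$, so $t\notin\omega_f(t)$, i.e.\ $t$ is not recurrent --- contradicting $\mathrm{R}(f)=X$. Note that this same observation is what would rescue your periodic case (any preimage of $b_m$ off $O_f(b_m)$ is non-recurrent), but then one may as well apply it directly to $x$ and skip the arcs, boundaries and null-family apparatus altogether.
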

	
\begin{proof}
Assume that $\Omega(f)=X$, by Theorem \ref{t41}, $\Omega(f) = \textrm{R}(f)$, thus $\textrm{R}(f)=X$. Observe that for some $n\geq 0, f^{-n}(x) \nsubseteq O_{f}(x)$, if not $O_{f}(x)=\alpha_{f}(x)$. So let $t\in X\setminus O_{f}(x)$ and $n>0$ such that $f^{n}(t)=x$, then $\omega_{f}(t) = O_{f}(x)$ and $t\notin O_{f}(x)$. This will lead to a contradiction since $t\in \textrm{R}(f)$.
\end{proof}
		
\begin{proof}[Proof of Theorem \ref{S=OSS}]
If $x\in  X_{\infty}\setminus \textrm{P}(f)$, then by Corollary \ref{c62}, $s\alpha_{f}(x) = \alpha_{f}(x)\subset \Omega(f)$ and so $ s\alpha_{f}(x) = \alpha_{f}(x)\cap \Omega(f)$. Now assume that $x\in \textrm{P}(f)$. The inclusion $s\alpha_{f}(x)\subset \alpha_{f}(x)\cap \Omega(f)$ follows from  Corollary \ref{c62}. Now let $y\in \alpha_{f}(x)\cap \Omega(f)$. By Lemma \ref{TMSF}, $\omega_{f}(y)$ is a periodic orbit and so $y\in \textrm{P}(f)$. If $O_{f}(y)\cap O_{f}(x)\neq \emptyset$, then $y\in O_{f}(x)\subset s\alpha_{f}(x)$. Now assume that $O_{f}(y)\cap O_{f}(x)=\emptyset$, then $\eta=d(O_{f}(x), O_{f}(y))>0$ and $O_{f}(x) \subsetneq\alpha_{f}(x)$. Therefore by Lemma \ref{l66}, $\Omega(f)\subsetneq X$. By Lemma \ref{az}, we can find an open set $U$ such that $\Omega(f)\subset U$ and \textrm{Mesh}$(U\setminus \Omega(f))<\frac{\eta}{2}$. Denote by $A=\displaystyle\bigcup_{n\geq 0}f^{-n}(x)\cup O_{f}(x)\cup O_{f}(y)$,  observe that $f(A)\subset A$.\\

\textit{Claim}: $A$ has a finite number of arc connected components.\\
Denote by $q$ the period of $x$. We have $$A = \displaystyle\bigcup\limits_{i=0}^{q-1}\big(\displaystyle\bigcup_{n\geq 0}f^{-i}(f^{-qn}(x))\big)\cup O_{f}(x)\cup O_{f}(y).$$ Since $f$ is monotone then for each $n\geq 0,\ f^{-qn}(x)$ is a subcontinuum of the regular curve $X$, thus arcwise connected, moreover it contains $x$. We conclude that $\displaystyle\bigcup_{n\geq 0}(f^{-qn}(x))$ is an arcwise connected subset of $X$. Again since $f$ is monotone and $\displaystyle\bigcup_{n\geq 0}(f^{-qn}(x))$ is arcwise connected then for each $0\leq i\leq q-1,\; \displaystyle\bigcup_{n\geq 0}f^{-i}(f^{-qn}(x))$ is also arcwise connected. Indeed for $a,b\in \displaystyle\bigcup_{n\geq 0}f^{-i}(f^{-qn}(x))$, we can find an arc $I$ joining $f^{i}(a)$ and $f^{i}(b)$ in $\displaystyle\bigcup_{n\geq 0}(f^{-qn}(x))$, thus we can find some arc $J$ in $f^{-i}(I)\subset \displaystyle\bigcup_{n\geq 0}f^{-i}(f^{-qn}(x))$ joining $a$ and $b$. We conclude that $A$ can be written as a finite union of arcwise connected subset of $X$, this end the proof of the claim.
\medskip
  Recall that $y\in \alpha_{f}(x)$ then we can find an increasing sequence of integer $(m_{n})_{n\geq 0}$ and $x_{n}\in f^{-m_{n}}(x)$ so that $(x_{n})_{n\geq 0}$ converges to $y$. For each $n\geq 0,\; x_{n}\in A$ which has a finite number of arc connected component by the claim above, namely $C_{1},\dots C_{m}$, so we may find $1\leq j\leq m$ such that for each $n\geq 0,\; x_{n}\in C_{j}$ so that $y\in \overline{C_{j}}$. By Lemma \ref{ARC et LARC}, (i), $C_{j}\cup\{y\}$ is arcwise connected. Therefore by Lemma \ref{ARC et LARC}, (ii), locally arcwise connected. Let then $I_{n}\subset \big(C_{j}\cup\{y\}\big) \cap B(y,\frac{1}{n})$ be an arc in $C_{j}\cup\{y\}$ joining $y$ and $x_{n}\in f^{-m_{n}}(x)$ so that $(I_{n})_{n\geq 0}$ converges to $\{y\}$.
We have for each $n\geq 0 \; \{x,f^{m_{n}}(y)\}\subset f^{m_{n}}(I_{n})\subset A\subset \big(X\setminus \Omega(f)\big) \cup \{O_{f}(x),O_{f}(y)\}$. Therefore for each $n\geq 0,\; f^{m_{n}}(I_{n})$ is an arcwise connected set of $X$ meeting $\Omega(f)$ only at $O_{f}(x),\;O_{f}(y)$. Therefore $f^{m_{n}}(I_{n}) \nsubseteq U$.
Thus for each $n\geq 0$, there exists $z_{n} \in f^{m_{n}}(I_{n}) \cap \partial U \subset X\setminus \Omega(f)$. As $\partial U$ is finite there exists $z\in \partial U$ such that $z_{n} = z$, for infinitely many $n$. Hence $f^{-m_{n}}(z) \cap I_{n} \neq \emptyset$ for infinitely many $n\geq 0$. Since $z\in \partial U \subset X\setminus \textrm{P}(f)$, so by Theorem \ref{PAMM1}, $\alpha_{f}(z)$ is a minimal set containing $y$. Thus $z\in X_{\infty}$ and $\alpha_{f}(z) = O_{f}(y)$. Recall that for each $n\geq 0$,  $z_{n}\in A$ and hence $z\in A$. Then for some $p>0$, we have $f^{p}(z)=x$, therefore any negative orbit of $z$ can be completed into a negative orbit of $x$ hence $s\alpha_{f}(z) \subset s\alpha_{f}(x)$. By Theorem \ref{PAMM1}, we have $s\alpha_{f}(z) = \alpha_{f}(z) = O_{f}(y)$ and so $y\in s\alpha_{f}(x)$.
\end{proof}

\begin{cor}\label{c53}
Let $X$ be a regular curve and $f$ a monotone self mapping of $X$. Then for every $x\in X$, $s\alpha_{f}(x)$ is a closed set.
\end{cor}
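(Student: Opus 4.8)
The plan is to derive Corollary~\ref{c53} as an almost immediate consequence of Theorem~\ref{S=OSS} together with facts already established about $\alpha$-limit sets and the nonwandering set. Fix $x\in X$. If $x\notin X_{\infty}$, then $\alpha_{f}(x)=\emptyset$, hence $s\alpha_{f}(x)=\emptyset$ by the equivalence recorded just after the definition of $s\alpha_{f}$, and the empty set is trivially closed. So the only case to treat is $x\in X_{\infty}$, where Theorem~\ref{S=OSS} gives $s\alpha_{f}(x)=\alpha_{f}(x)\cap\Omega(f)$.

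Now I would argue that both factors on the right-hand side are closed. By Proposition~\ref{p43}(i), $\alpha_{f}(x)$ is closed (and nonempty) for every $x\in X_{\infty}$. It remains to check that $\Omega(f)$ is closed. This is standard: if $x\notin\Omega(f)$, there is an open neighborhood $U$ of $x$ with $f^{n}(U)\cap U=\emptyset$ for all $n\in\mathbb{N}$, and then every point of $U$ is also nonwandering-free, so $U\subset X\setminus\Omega(f)$; hence $X\setminus\Omega(f)$ is open and $\Omega(f)$ is closed. (Alternatively one may invoke Theorem~\ref{t41}, $\Omega(f)=\mathrm{AP}(f)$, and the fact that on a regular curve the set of almost periodic points equals the union of minimal sets, which is closed by Theorem~\ref{LM}; but the elementary topological argument is cleaner and needs nothing.) Therefore $s\alpha_{f}(x)=\alpha_{f}(x)\cap\Omega(f)$ is an intersection of two closed sets, hence closed.

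There is essentially no obstacle here; the entire content has been absorbed into Theorem~\ref{S=OSS}, and the corollary is just packaging. The one point that deserves a line of care is the degenerate case $x\notin X_{\infty}$, since Theorem~\ref{S=OSS} is stated for $x\in X_{\infty}$ only — but in that case $s\alpha_{f}(x)=\emptyset$ directly from the definitions, so nothing is lost. I would write the proof in three short sentences: dispose of $x\notin X_{\infty}$, invoke Theorem~\ref{S=OSS} for $x\in X_{\infty}$, and note that $\alpha_{f}(x)$ (Proposition~\ref{p43}) and $\Omega(f)$ (elementary, or via Theorem~\ref{t41} and Theorem~\ref{LM}) are both closed, so their intersection is closed.
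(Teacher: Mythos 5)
Your proof is correct and follows essentially the same route as the paper: invoke Theorem~\ref{S=OSS} and observe that $\alpha_{f}(x)$ and $\Omega(f)$ are closed, so their intersection is closed. Your extra sentence disposing of the degenerate case $x\notin X_{\infty}$ (where $s\alpha_{f}(x)=\emptyset$) is a harmless refinement the paper leaves implicit.
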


\begin{proof}
The proof follows from Theorem \ref{S=OSS} \;(since $\Omega(f)$ and $\alpha_{f}(x)$ are closed).
\end{proof}

The following proposition improves Theorem \ref{RR}.

\begin{prop}\label{RRS}  Let $X$ be a regular curve and $f$ a monotone self mapping of $X$. Then
	$\Omega(f) = \{x\in X: x\in s\alpha_{f}(x)\}$.
\end{prop}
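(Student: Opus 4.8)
The plan is to derive Proposition \ref{RRS} directly from Theorem \ref{RR} and Theorem \ref{S=OSS}, which are both already available. The key observation is that $s\alpha_f(x) \subset \alpha_f(x)$ always holds (this is noted right after the definition of the special $\alpha$-limit set), so membership $x \in s\alpha_f(x)$ is a priori a stronger condition than $x \in \alpha_f(x)$. Thus one inclusion is essentially free: if $x \in s\alpha_f(x)$, then $x \in \alpha_f(x)$, and by Theorem \ref{RR} this forces $x \in \Omega(f)$. (One should also note $s\alpha_f(x) \neq \emptyset$ forces $x \in X_\infty$, but if $x \in s\alpha_f(x)$ this is automatic.)

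For the reverse inclusion, suppose $x \in \Omega(f)$. Then in particular $x \in X_\infty$, so Theorem \ref{S=OSS} applies and gives $s\alpha_f(x) = \alpha_f(x) \cap \Omega(f)$. By Theorem \ref{RR}, $x \in \Omega(f)$ implies $x \in \alpha_f(x)$; combining with $x \in \Omega(f)$ we get $x \in \alpha_f(x) \cap \Omega(f) = s\alpha_f(x)$. This closes the loop and establishes $\Omega(f) = \{x \in X : x \in s\alpha_f(x)\}$.

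There is no real obstacle here: the proposition is a formal consequence of the two theorems, and the whole argument is a two-line chase through the inclusions $s\alpha_f(x) \subset \alpha_f(x)$, Theorem \ref{RR} ($\Omega(f) = \{x : x \in \alpha_f(x)\}$), and Theorem \ref{S=OSS} ($s\alpha_f(x) = \alpha_f(x) \cap \Omega(f)$ for $x \in X_\infty$). If I were to worry about anything, it would only be making sure the edge case $x \notin X_\infty$ is handled correctly — but in that case $\alpha_f(x) = \emptyset$, so both $x \in s\alpha_f(x)$ and $x \in \Omega(f)$ fail (since $\Omega(f) \subseteq X_\infty$), and the asserted equality of sets still holds vacuously on that part of $X$. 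So the proof would simply be: the inclusion $\{x : x \in s\alpha_f(x)\} \subseteq \Omega(f)$ follows from $s\alpha_f(x) \subset \alpha_f(x)$ together with Theorem \ref{RR}; conversely, for $x \in \Omega(f) \subseteq X_\infty$, Theorem \ref{S=OSS} gives $s\alpha_f(x) = \alpha_f(x) \cap \Omega(f)$, and Theorem \ref{RR} gives $x \in \alpha_f(x)$, hence $x \in s\alpha_f(x)$.
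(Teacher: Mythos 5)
Your proof is correct and non-circular: Theorem \ref{RR} and Theorem \ref{S=OSS} both precede Proposition \ref{RRS} in the paper and neither depends on it, so deducing the proposition formally from them is legitimate, and your handling of the trivial case $x\notin X_{\infty}$ is fine. The paper, however, argues by a different route. For the inclusion $\{x\in X: x\in s\alpha_{f}(x)\}\subseteq \Omega(f)$ it invokes Corollary \ref{c62} ($s\alpha_{f}(x)$ is a union of minimal sets, hence contained in $\Omega(f)$) instead of your chain $s\alpha_{f}(x)\subset\alpha_{f}(x)$ plus Theorem \ref{RR}; both are one-line arguments. The genuine difference is in the converse: the paper does not use Theorem \ref{S=OSS} at all. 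From $x\in\Omega(f)=\mathrm{R}(f)$ (Theorem \ref{t41}) it gets $x\in\omega_{f}(x)$, picks a negative orbit $(x_{n})_{n\geq 0}$ of $x$ lying inside the strongly invariant set $\omega_{f}(x)$, and concludes by minimality (Theorem \ref{PAMM1}) that $\alpha_{f}((x_{n})_{n\geq 0})=\omega_{f}(x)\ni x$, hence $x\in s\alpha_{f}(x)$. Your version buys brevity and a purely formal derivation, but it rests on Theorem \ref{S=OSS}, which is the technical heart of the section; the paper's version is logically lighter (needing only $\Omega(f)=\mathrm{R}(f)$, minimality of $\omega$-limit sets, and the negative-orbit construction) and gives the slightly sharper information that the witnessing backward branch can be chosen inside $\omega_{f}(x)$ itself.
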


\begin{proof}
	Let $x\in X$ such that $x\in s\alpha_{f}(x)$. By Corollary \ref{c62}, $s\alpha_{f}(x)$ is a union of minimal sets, thus $x\in \Omega(f)$. Conversely, let $x\in \Omega(f)$. Then by Theorem \ref{t41}, $x\in \omega_{f}(x)$. Thus $x$ has a negative orbit $(x_{n})_{n\geq 0}\subset \omega_{f}(x)$. Therefore by minimality of $\omega_{f}(x)$ (cf. Theorem \ref{PAMM1}), we have $\alpha_{f}((x_{n})_{n\geq 0}) = \omega_{f}(x)$. Hence $x\in \alpha_{f}((x_{n})_{n\geq 0})\subset s\alpha_{f}(x)$. The proof is complete.
\end{proof}
\medskip

\subsection{\bf Further results on special $\alpha$-limit sets }

\begin{thm}[Continuity of special $\alpha$-limit maps]\label{tcs} Let $X$ be a regular curve and $f$ a monotone self mapping of $X$. Then the special $\alpha$-limit map $s\alpha_{f}$ is continuous everywhere except may be at the periodic points of $f$.
\end{thm}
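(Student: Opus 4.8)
The plan is to exploit the dichotomy already established: for $x \in X\setminus \mathrm{P}(f)$ the special $\alpha$-limit set coincides, depending on whether $x \in \Omega(f)$, either with $\alpha_f(x)$ (if $x \notin \Omega(f)$, by Corollary \ref{c62}(iii) via the inclusion $X\setminus\Omega(f) \subset X_\infty\setminus \mathrm{P}(f)$) or with the minimal set $\omega_f(x) = \alpha_f(x)$ (if $x \in \Omega(f)\setminus\mathrm{P}(f)$, again by Corollary \ref{c62}(iii) and Theorem \ref{PAMM1}(2),(4)). In both cases $s\alpha_f(x) = \alpha_f(x)$. So first I would observe that on the set $X_\infty \setminus \mathrm{P}(f)$ we simply have $s\alpha_f = \alpha_f$, and the continuity of $s\alpha_f$ at every point of $X_\infty\setminus\mathrm{P}(f)$ follows immediately from the continuity of $\alpha_f$ there, which is part of Theorem \ref{WACC}.

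The remaining issue is the behavior of $s\alpha_f$ at points of $(X\setminus\mathrm{P}(f)) \setminus X_\infty$, i.e. points $x$ with $s\alpha_f(x) = \emptyset$ (equivalently $\alpha_f(x) = \emptyset$, equivalently $x \notin X_\infty$). Here I would argue that $X\setminus X_\infty$ is open: indeed $X_\infty = \bigcap_n f^n(X)$, and since $X$ is compact and $f$ continuous each $f^n(X)$ is compact, and by a standard argument (the $f^n(X)$ form a decreasing sequence of continua) $x \notin X_\infty$ means $x \notin f^{N}(X)$ for some $N$, and $f^N(X)$ is closed, so a whole neighborhood of $x$ misses $f^N(X)$ and hence misses $X_\infty$. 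On that neighborhood $s\alpha_f \equiv \emptyset$. One must be slightly careful about the convention for the value of $s\alpha_f$ at such points; treating $s\alpha_f$ as a map into $2^X \cup \{\emptyset\}$ with the obvious topology (or restricting, as in Theorem \ref{WACC}'s $\alpha_f$, to $X_\infty$), continuity at these points is automatic because $s\alpha_f$ is locally constant there. If instead one only claims continuity of $s\alpha_f|_{X_\infty}$, then since $X_\infty$ is closed and $X_\infty\setminus\mathrm{P}(f)$ is relatively open in it, the previous paragraph already gives the full statement.

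The step I expect to be the genuine obstacle is making sure the reduction $s\alpha_f = \alpha_f$ on $X_\infty\setminus\mathrm{P}(f)$ is invoked correctly and that the continuity of $\alpha_f$ from Theorem \ref{WACC} is being applied on the right domain: Theorem \ref{WACC} asserts continuity of $\alpha_f\colon X_\infty \to 2^X$ away from $\mathrm{P}(f)$, so a convergent sequence $x_n \to x$ with $x \in X_\infty\setminus\mathrm{P}(f)$ must be handled allowing that some $x_n$ may fail to lie in $X_\infty$; but since $X_\infty$ is closed and $x \in X_\infty$ is in its interior relative to nothing — rather, one uses that eventually we may pass to the subsequence in $X_\infty$, or note that the complement contributes empty sets that cannot accumulate to the nonempty $\alpha_f(x)$ unless... — in fact the cleanest route is: if $x_n \to x$ with $x \notin \mathrm{P}(f)$, discard those $n$ with $x_n \notin X_\infty$ (they give $s\alpha_f(x_n) = \emptyset$; but if infinitely many such $n$ exist then $x$ would be a limit of points outside $X_\infty$, impossible since $X\setminus X_\infty$ is open and $x \in X_\infty$ — wait, that needs $x \in X_\infty$), so assume $x \in X_\infty\setminus\mathrm{P}(f)$, whence eventually $x_n \in X_\infty$ and $s\alpha_f(x_n) = \alpha_f(x_n) \to \alpha_f(x) = s\alpha_f(x)$ by Theorem \ref{WACC}; and if $x \notin X_\infty$, then $x \in X\setminus X_\infty$ which is open, so eventually $x_n \notin X_\infty$ and $s\alpha_f(x_n) = \emptyset = s\alpha_f(x)$. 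Assembling these two cases, together with the trivial locally-constant-$\emptyset$ behavior, yields continuity of $s\alpha_f$ at every point of $X\setminus\mathrm{P}(f)$.
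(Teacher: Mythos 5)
Your reduction of the continuity of $s\alpha_{f}$ at a point $x\in X_{\infty}\setminus \mathrm{P}(f)$ to the continuity of $\alpha_{f}$ (Theorem \ref{WACC}) rests on the claim that, for a sequence $x_{n}\to x$ in $X_{\infty}$, one eventually has $s\alpha_{f}(x_{n})=\alpha_{f}(x_{n})$. That identity is only available through Corollary \ref{c62}, (iii), which requires $x_{n}\notin \mathrm{P}(f)$; nothing forces the approximating points to be non-periodic. A non-periodic $x$ may perfectly well lie in $\overline{\mathrm{P}(f)}$ (the paper explicitly treats this situation, e.g. Case~2 in the proof of Theorem \ref{t41} and Case~1 in the proof of Theorem \ref{WACC}), and for periodic $x_{n}$ the inclusion $s\alpha_{f}(x_{n})\subset\alpha_{f}(x_{n})$ can be strict: in Example \ref{r59} one has $s\alpha_{f_{\infty}}(z_{0})\subsetneq \alpha_{f_{\infty}}(z_{0})=S_{\infty}$. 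So convergence of $\alpha_{f}(x_{n})$ to $\alpha_{f}(x)$ does not by itself give convergence of $s\alpha_{f}(x_{n})$ to $s\alpha_{f}(x)$, and your argument says nothing in exactly the delicate case where periodic points accumulate at the non-periodic point $x$. This is a genuine gap, not a presentational issue.

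The way the paper closes it is a minimality argument that never uses the identity $s\alpha_{f}(x_{n})=\alpha_{f}(x_{n})$ for the approximating points: for any sequence $x_{n}\to x$ in $X_{\infty}$ one only uses $s\alpha_{f}(x_{n})\subset\alpha_{f}(x_{n})$, together with the facts that each $s\alpha_{f}(x_{n})$ is a nonempty closed $f$-invariant set (Corollaries \ref{c53} and \ref{c62}); hence any limit point $F$ of $(s\alpha_{f}(x_{n}))_{n}$ in the Hausdorff metric is a nonempty closed invariant subset of $\lim_{n}\alpha_{f}(x_{n})=\alpha_{f}(x)$ (the last equality by Theorem \ref{WACC}). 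Since $\alpha_{f}(x)$ is a minimal set when $x\notin \mathrm{P}(f)$ (Theorem \ref{PAMM1}, (2)), necessarily $F=\alpha_{f}(x)=s\alpha_{f}(x)$, and compactness of $2^{X}$ then yields convergence of the whole sequence. Your discussion of points outside $X_{\infty}$ is harmless but tangential: as with $\alpha_{f}$ in Theorem \ref{WACC}, the map $s\alpha_{f}$ is considered on $X_{\infty}$, so the case you needed, and the one left unproved in your proposal, is precisely the one above.
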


\begin{proof}
	Let $x\in X_{\infty}\setminus \textrm{P}(f)$ and $(x_{n})_{n\geq 0}$ be a sequence of $X_{\infty}$ converging to $x$. By Theorem \ref{WACC}, the sequence $\alpha_{f}(x_{n})_{n\geq 0}$ converges to $\alpha_{f}(x)$. Let $F$ be any limit point of $s\alpha_{f}(x_{n})_{n\geq 0}$ with respect to the Hausdorff metric. Clearly $F\subset \alpha_{f}(x)$, moreover by Corollary \ref{c53}, $F$ is an invariant closed subset of $X$. By Theorem \ref{PAMM1}, (2), $\alpha_{f}(x)$ is a minimal set, therefore $F=\alpha_{f}(x)$. It turn out that $\alpha_{f}(x)$ is the unique limit point of the sequence $s\alpha_{f}(x_{n})_{n\geq 0}$. We conclude that the sequence $s\alpha_{f}(x_{n})_{n\geq 0}$ converges to $\alpha_{f}(x)$ (since $2^{X}$ is compact). Again by Theorem \ref{PAMM1}, (2), $\alpha_{f}(x)=s\alpha_{f}(x)$ and the result follows.
\end{proof}

 Note that it may happened that the maps $\omega_{f},\alpha_{f}$ and $s\alpha_{f}(x)$ are discontinuous at some periodic point: For example consider the map $f: X\to X$ given in \cite[Example 5.10]{MONOTONE}. 
 
 \begin{exe}$($\cite[Example 5.10]{MONOTONE}$)$ \label{e616} There exists a monotone map $f$ on an infinite star $X$ centered at a point $z_{0}\in \mathbb{R}^{2}$ and beam $I_{n},\; n\geq 0$ with endpoints  $z_{0}$ and $z_{n}$ satisfying the following properties.
 	\medskip
 	
 	\begin{itemize}
 		\item[(i)] For any $x\in X\setminus \{z_{n}: n\geq 0\}$ we have $\omega_{f}(x)=\{z_{0}\}$ and $\alpha_{f}(x) =\{z_{n_{x}}\}$, where $n_{x}\geq 1$ such that $x\in I_{n_{x}}$.
 		\item[(ii)] For any $n\geq 1$ we have $\alpha_{f}(z_{n})=\omega_{f}(z_{n})=\{z_{n}\}$.
 		\item[(iii)] $\omega_{f}(z_{0})=\{z_{0}\},\; \alpha_{f}(z_{0})=X$.
 	\end{itemize} 
 \end{exe}
 
 We have
  $\omega_{f}$ is discontinuous at $z_{n}$, for every $n\geq 0$ and $\alpha_{f}$ (resp. $s\alpha_{f}$) is discontinuous at $z_{0}$.\\

Denote by $\textrm{SA}(f)$ (respectively $\textrm{A}(f)$) the union of all special $\alpha$-limit sets (respectively, all $\alpha$-limit sets) of a map $f$. In \cite[Corollary 2.2]{kho}, it was shown that $\textrm{R}(f)\subset \textrm{SA}(f)$ holds for general dynamical system $(X,f)$.  For continuous map $f$ on graphs, it was shown that $\textrm{SA}(f)\subset\overline{\textrm{R}(f)}\subset \Lambda(f)$ (see \cite{sxl}, cf. \cite{bdlo}, \cite{sep alpha1}) and that there are continuous maps $f$ on dendrites with $\textrm{SA}(f)\nsubseteq\overline{\textrm{R}(f)}$ (cf. \cite{stsxq}). In the following theorem, we extend the above results to regular curves when we restricted to monotone maps.

\begin{thm}
Let $X$ be a regular curve and $f$ a monotone self mapping of $X$, the following hold:\\
    \rm{(i)} $\mathrm{SA}(f)=\mathrm{R}(f)$.\\
    \rm{(ii)}  $\mathrm{SA}(f)$ and $\mathrm{A}(f)$ are closed. \\
     \rm{(iii)} If $\mathrm{P}(f) = \emptyset$, then $\mathrm{SA}(f) = \mathrm{R}(f)=\mathrm{A}(f)$.\\
\end{thm}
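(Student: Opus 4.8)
\emph{Proof proposal.} The plan is to obtain (i), (iii) and the closedness of $\mathrm{SA}(f)$ directly from the results already established, and to isolate the closedness of $\mathrm{A}(f)$ as the only point needing real work. For (i): since $s\alpha_f(x)=\emptyset$ when $x\notin X_\infty$ and $s\alpha_f(x)\subseteq\Omega(f)$ when $x\in X_\infty$ by Corollary~\ref{c62}(i), one has $\mathrm{SA}(f)\subseteq\Omega(f)$, and $\Omega(f)=\mathrm{R}(f)$ by Theorem~\ref{t41}; conversely each $x\in\mathrm{R}(f)=\Omega(f)$ satisfies $x\in s\alpha_f(x)$ by Proposition~\ref{RRS}, so $\mathrm{R}(f)\subseteq\mathrm{SA}(f)$. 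Thus $\mathrm{SA}(f)=\mathrm{R}(f)$; as $\mathrm{SA}(f)=\Omega(f)$ and the nonwandering set is always closed, this also gives the first assertion of (ii). For (iii): when $\mathrm{P}(f)=\emptyset$ every $x\in X_\infty$ is non-periodic, so $\alpha_f(x)=s\alpha_f(x)$ is a minimal set by Corollary~\ref{c62}(iii); taking unions gives $\mathrm{A}(f)=\mathrm{SA}(f)$, which equals $\mathrm{R}(f)$ by (i).

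It remains to prove that $\mathrm{A}(f)$ is closed. First I would record the decomposition $\mathrm{A}(f)=\Omega(f)\cup\bigcup_{x\in\mathrm{P}(f)}\alpha_f(x)$: indeed, for $x\in X_\infty\setminus\mathrm{P}(f)$ the set $\alpha_f(x)$ is minimal (Theorem~\ref{PAMM1}(2)), hence contained in $\mathrm{R}(f)=\Omega(f)$, while $\Omega(f)=\mathrm{R}(f)=\mathrm{SA}(f)\subseteq\mathrm{A}(f)$. Since $\Omega(f)$ is closed and contained in $\mathrm{A}(f)$, it suffices to fix $y\in\overline{\mathrm{A}(f)}$ with $y\notin\Omega(f)$ and exhibit $x\in\mathrm{P}(f)$ with $y\in\alpha_f(x)$. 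Choosing $y_k\to y$ in $\mathrm{A}(f)$ and deleting the terms lying in the closed set $\Omega(f)$ (whose limit would remain in $\Omega(f)$), I may assume $y_k\in\alpha_f(x_k)\setminus\Omega(f)$ with $x_k\in\mathrm{P}(f)$ of period $q_k$; note also $y\notin\overline{\mathrm{P}(f)}$, since $\overline{\mathrm{P}(f)}\subseteq\Omega(f)$, so $\alpha_f(y)$ is an infinite minimal set.

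The core of the argument is to turn the ``approximate'' data (preimages of $x_k$ that are arbitrarily deep and arbitrarily close to $y_k\to y$) into genuine membership $y\in\alpha_f(x)$ for one fixed periodic $x$, and the plan is to mimic the proof of Theorem~\ref{S=OSS}. For each $k$, $S_{x_k}=\bigcup_{i\ge 0}f^{-i}(x_k)=\bigcup_{i=0}^{q_k-1}f^{-i}\bigl(\bigcup_{n\ge 0}f^{-q_k n}(x_k)\bigr)$ is a finite union of arcwise connected sets (by monotonicity of the iterates), whose closures are arcwise connected and locally arcwise connected (Lemma~\ref{ARC et LARC}), with $f(\overline{S_{x_k}})\subseteq\overline{S_{x_k}}$ and $\overline{S_{x_k}}\cap\Omega(f)\subseteq\mathrm{P}(f)$ (a recurrent preimage of $x_k$ must lie on $O_f(x_k)$, and every minimal subset of $\alpha_f(x_k)$ is a periodic orbit by Lemma~\ref{TMSF}). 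Since $y_k\in\alpha_f(x_k)\subseteq\overline{S_{x_k}}$, pick a deep preimage $z_k\in f^{-m_k}(x_k)$, $m_k\to\infty$, in the same arc component of $\overline{S_{x_k}}$ as $y_k$, join $y_k$ to $z_k$ by an arc $I_k\subseteq\overline{S_{x_k}}$ of diameter $<1/k$ (so $I_k\to\{y\}$), and, after passing to a subsequence, assume $x_k\to x^{\ast}$ with $O_f(x_k)\to\omega_f(x^{\ast})$ by Lemma~\ref{COMEGA}. Using Lemma~\ref{az} for the closed set $\Omega(f)$ and the neighbourhoods $\{z:d(z,\Omega(f))<1/m\}$, fix an open $U\supseteq\Omega(f)$ with $y\notin\overline U$ and $\mathrm{Mesh}(U\setminus\Omega(f))$ as small as desired; then for large $k$, $I_k\cap\overline U=\emptyset$ while $f^{m_k}(I_k)$ is a subcontinuum of $\overline{S_{x_k}}$ containing the periodic point $x_k\in U$ and meeting $\Omega(f)$ only inside $\mathrm{P}(f)$. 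If $f^{m_k}(I_k)\nsubseteq U$ for infinitely many $k$, then (as in the proofs of Theorem~\ref{S=OSS} and of the continuity of $\omega_f$, using that $X$ is finitely Suslinean so that pairwise disjoint families of subcontinua are null) one extracts $b\notin\overline{\mathrm{P}(f)}$ with $y\in\alpha_f(b)$; but then $\alpha_f(b)$ is a minimal set by Theorem~\ref{PAMM1}, so $y\in\alpha_f(b)\subseteq\mathrm{R}(f)=\Omega(f)$, a contradiction. Hence $f^{m_k}(I_k)\subseteq U$ for all large $k$, and the remaining, more delicate step — tracing the forward orbit of $z_k$ through the connected preimage $f^{-m_k}(x_k)$ to its first entrance into $U$ — should force the endpoints $x_k$ to stabilise, along a subsequence, to a single $x\in\mathrm{P}(f)$ with $z_k\in f^{-m_k}(x)$, $z_k\to y$, that is $y\in\alpha_f(x)$; this finishes the proof that $\mathrm{A}(f)$ is closed.

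I expect this stabilisation to be the genuine obstacle. One cannot simply pass to limits, because $\alpha_f$ fails to be upper semicontinuous at periodic points — by Example~\ref{e616}, $\alpha_f$ is discontinuous at $z_0$ while $\alpha_f(z_0)=X$ — so the regular-curve hypotheses (finite Suslinean-ness, hence nullity of pairwise disjoint families of subcontinua, together with local arcwise connectedness) must be invoked essentially to pin down a single periodic endpoint, exactly as in the proof of Theorem~\ref{S=OSS}.
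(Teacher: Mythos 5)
Parts (i), (iii) and the closedness of $\mathrm{SA}(f)$ in your proposal coincide with the paper's argument (Corollary \ref{c62}, Proposition \ref{RRS}, Theorem \ref{t41}) and are fine. The problem is the closedness of $\mathrm{A}(f)$: your decisive step --- that the periodic endpoints $x_k$ ``stabilise'' along a subsequence to a single $x\in\mathrm{P}(f)$ with $z_k\in f^{-m_k}(x)$, $z_k\to y$ --- is exactly the crux, and you leave it as an expectation rather than a proof. Nothing in your construction controls the dependence on $k$: the sets $\overline{S_{x_k}}$ and their arc components change with $k$, so the bookkeeping of Theorem \ref{S=OSS} (which is carried out for one \emph{fixed} periodic point) does not transfer; no argument is given that $(x_k)$ takes a constant value, or even finitely many values, along a subsequence, and, as you yourself note, $\alpha_f$ is not upper semicontinuous at periodic points, so this cannot be extracted by a limiting argument. (There is also a minor slip: to pick a single boundary point $b$ recurring in infinitely many $f^{m_k}(I_k)$ you need a neighborhood of $\Omega(f)$ with \emph{finite} boundary, which Lemma \ref{az} alone does not provide.) As written, (ii) for $\mathrm{A}(f)$ is incomplete.

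The paper sidesteps the stabilisation problem entirely by arguing by contradiction, and the argument is short. Suppose $y\in\overline{\mathrm{A}(f)}\setminus\mathrm{A}(f)$; since $\Omega(f)\subset\mathrm{A}(f)$ is closed one may assume $y\notin\Omega(f)$, and choose $y_k\to y$ with $y_k\in\alpha_f(b_k)$. If the $b_k$ took only finitely many values, a constant subsequence $b$ would give $y\in\alpha_f(b)$ because $\alpha_f(b)$ is closed, contradicting $y\notin\mathrm{A}(f)$; and each $b_k\in\mathrm{P}(f)$ (as you argued: $\alpha_f(b_k)\ni y_k\notin\Omega(f)$ cannot be minimal, so Theorem \ref{PAMM1} applies), so after passing to a subsequence the orbits $O_f(b_k)$ are pairwise disjoint, hence so are the backward sets $S_{b_k}$ by Lemma \ref{Orbit periodique d}. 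Now take $U$ a neighborhood of $y$ (not of $\Omega(f)$) with finite boundary and $\overline{U}\cap\Omega(f)=\emptyset$. For large $k$ there is $s_k$ with $f^{-s_k}(b_k)\cap U\neq\emptyset$ (preimages of $b_k$ accumulate on $y_k\in U$), while $f^{-s_k}(b_k)$ also contains a point of $O_f(b_k)\subset\Omega(f)\subset X\setminus\overline{U}$; by monotonicity $f^{-s_k}(b_k)$ is connected, so it meets the finite set $\partial U$, and these sets are pairwise disjoint --- a pigeonhole contradiction. Your construction would need precisely such a disjointness-plus-finite-boundary input to close the gap; adopting the contradiction scheme removes the need to produce a single periodic $x$ with $y\in\alpha_f(x)$ at all.
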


\begin{proof} Recall that by Theorem \ref{t41}, $\Omega(f) = \textrm{R}(f)$.
Assertion (i) follows immediately from Corollary \ref{c62} and Proposition \ref{RRS}.

\medskip
(iii): Assume now that $\textrm{P}(f)=\emptyset$. Then by Corollary \ref{c62}, $s\alpha_{f}(x)=\alpha_{f}(x)$ is a minimal set, for any $x\in X_{\infty}$, so assertion (iii) follows.

\medskip
(ii): By (i), $\Omega(f)=\textrm{R}(f)=\textrm{SA}(f)$ and so $\textrm{SA}(f)$ is closed. Let us show that $\mathrm{A}(f)$ is closed.
Let $(x_{n})_{n\geq 0}$ be a sequence of $\textrm{A}(f)$ converging to some $x\in X$. We can assume that $x\notin \Omega(f)$ since otherwise $x\in \Omega(f) \subset \textrm{A}(f)$ and we are done. For each $n\geq 0$, let $b_{n}\in X_{\infty}$ such that $x_{n}\in \alpha_{f}(b_{n})$. Suppose that $x\notin \textrm{A}(f)$, then by passing to a subsequence if needed, one can assume that the sequence $(b_{n})_{n\geq 0}$ is infinite (since otherwise $x\in \textrm{A}(f)$).

Since $x\notin \Omega(f)$, so for $n$ large enough, $x_{n}\notin \Omega(f)$ and therefore $\alpha_{f}(b_{n})$ is not a minimal set. Therefore by Theorem \ref{PAMM1}, $b_{n}\in \textrm{P}(f)$. So by passing to a subsequence if needed, the family $(O_{f}(b_{n}))_{n\geq 0}$ is pairwise disjoint. Let $U$ be an open neighborhood of $x$ with finite boundary such that $\overline{U}\cap \Omega(f)=\emptyset$. As $x_{n}\in \alpha_{f}(b_{n})$ and the sequence $(x_{n})_{n\geq 0}$ converges to $x$, there exists $N\geq 0$ such that for any $n\geq N$, we can find $s_{n}\geq 0$ such that $f^{-s_{n}}(b_{n})\cap U \neq \emptyset$. Since $b_{n}\in \textrm{P}(f)$, we have $f^{-s_{n}}(b_{n}) \nsubseteq U$. Since $f$ is monotone, it follows that $f^{-s_{n}}(b_{n})\cap \partial U \neq \emptyset$, for any $n\geq N$. By Lemma \ref{Orbit periodique d} the family $(f^{-s_{n}}(b_{n}))_{n\geq N}$ is pairwise disjoint and as proven above $f^{-s_{n}}(b_{n})\cap \partial U \neq \emptyset$, for each $n\geq N$, this will lead to a contradiction since $\partial U$ is finite.
\end{proof}
\medskip

\begin{rem}\rm{
  If  $\mathrm{P}(f)\neq \emptyset$, we have the inclusion $\textrm{SA}(f)\subset \textrm{A}(f)$ which can be strict for monotone maps on regular curves. Indeed, consider the map $f_{\infty}: S_{\infty}\to S_{\infty}$ of Example \ref{r59}. We have that  \\ $\textrm{SA}(f_{\infty})= \textrm{R}(f_{\infty})= \textrm{P}(f_{\infty}) \subsetneq \textrm{A}(f_{\infty}) = S_{\infty}$.}
\end{rem}

\begin{thm}\label{WA}
Let $X$ be a regular curve and $f$ a monotone self mapping of $X$. Then for any $x\in \mathrm{P}(f),\; s\alpha_{f}(x)$ is either a finite union of periodic orbits or an infinite sequence of periodic orbits converging to $O_{f}(x)$.
\end{thm}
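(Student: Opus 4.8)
The plan is to analyze the structure of $s\alpha_f(x)$ for a periodic point $x$ by combining Theorem \ref{S=OSS} with Lemma \ref{TMSF} and the "no non-null disjoint family" property of regular curves (Proposition \ref{Fs}). First I would recall from Theorem \ref{S=OSS} that $s\alpha_f(x) = \alpha_f(x)\cap\Omega(f)$, and from Corollary \ref{c62}(ii) that $s\alpha_f(x)$ is a union of minimal sets. Since $x\in \mathrm{P}(f)$, Lemma \ref{TMSF} forces every minimal set contained in $\alpha_f(x)$ — hence every minimal set contained in $s\alpha_f(x)$ — to be a periodic orbit. Therefore $s\alpha_f(x)$ is a union of periodic orbits, say $s\alpha_f(x) = \bigcup_{i\in I} O_f(y_i)$ with each $O_f(y_i)$ a periodic orbit and $O_f(x)\subset s\alpha_f(x)$. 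It remains to show that either $I$ is finite, or the orbits $O_f(y_i)$ form a sequence converging (in the Hausdorff metric, equivalently as sets) to $O_f(x)$.

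Next I would suppose $I$ is infinite and show the orbits must accumulate on $O_f(x)$ and on nothing else. Since $s\alpha_f(x)$ is closed (Corollary \ref{c53}) and contained in the regular curve $X$, any sequence of distinct periodic orbits $O_f(y_{i_n})$ has, after passing to a subsequence, a Hausdorff limit $L\subset s\alpha_f(x)$; by Theorem \ref{LM} (limits of minimal sets are minimal) and Lemma \ref{TMSF} again, $L$ is a periodic orbit, so $L = O_f(z)$ for some periodic $z\in s\alpha_f(x)$. The key point is to rule out $L\neq O_f(x)$: if the $O_f(y_{i_n})$ are pairwise disjoint and their limit $L$ is disjoint from $O_f(x)$, one argues exactly as in the proof of Theorem \ref{LM} — pick $y\in L$, approximate it by points $y_{i_n}\in O_f(y_{i_n})$, join $y$ to $y_{i_n}$ by arcs $I_n\to\{y\}$, and use that $y\in\alpha_f(x)$ to produce, via monotonicity of $f$ and finiteness of a suitable boundary $\partial U$ separating $O_f(x)$ from a neighborhood of $L$, a point $b\in\partial U$ with $O_f(x)\subset\alpha_f(b)$; since $b\notin\mathrm{P}(f)$ (as $\partial U$ is chosen in $X\setminus\mathrm{P}(f)$, which is possible when $\Omega(f)\neq X$ by Lemma \ref{l66}, or handled directly otherwise), $\alpha_f(b)$ is minimal, contradicting that it contains the non-minimal-unless-trivial orbit structure — more precisely this forces $O_f(x)$ to be that minimal set and $b\in\mathcal{B}$-type reasoning yields the contradiction with $\partial U$ finite. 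Hence every Hausdorff limit of distinct orbits in the family equals $O_f(x)$, which means the family $\{O_f(y_i):i\in I\}$, if infinite, can be enumerated as a sequence converging to $O_f(x)$.

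Finally I would assemble these observations: if $I$ is finite we are in the first alternative; if $I$ is infinite, by the previous paragraph the only accumulation point of the set of orbits is $O_f(x)$, so the orbits form a sequence of (distinct, eventually pairwise disjoint) periodic orbits converging to $O_f(x)$ in the Hausdorff metric, which is the second alternative; moreover $O_f(x)$ itself is one of the orbits in the union, consistent with being the limit. The main obstacle I anticipate is the step ruling out a "foreign" Hausdorff limit $L\neq O_f(x)$: this is where the regular-curve hypothesis is essential and where one must carefully adapt the arc-pushing argument from Theorem \ref{LM} to the backward-orbit setting of $\alpha_f(x)$, in particular choosing the separating open set $U$ with finite boundary so that monotonicity of $f^{-s_n}$ produces pairwise disjoint continua meeting $\partial U$, contradicting Proposition \ref{Fs}. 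Once that is in place, everything else is bookkeeping about Hausdorff convergence of a sequence of closed sets whose only accumulation set is $O_f(x)$.
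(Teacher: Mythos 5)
Your first and last paragraphs do match the paper's strategy: by Corollary \ref{c62} and Lemma \ref{TMSF}, $s\alpha_f(x)$ is a union of periodic orbits, and by Theorem \ref{LM} any Hausdorff limit of a sequence of these orbits is minimal, hence a periodic orbit $L$; the theorem then reduces to excluding an accumulation orbit $L$ disjoint from $O_f(x)$. But that exclusion is the entire core of the proof, and your sketch of it does not work as stated. First, ``argue exactly as in Theorem \ref{LM}'' does not transfer: the Claim there rests on closedness of the basin $\mathcal{B}(P)$ of an \emph{infinite} minimal set (Theorem \ref{PAMM1}(3)), which is unavailable here since every minimal set in sight is a periodic orbit, and the periodic-orbit part of Theorem \ref{LM}'s proof uses that the approximating minimal sets are \emph{not} contained in the chosen neighborhood, whereas your orbits $O_f(y_{i_n})$ converge to $L$ and eventually lie entirely inside any neighborhood of $L$. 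Second, your choice of a finite boundary $\partial U\subset X\setminus \mathrm{P}(f)$ ``separating $O_f(x)$ from a neighborhood of $L$'' is unjustified: Lemma \ref{l66} only gives $\Omega(f)\neq X$, which says nothing about where the complement of $\Omega(f)$ sits; and since both $O_f(x)$ and $L$ lie in $\Omega(f)$, you cannot take $U\supset\Omega(f)$ (whose boundary would automatically avoid $\mathrm{P}(f)$) and still separate them. Third, even granting your pigeonhole step, what it yields is a point $b\in\partial U$ with $y\in\alpha_f(b)$ and $\alpha_f(b)$ minimal, hence $\alpha_f(b)=L$ (or $=O_f(x)$ in the other variant); that is not by itself a contradiction, and the inclusion $O_f(x)\subset\alpha_f(b)$ you announce is never actually obtained, since the preimages you control accumulate on $y$, not on $O_f(x)$.

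The paper closes this gap with machinery your proposal never invokes: it forms the $f$-invariant set $A=\bigcup_{n\geq0}f^{-n}(x)\cup O_f(x)\cup\{O_f(a_i)\}$, shows (as in the proof of Theorem \ref{S=OSS}) that $A$ has finitely many arc components, and chooses $k+1$ pairwise disjoint arcs \emph{inside} $A$ joining the accumulating periodic points $a_i$ to preimages of $x$. Forward images of these arcs stay in $A$, and since $A\cap\Omega(f)$ consists only of the finitely many orbits in play, each image contains an arc from $O_f(x)$ to $O_f(a_i)$ whose interior lies in $X\setminus\Omega(f)$ and has diameter bounded below; Lemma \ref{az}, applied to an open set $O\supset\Omega(f)$ with $\mathrm{card}(\partial O)=k$ and small $\mathrm{Mesh}(O\setminus\Omega(f))$ (legitimate because $\partial O$ automatically avoids $\Omega(f)\supset\mathrm{P}(f)$), forces each image to cross $\partial O$; the pigeonhole on $k+1$ disjoint arcs versus $k$ boundary points produces $z\in\partial O$ with $\limsup_{s\to+\infty}\mathrm{diam}(f^{-s}(z))>0$, and the null-family property of the regular curve then forces $z\in\mathrm{P}(f)$, contradicting $z\notin\Omega(f)$. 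Some substitute for this control of where the connecting continua may meet $\Omega(f)$ is exactly what is missing from your argument, so as written the proposal has a genuine gap at its decisive step.
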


\begin{proof}
Let $x\in \textrm{P}(f)$. By Corollary \ref{c62} and Lemma \ref{TMSF}, $s\alpha_{f}(x)$ is a union of periodic orbits.

 Assume that $s\alpha_{f}(x)$ is infinite and that there is an accumulation point $a$ of $s\alpha_{f}(x)$ such that $a\notin O_{f}(x)$. We can find an infinite sequence $(a_{n})_{n\geq 0}$ of $s\alpha_{f}(x)$ converging to $a$. 
 By Theorem \ref{LM}, $(O_{f}(a_{n}))_{n\geq 0}$ converges to a minimal set $L$. As $a_{n}\in \textrm{P}(f)$, then $a\in L$ and hence $L = O_{f}(a)$. Then for $i$ large enough, say $i\geq p$, $$d\left(O_{f}(a_{i}), ~O_{f}(x)\right)\geq \frac{d\Big(O_{f}(x), ~O_{f}(a)\Big)}{2}.$$ Since $\Omega(f) \subsetneq X$ (Lemma \ref{l66}), so by Lemma \ref{az}, there is an open set $O$ such that $\textrm{card}(\partial O)=k$ and $$\textrm{Mesh}(O\setminus \Omega(f))<\frac{d\left(O_{f}(x), ~O_{f}(a)\right)}{2}.$$ As done in the proof of Theorem \ref{S=OSS}, the set $$A = \displaystyle\cup_{n\geq 0}f^{-n}(x)\cup \{O_{f}(a_{i}), ~p\leq i\leq k+p\}$$ is $f$-invariant and has a finite number of arc connected components. For each $p\leq i\leq k+p$, let $C_{i}$ be the arc connected component of $A$ containing $a_{i}$.
Since $\;a_{i}\in \alpha_{f}(x)$, for each $p\leq i\leq k+p$, we can find an increasing sequence of integers $(m_{n,i})_{n\geq 0}$ and a sequence $(x_{n,i})_{n\geq 0}$ which converges to $a_{i}$, where $x_{n,i}\in f^{-m_{n,i}}(x)$. By the same arguments as in the proof of Theorem \ref{S=OSS}, we can assume that $x_{n,i}\in C_{i}$ for infinitely many $n\geq 0$. Now as the $a_{i}$ are pairwise distinct and the $C_{i}$ are locally arcwise connected, $p\leq i\leq k+p$, so for $n$ large enough, we may find a sequence of pairwise disjoint arcs $(I_{n,i})_{n\geq 0}$ in $C_{i}$ joining $x_{n,i}$ and $a_{i}$ which converges to $\{a_{i}\}$. Choose $N$ large enough and set $I_{i}=I_{N,i}$, $p\leq i\leq k+p$. So $(I_{i})_{p\leq i\leq k+p}$ is a family of $k+1$ pairwise disjoint arcs in $A$ joining $a_{i}$ and $x_{N,i}\in f^{-m_{N,i}}(x)$.
Set
$$\displaystyle \eta:= \min\{d(I_{i}, I_{j}): p\leq i<j\leq k+p\}>0$$ and $$m>\max \{m_{N,i}: p\leq i\leq k+p\}.$$ Since $A$ is $f$-invariant, $f^{m}(I_{i}) \subset A$. Moreover $f^{m}(I_{i})$ is a subcontinuum of $X$ which meets the orbits of $x$ and $a_{i}$. As $$A\cap \Omega(f) = O_{f}(x) \cup \{O_{f}(a_{i}): p\leq i\leq k+p\},$$ then we can find an arc $J\subset f^{m}(I_{i})$ joining $x^{\prime}\in O_{f}(x)$ and $b_{i}\in O_{f}(a_{i})$ such that $J\setminus \{x^{\prime}, b_{i}\} \subset A\setminus \Omega(f)$. Obviously $J\setminus \{x^{\prime}, b_{i}\}$ is connected and $\textrm{diam}(J\setminus \{x^{\prime}, b_{i}\})\geq \frac{d\left(O_{f}(x), ~O_{f}(a)\right)}{2}$. Therefore $J \nsubseteq O$ and $J \cap O \neq \emptyset$. Thus $J\cap \partial O \neq \emptyset$ and therefore $f^{m}(I_{i})\cap \partial O \neq \emptyset$, for each $p\leq i\leq k+p$. It follows that there exists $z_{m}\in \partial O$ such that $z_{m}\in f^{m}(I_{i})\cap f^{m}(I_{j})$, for some $p\leq i<j\leq k+p$. Hence $\textrm{diam}(f^{-m}(z_{m}))>\eta$. As $\partial O$ is finite, there exists $z\in \partial O$ such that $z=z_{m}$ for infinitely many $m$, thus $\displaystyle\limsup_{s\to +\infty}\textrm{diam}(f^{-s}(z))>0$. This implies that the family $(f^{-n}(z))_{n\geq 0}$ is not pairwise disjoint (otherwise it will be a null family), and hence for some $q_1<q_2$, we have $f^{-q_1}(z)\cap f^{-q_2}(z) \neq \emptyset$. Thus $z=f^{q_2-q_1}(z)$ and so $z\in \textrm{P}(f)$. As $z\in \partial O$ and $\textrm{P}(f)\subset \Omega(f) \subset O$, then $z\in X\setminus \textrm{P}(f)$. A contradiction.\\

 We conclude that any accumulation point of $s\alpha_{f}(x)$ is a point of $O_{f}(x)$. It turns out that $s\alpha_{f}(x)$ is a compact space having a finite set of accumulation points, hence it is countable.
\end{proof}
\medskip
	
	The following corollary is in contrast to the properties of $\omega$-limit sets.
	
\begin{cor}\label{c12}
Let $X$ be a regular curve and $f$ a monotone self mapping of $X$ and $x\in X_{\infty}$. If $s\alpha_{f}(x)$ contains an isolated point, then $s\alpha_{f}(x)\subset \mathrm{P}(f)$.
\end{cor}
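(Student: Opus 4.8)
The plan is to distinguish the two cases provided by Theorem \ref{WA}, which says that for $x \in \mathrm{P}(f)$ the set $s\alpha_f(x)$ is either a finite union of periodic orbits or an infinite sequence of periodic orbits converging to $O_f(x)$; in both cases $s\alpha_f(x) \subset \mathrm{P}(f)$, which is exactly the conclusion we want. So the whole point is to reduce the statement of Corollary \ref{c12} to the hypothesis $x \in \mathrm{P}(f)$, and the assumption ``$s\alpha_f(x)$ contains an isolated point'' is precisely what forces this. First I would invoke Corollary \ref{c62}(iii): if $x \in X_\infty \setminus \mathrm{P}(f)$, then $s\alpha_f(x) = \alpha_f(x)$ is a minimal set. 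An infinite minimal set in a regular curve (indeed in any compact metric space) is perfect, hence has no isolated point; and a finite minimal set is a periodic orbit, which is already contained in $\mathrm{P}(f)$ and whose points are all isolated but then $s\alpha_f(x)\subset\mathrm{P}(f)$ holds trivially. Either way the hypothesis either fails or the conclusion is immediate.

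The remaining case is $x \in \mathrm{P}(f)$. Here I would simply apply Theorem \ref{WA}: $s\alpha_f(x)$ is a union of periodic orbits, so every point of $s\alpha_f(x)$ lies in $\mathrm{P}(f)$, giving $s\alpha_f(x) \subset \mathrm{P}(f)$ with no further work. Note that in this branch we do not even need the isolated-point hypothesis; it is only needed to rule out the case that $x$ is non-periodic and $s\alpha_f(x) = \alpha_f(x)$ is an infinite minimal set.

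I do not anticipate a genuine obstacle, since the substantive content has already been extracted in Corollary \ref{c62} and Theorem \ref{WA}; the only thing to get right is the elementary observation that an infinite minimal set has no isolated points (it is closed and invariant, so the closure of the orbit of any of its points is the whole set, which would be impossible if a point were isolated and the set infinite — or one can quote that minimal sets are perfect unless finite). Thus the proof is a short case split: if $x \notin \mathrm{P}(f)$ use Corollary \ref{c62}(iii) together with this perfectness remark to see the isolated-point hypothesis is vacuous; if $x \in \mathrm{P}(f)$ use Theorem \ref{WA} directly.
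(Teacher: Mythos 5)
Your proof is correct and follows essentially the same route as the paper: the same case split on whether $x\in \mathrm{P}(f)$, with the non-periodic case handled via Corollary \ref{c62}(iii) and the observation that a minimal set containing an isolated point must be a periodic orbit. The only cosmetic difference is that in the periodic case you cite Theorem \ref{WA}, whereas the paper invokes the lighter Lemma \ref{TMSF} (with Corollary \ref{c62}(ii)); both are proved before this corollary, so either citation is fine.
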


\begin{proof}
If $x\notin \textrm{P}(f)$, then $s\alpha_{f}(x)$ is a minimal set and hence it is a periodic orbit since it contains $z$ which is an isolated point in it. If $x\in \textrm{P}(f)$, then from Lemma \ref{TMSF}, we have $s\alpha_{f}(x)\subset \textrm{P}(f)$.
\end{proof}

 \begin{cor}\label{c512}
 A countable $s\alpha$-limit set for a monotone map on a regular curve is a union of periodic orbits.
 \end{cor}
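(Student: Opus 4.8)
The plan is to deduce this directly from the structural results already obtained, chiefly Corollary \ref{c62}, Corollary \ref{c53} and Corollary \ref{c12}. First I would fix $x\in X_{\infty}$; if $x\notin X_{\infty}$ then $s\alpha_{f}(x)=\emptyset$, which is (vacuously) a union of periodic orbits and there is nothing to prove. By Corollary \ref{c53}, $s\alpha_{f}(x)$ is a closed subset of the compact space $X$, hence is itself compact; being in addition countable and nonempty, it is a nonempty countable compact metric space.

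The key step is to observe that such a space cannot be perfect: a nonempty perfect subset of a complete metric space is uncountable (Cantor--Bendixson / Baire category theorem). Hence $s\alpha_{f}(x)$ possesses an isolated point, and this is exactly where the countability hypothesis enters. Corollary \ref{c12} then applies and yields $s\alpha_{f}(x)\subset \mathrm{P}(f)$. Finally, by Corollary \ref{c62}(ii), $s\alpha_{f}(x)$ is a union of minimal sets; since a minimal set contained in $\mathrm{P}(f)$ is necessarily a single periodic orbit, we conclude that $s\alpha_{f}(x)$ is a union of periodic orbits.

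Alternatively, one could split into cases according to whether $x$ is periodic: if $x\in \mathrm{P}(f)$, Theorem \ref{WA} already presents $s\alpha_{f}(x)$ as a finite union of periodic orbits or an infinite sequence of periodic orbits converging to $O_{f}(x)$, with no countability assumption needed; if $x\notin \mathrm{P}(f)$, then by Corollary \ref{c62}(iii), $s\alpha_{f}(x)=\alpha_{f}(x)$ is a minimal set, and a countable minimal set is finite, hence a periodic orbit. I expect no genuine obstacle in either route; the only point meriting a word of justification is the elementary topological fact that a nonempty countable compact metric space has an isolated point.
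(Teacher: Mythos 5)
Your proposal is correct, and in fact your ``alternative'' route is essentially the paper's own proof: the paper assumes $s\alpha_{f}(x)$ is not minimal (the minimal case being a finite minimal set, i.e.\ a periodic orbit), deduces from Theorem \ref{PAMM1}(2) (equivalently Corollary \ref{c62}(iii)) that $x\in \mathrm{P}(f)$, and then quotes Theorem \ref{WA}. Your primary route is genuinely different and works: since $s\alpha_{f}(x)$ is closed (Corollary \ref{c53}), it is a nonempty countable compact metric space, hence not perfect, hence has an isolated point (Baire/Cantor--Bendixson); Corollary \ref{c12} then gives $s\alpha_{f}(x)\subset \mathrm{P}(f)$, and Corollary \ref{c62}(ii) exhibits it as a union of minimal sets, each of which, meeting $\mathrm{P}(f)$, is a single periodic orbit. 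What this buys is independence from the comparatively heavy Theorem \ref{WA} (whose extra content --- that in the periodic case the orbits accumulate only on $O_{f}(x)$ --- is not needed for the corollary), at the cost of invoking the closedness result, i.e.\ Theorem \ref{S=OSS} through Corollary \ref{c53}, and the small topological fact about countable compacta, which you rightly flag as the only point needing a word of justification (as does the implicit fact, also used by the paper, that a minimal set with an isolated point is a periodic orbit). Both arguments are sound; the paper's is shorter given Theorem \ref{WA}, yours is more self-contained with respect to it.
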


 \begin{proof}
 Let $x\in X_{\infty}$ such that $s\alpha_{f}(x)$ is countable. We may that $s\alpha_{f}(x)$ is not minimal; otherwise $s\alpha_{f}(x)$ will be a finite minimal set and we are done. Therefore by Theorem \ref{PAMM1}, (2), $x\in \textrm{P}(f)$ and by Theorem \ref{WA}, the result follows.
 \end{proof}

\begin{rem} \rm{ Notice that Corollaries \ref{c12} and \ref{c512} extend those of \cite[Theorem 20 and Corollary 21]{hr} to monotone maps on regular curves.}
\end{rem}	
\medskip

\begin{rem} \label{r58} \ \rm{\begin{itemize}
			\item[(1)] A special $\alpha$-limit set can be totally periodic and infinite for monotone map on a regular curve; for example, in Example \ref{e616}, we have $s\alpha_{f}(z_{0})$ is infinite and composed of fixed points. This is in contrast to some properties of $\omega$-limit sets (cf. \cite[Theorem 2.4]{MONOTONE}).\\
		\item[(2)] In \cite{an}, it was constructed a continuous self-mapping (not monotone) of a dendrite having totally periodic $\omega$-limit set with unbounded periods. In Example \ref{r59} below, we construct analogously, a monotone map on a dendrite having a totally periodic special $\alpha$-limit set with unbounded periods.
			\end{itemize}}
\end{rem}

\begin{exe} \ \label{r59}\rm{
The idea is to slightly change the map $f$ in Example \ref{e616} so that $s\alpha_{f}(z_{0})$ is infinite and composed of periodic orbits with unbounded period. Let $z_{0}$ some point of the plan $\mathbb{R}^{2}$, $N\geq 1$ and denote by $S_{N}$, the $N$-star centered at $z_{0}$. For $N=1$, we define $f_{1}: S_{1} \longrightarrow S_{1}$ identified as the map $f_{1} = g: [0,1]\longrightarrow [0,1]:  x\longmapsto \max\{0,2x-1\}$, where $z_{0}$ and $S_{1}$ play the role of $0$ and $[0,1]$, respectively. Let now $N\geq 1, S_{N}= \bigcup\limits_{k=0}^{N-1} I_{k,N}$, each  $I_{k,N}$ is an arc of $\mathbb{R}^{2}$, where $z_{0}$ is one of its endpoints, we denote by $z_{k,N}$ the other endpoint of $I_{k,N}$ distinct from $z_{0}$. Let $h_{N}: S_{N}\longrightarrow S_{N}$ be the homeomorphism of $S_{N}$ defined as $h_{N}(I_{k,N})= I_{k+1,N}$ and $h_{N}(I_{N-1,N})=I_{0,N}$ in an affine way. We let $f_{N} = h_{N}\circ f_{1}$. Clearly $f_{N}: S_{N}\longrightarrow S_{N}$ is monotone and continuous. Observe that $\textrm{End}(S_{N}) = \{z_{k,N}:\; 0\leq k\leq N-1\}$ is a periodic orbit of period $N$, $s\alpha_{f_{N}}(z_{0}) = \{z_{0},z_{k,N}:\; 0\leq k\leq N-1\}$ and $\alpha_{f_{N}}(z_{0})=S_{N}$.
We may pick $(S_{N})_{N\geq 1}$ in such a way that $S_{\infty}=\displaystyle\bigcup_{n\geq 1}S_{N}$ is an infinite star centered at $z_{0}$. Define the map $f_{\infty}: S_{\infty}\to S_{\infty}$ given by its restriction $(f_{\infty})_{| S_{N}} =f_{N}$. Clearly $f_{\infty}$ is monotone and continuous. Moreover $\alpha_{f_{\infty}}(z_{0})=S_{\infty}$ and $s\alpha_{f_{\infty}}(z_{0})=\{z_{0},z_{k,N}:\; N\geq 1,\; 0\leq k\leq N-1\}$ is totally periodic which contains periodic orbits of period $N$, for any $N\geq 1$.}
\end{exe}
\newpage
\begin{figure}[!h]
	\centering
	\includegraphics[width=10cm, height=6cm]{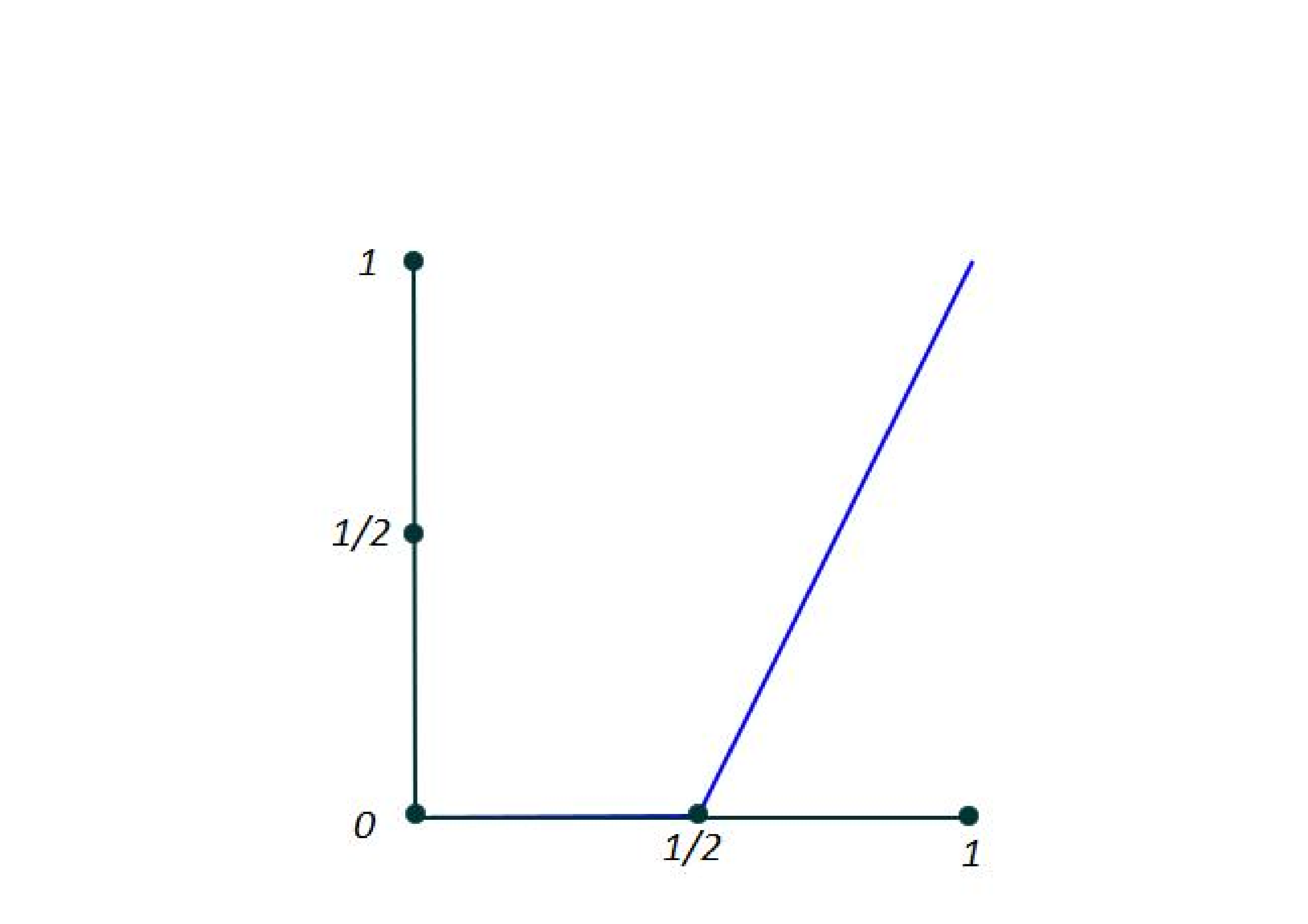}
	\caption{The map $g$}
\end{figure}
\begin{figure}[!h]
	\centering
	\includegraphics[width=20cm, height=11cm]{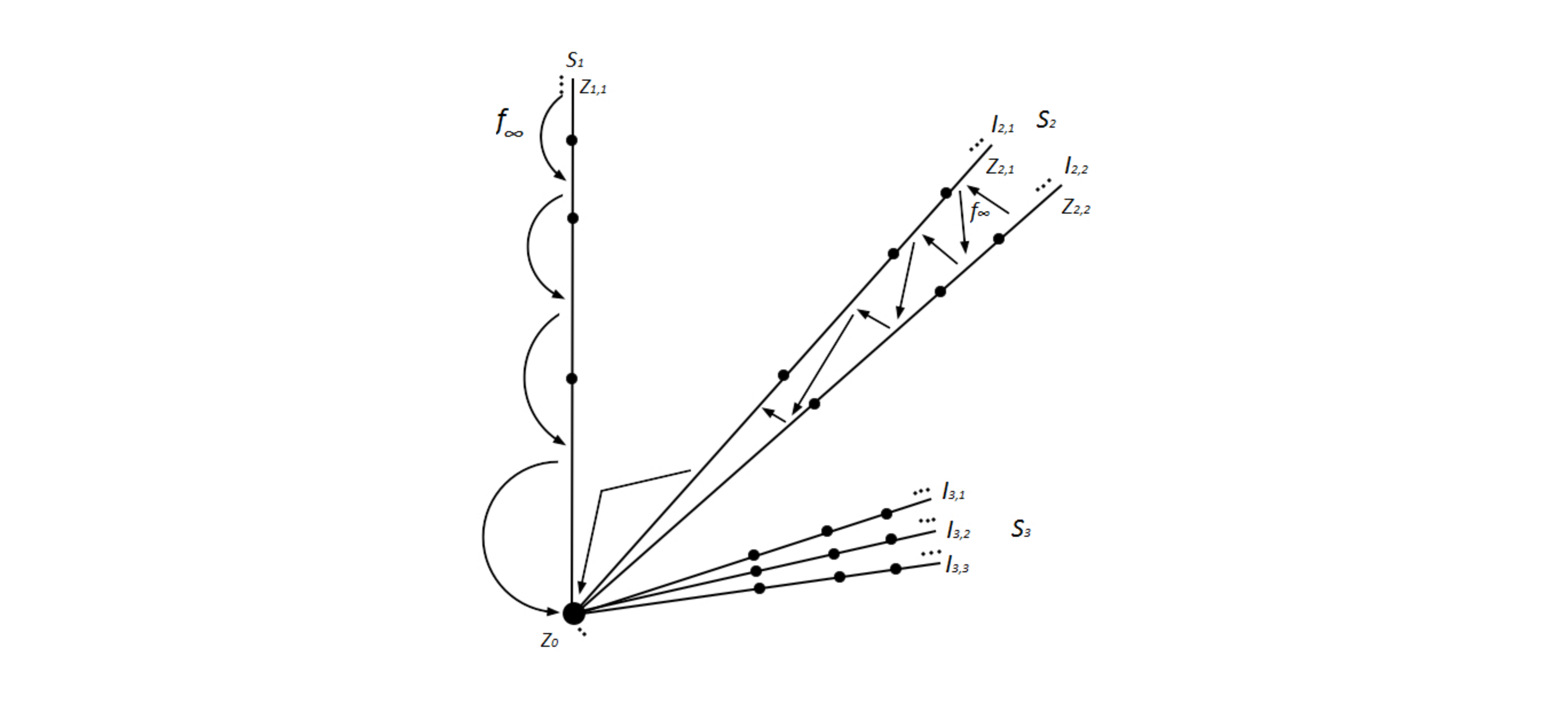}
	\caption{The map $f_{\infty}$}
\end{figure}
\newpage
	
\textit{Acknowledgements.}
This work was supported by the research unit: ``Dynamical systems and their applications'', (UR17ES21), Ministry of
Higher Education and Scientific Research, Faculty of Science of Bizerte, Bizerte, Tunisia.

\bibliographystyle{amsplain}
\medskip

\end{document}